\theoremstyle{plain}
\newtheorem{theorem}{Theorem}[section]
\newtheorem{corollary}[theorem]{Corollary}
\newtheorem{proposition}[theorem]{Proposition}
\newtheorem{lemma}[theorem]{Lemma}
\theoremstyle{definition}
\newtheorem{definition}[theorem]{Definition}
\newtheorem{remark}[theorem]{Remark}
\numberwithin{equation}{section}
\newcommand*{\dif}{\mathop{}\!\mathrm{d}}
\newcommand{\trace}{\mathop{\mathrm{tr}}}
\newcommand{\R}{\mathbb{R}}
\newcommand{\T}{\mathbb{T}}
\newcommand{\N}{\mathbb{N}}
\newcommand{\RO}{\mathcal{R}}
\newcommand{\UO}{\mathcal{U}}
\def \Z {{\mathbb{Z}}}
\def \H {\mathcal{H}}
\def \E {\mathcal{E}}
\def \EE {\mathscr{E}}
\def \L {\mathscr{L}}
\def \LFP {\mathscr{L}_{\scriptscriptstyle{\rm FP}}}
\def \lfp {\mathscr{L}_{\scriptscriptstyle{\rm OU}}}
\def \dk {{\rm deg}_{\rm kin}}
\def \Ck {{\mathcal C}_l}
\def \Ckin {{\mathcal C}_{\rm kin}}
\def \rfp {\mathscr{R}}
\def \fin {f_{\rm in}}
\def \hin {h_{\rm in}}
\def \hine {h_{\e,\rm in}}
\def \K {\mathcal{K}}
\def \A {\mathcal{A}}
\def \Y {\mathcal{Y}}
\def \gg {\tilde{g}}
\def \p{\partial}
\def \trace {{\text{\rm tr}}}
\def \o {{\omega}}
\def \a {{\alpha}}
\def \b {{\beta}}
\def \g {{\gamma}}
\def \d {{\delta}}
\def \e {{\epsilon}}
\def \r {{\rho}}
\def \s {{\sigma}}
\def \t {{\tau}}
\def \m {{\mu}}
\def \n {{\nu}}
\def \x {{\xi}}
\def \y {{\eta}}
\def \w {{\omega}}
\def \G {{\Gamma}}
\def \O {{\Omega}}
\newcommand{\la}{\langle}
\newcommand{\ra}{\rangle}
\begin{document}
	\title[On the nonlinear kinetic Fokker-Planck equation]{On a spatially inhomogeneous nonlinear Fokker-Planck equation: Cauchy problem and diffusion asymptotics}	
	\date{\today}
	\author{Francesca Anceschi}
	\address{Dipartimento di Ingegneria Industriale e Scienze Matematiche, Università Politecnica delle Marche,
		       Via Brecce Bianche 12, 60131 Ancona, Italy}
	\email{f.anceschi@staff.univpm.it}
	\author{Yuzhe Zhu}
	\address{D\'epartement de math\'ematiques et applications, \'Ecole normale sup\'erieure, 45 Rue d'Ulm, 75005 Paris, France}
	\email{yuzhe.zhu@ens.fr}
	
	
\begin{abstract}
We investigate the Cauchy problem and the diffusion asymptotics for a spatially inhomogeneous kinetic model associated to a nonlinear Fokker-Planck operator. 
We derive the global well-posedness result with instantaneous
smoothness effect, when the initial data lies below a Maxwellian. 
The proof relies on the hypoelliptic analog of classical parabolic theory, as well as a positivity-spreading result based on the Harnack inequality and barrier function methods. 
Moreover, the scaled equation leads to the fast diffusion flow under the low field limit. 
The relative phi-entropy method enables us to see the connection between the overdamped dynamics of the nonlinearly coupled kinetic model and the correlated fast diffusion. 
The global in time quantitative diffusion asymptotics is then derived by combining entropic hypocoercivity, relative phi-entropy and barrier function methods. 
\end{abstract}
\maketitle

\hypersetup{bookmarksdepth=2}
\setcounter{tocdepth}{1}
\tableofcontents
	
\section{Introduction}\label{intro}
We consider the kinetic Fokker-Planck operator $\LFP:=\nabla_v\cdot\left(\nabla_{v}+v\right)$ and the following spatially inhomogeneous nonlinear drift-diffusion model, 
\begin{equation}\label{nk}
\left\{ 
\begin{aligned}
\ &\left(\p_t+v\cdot\nabla_x\right)f(t,x,v)=\rho^\b_f(t,x)\,\LFP f(t,x,v),  \\
\ &\;f(0,x,v)=\fin (x,v), \\
\end{aligned}
\right. 
\end{equation} 
for an unknown $f(t,x,v)\ge0$ with $t\in\R_+$, $x\in\T^d$ or $\R^d$, $v\in\R^d$, where $\T^d$ denotes the $d$-dimensional torus with unit volume, the constant $
\b\in[0,1]$ and 
\begin{equation*}
\rho_f(t,x) := \int_{\R^{d}} f(t,x,v) \dif v. 
\end{equation*}
Given a constant $\e\in(0,1)$, the equation under the low field scaling $t\mapsto\e^2 t$, $x\mapsto\e x$ reads 
\begin{equation}\label{epfnk}
\left\{ 
\begin{aligned}
\ &\left(\e\p_t+v\cdot\nabla_x\right)f_\e(t,x,v)=\frac{1}{\e}\rho^\b_{f_\e}(t,x)\,\LFP f_\e(t,x,v),  \\
\ &\;f_\e(0,x,v) = f_{\e,\rm in} (x,v). \\
\end{aligned}
\right. 
\end{equation} 
Our aim is to show the global well-posedness and the trend to equilibrium with smoothness a priori estimates for the equation~\eqref{nk}, and the quantitative asymptotic dynamics of the equation~\eqref{epfnk} as $\e$ tends to zero. 

\subsection{Main results}
Let us recall that a classical solution of an evolution equation is a nonnegative function verifying the equation pointwise everywhere and 
matching the initial data continuously.
Unless otherwise specified, any solution we consider below is intended in the classical sense. 
For $k\in\N$, $\mathcal{C}^k(\O)$ is the set of functions having all derivatives of order less than or equal to $k$ continuous in the domain $\O$. 
For $\a\in(0,1)$, $\mathcal{C}^\a(\O)$ is the classical H\"older space on $\O$ with exponent $\a$. 
Besides, we write the measure $\dif m:=\dif x\dif\m$, where
\begin{equation*}
 \m(v):=(2\pi)^{-\frac{d}{2}}e^{-\frac{|v|^2}{2}}  {\quad\rm and\quad}   
\dif\m:=\m\dif v 
\end{equation*}
denote the Gaussian function and the Gaussian measure, respectively. 
A function takes the form of $C\m(v)$ for some constant $C>0$ is called a Maxwellian.

\begin{theorem}\label{wellpose}
Let the space domain $\O_x=\T^d$ or $\R^d$, and the constants $0<\lambda<\Lambda$ be given. 
\begin{item}
\item [\bf {\ \,(i)}] 
If $\fin\in{\mathcal C}^0(\O_x\times\R^d)$ satisfies $0\le\fin \le \Lambda \m$ in $\O_x\times\R^d$, then there exists a solution $f$ to the Cauchy problem~\eqref{nk} such that $0\le f \le \Lambda \m$ in $\R_+\times\O_x\times\R^d$. 
Moreover, for any $\n\in(0,1)$, $k\in\N$, and any compact subset $K\subset(0,T]\times\O_x$, there is some constant $C_{T,\n,k,K}>0$ depending only on $d,\b,\lambda,\Lambda,T,\n,k,K$ and the initial data such that
\begin{equation*}\label{golbalregu}
\|\m^{-\n} f\|_{{\mathcal C}^{k}(K\times\R^d)} \le C_{T,\n,k,K}.  
\end{equation*}
Additionally, if $\fin$ is H\"older continuous and $\rho_{\fin}\ge\lambda$ in $\O_x$, then the solution that lies below any Maxwellian is unique. 
			
\medskip
\item [\bf {\;(ii)}]
For $\O_x=\T^d$, if the initial data satisfies $\lambda\m\le\fin\le \Lambda\m$ in $\T^d\times\R^d$, 
then for any $k\in\N$, there exists some constant $c>0$ depending only on $d,\b,\lambda,\Lambda$ and some constant $C_k>0$ depending additionally on $k$ such that for any $t\ge1$,
\begin{equation*}\label{longtimethm}
\left\|\frac{f-\m\int \fin\dif x\dif v}{\sqrt{\m}}\right\|_{{\mathcal C}^k(\mathbb{T}^{d}\times\R^d)} 
\le C_k e^{-ct}. 
\end{equation*}
For $\O_x=\R^d$, if the initial data satisfies $\lambda\m\le\fin\le \Lambda\m$ in $\R^d\times\R^d$ and $\fin-M_1\m\in L^1(\R^d\times\R^d)$ for some constant $M_1>0$, then there is some constant $C'>0$ depending only on $d,\b,\lambda,\Lambda,M_1$ such that
\begin{equation*}
\left\|\frac{f-M_1\m}{\sqrt{\m}}\right\|_{L^2(\R^d\times\R^d)}
\le C'\left(1+\|\fin-M_1\m\|_{L^1(\R^d\times\R^d)}\right) t^{-\frac{d}{4}}. 
\end{equation*}
\end{item}
\end{theorem}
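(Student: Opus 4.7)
For Part (i), the plan is to reduce the nonlinear Cauchy problem to linear hypoelliptic parabolic theory via a Schauder-type fixed point. Freezing the nonlinearity by fixing $g$ with $0\le g\le\Lambda\m$, I would solve the linear problem $(\p_t+v\cdot\nabla_x)f=\rho_g^\b\,\LFP f$ with $f|_{t=0}=\fin$. Since $\LFP\m=0$, the Maxwellian $\Lambda\m$ is a stationary supersolution regardless of $\rho_g$, so the maximum principle yields $0\le f\le\Lambda\m$; in particular $\rho_g^\b$ is bounded from above by a constant depending only on $d,\b,\Lambda$. H\"older regularity for kinetic Fokker-Planck operators of De Giorgi-Nash-Moser type makes the map $g\mapsto f$ compact on a suitable closed convex subset of bounded continuous functions, so Schauder's theorem produces a solution $f$. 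For the $\mathcal{C}^k$ bound on $\m^{-\n}f$, I would write the equation satisfied by $h=\m^{-\n}f$ and bootstrap hypoelliptic Schauder-type estimates on interior kinetic cylinders, using $\n<1$ to absorb the Maxwellian factor that shows up in the drift.

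For uniqueness, and for Part (ii), the decisive new input is a uniform positive lower bound on $\rho_f$. Combining the weak Harnack inequality for kinetic hypoelliptic operators with a barrier function tailored to the Maxwellian decay in $v$, the initial assumption $\rho_{\fin}\ge\lambda$ should spread to give $\rho_f(t,x)\ge c>0$ uniformly in time, with $c$ depending only on $d,\b,\lambda,\Lambda$. With $\rho_f^\b$ pinched between two positive constants, the operator $\rho_f^\b\,\LFP$ is uniformly hypoelliptic in $v$, and uniqueness follows from an $L^2(\m^{-1})$-energy/Gr\"onwall estimate on the difference of two solutions sharing the same initial data. For Part (ii) on $\T^d$, the two-sided bound on $\rho_f^\b$ turns \eqref{nk} into a linear kinetic Fokker-Planck equation with bounded variable coefficient, and I would invoke entropic hypocoercivity (a modified $H^1$ Lyapunov functional in the spirit of Villani and Dolbeault-Mouhot-Schmeiser) to obtain exponential $L^2(\m^{-1})$ decay of $f-\m\int\fin\dif x\dif v$ toward equilibrium; the $\mathcal{C}^k$ rate then follows by feeding the exponential $L^2$ decay back into the regularity estimate of Part (i). For $\O_x=\R^d$ there is no spectral gap, but combining the $L^1$ control $\|\fin-M_1\m\|_{L^1}$ with the same hypocoercive dissipation and a Nash-type interpolation produces the diffusive rate $t^{-\frac{d}{4}}$, which mirrors the heat-kernel smoothing $\|e^{t\Delta}\rho_0\|_{L^2}\lesssim t^{-\frac{d}{4}}\|\rho_0\|_{L^1}$ of the underlying macroscopic dynamics.

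The principal technical obstacle is the positivity-spreading step that underpins both uniqueness and Part (ii). Classical kinetic Harnack inequalities propagate positivity locally in $(t,x,v)$, but what is actually needed is a quantitative lower bound on $\rho_f(t,x)=\int f\dif v$, uniform in time, which forces one to spread positivity to a fixed-measure set in $v$ at each $(t,x)$. Designing a kinetic subsolution barrier that is compatible with the Maxwellian decay in $v$ and that interacts correctly with the degenerate transport drift $v\cdot\nabla_x$ — so that Harnack can be iterated over long times without losing the constant $c$ — is the delicate part of the argument, and is precisely the role of the ``Harnack plus barrier'' machinery highlighted in the abstract.
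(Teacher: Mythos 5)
Your high-level architecture matches the paper's: Schauder fixed point with the Maxwellian as a supersolution barrier, barrier functions plus Harnack chains for positivity spreading, Gr\"onwall for uniqueness, macro-micro hypocoercivity, and Nash interpolation for the $t^{-d/4}$ rate on $\R^d$. But the bootstrap and the uniqueness steps conceal genuine gaps. You propose bootstrapping kinetic Schauder estimates directly on interior cylinders; this stalls because $\Ck^{2+\alpha}$ encodes only $\tfrac{2+\alpha}{3}$-H\"older regularity in $x$, so after one Schauder step you cannot differentiate the equation in $x$ to iterate. The paper instead estimates spatial increments $\delta_y g(z)=g(z\circ(0,y,0))-g(z)$ and upgrades an $|y|^{2/3}$ modulus to $|y|$ through an interpolation iteration (Lemmas~\ref{mio-lemma} and~\ref{incrementi-IS}); that device, not plain Schauder, is what recovers a full $x$-derivative. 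For uniqueness, your $L^2(\m^{-1})$-Gr\"onwall plan faces a quantitative obstruction you do not address: the natural integrand involves $\|D_v^2 g_1(t)\|_{L^\infty}$, which the interior Schauder estimate only controls by $t^{-1}$ near the initial time, and $t^{-1}$ is not integrable. The H\"older continuity hypothesis on $\fin$ is used precisely to repair this: Lemma~\ref{boundschauder}(ii) gives a H\"older bound up to $t=0$ that, after rescaling, improves the singularity to $t^{-1+\alpha_*/8}$, which is integrable. Your proposal lists the H\"older hypothesis but never connects it to the argument, so as written the Gr\"onwall step does not close.

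For Part~(ii) you invoke an $H^1$-entropic hypocoercivity in the spirit of Villani, which the paper explicitly avoids: a modified $H^1$ functional would force you to differentiate $\rho_f^{\b}$ in $x$, producing nonlinear commutator terms the method cannot absorb. The paper uses $L^2$-hypocoercivity with a macro-micro (fluid--kinetic) decomposition and an auxiliary Poisson field $-\Delta_x u=\la h\ra-M_0$, which only needs a two-sided pointwise bound on $\rho_f^{\b}$ and never a derivative of it; note also that Dolbeault--Mouhot--Schmeiser is itself an $L^2$ scheme, not an $H^1$ one, so your citation is mislabeled. Separately, your claim that the Harnack chain promotes $\rho_{\fin}\ge\lambda$ to a time-uniform bound $\rho_f\ge c(d,\b,\lambda,\Lambda)$ is not correct: the Harnack-chain constants degrade as $t\to\infty$, and Proposition~\ref{lower} only yields lower bounds on compact time intervals. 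Part~(ii) therefore assumes the stronger pointwise hypothesis $\fin\ge\lambda\m$, which propagates by the elementary $L^2$-energy argument of Lemma~\ref{Gaussian} to $f\ge\lambda\m$ for all $t$; it is this, not Harnack spreading, that supplies the time-uniform lower bound the hypocoercivity estimate requires.
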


\begin{remark}\label{wellposer}
For general measurable initial data $f_{\rm in}$, if it satisfies $f_{\rm in} \le \Lambda \mu$ and an extra locally uniform lower bound assumption (see \eqref{lowerdd} below for a precise description), the existence of solutions still holds in some weak sense, as pointed out in Remark~\ref{remark-weak} below. 
\end{remark}

In order to describe the diffusion asymptotics of the equation~\eqref{epfnk}, we introduce the (Bregman) distance characterized by the relative phi-entropy functional $\H_\b$. 
\begin{definition}\label{phi-entropy}
Let $\b\in[0,1]$. For any measurable functions $h_1\ge0$ and $h_2>0$ defined in $\T^d\times\R^d$, the \emph{relative phi-entropy} of $h_1$ with respect to $h_2$ is defined by
\begin{align*}
\H_\b(h_1|h_2):=&
\int_{\T^d\times\R^d}\left( \varphi_\b(h_1)-\varphi_\b(h_2)-\varphi'_\b(h_2)(h_1-h_2) \right)\dif m, 
\end{align*}
where $\varphi_\b:\R_+\rightarrow\R$ is defined by $\varphi_\b(z):=\frac{1}{1-\b}\left(z^{2-\b}-(2-\b)z+1-\b\right)$ for $\b\in[0,1)$ and $\varphi_1(z):=z\log z-z+1$. 
\end{definition}
	
\begin{theorem}\label{limit}
Let the constants $\a_0\in(0,1)$, $0<\lambda<\Lambda$ be given and consider a sequence of functions  $\left\{f_{\e,\rm in}\right\}_{\e\in(0,1)}\subset 
{\mathcal C}^{\a_0}(\T^d\times\R^d)$ satisfying $0\le f_{\e,\rm in} \le \Lambda\m$ in $\T^d\times\R^d$ and $\r_{f_{\e,\rm in}}\ge\lambda$ in $\T^d$. 
Let $f_\e$ be the solution to \eqref{epfnk} associated with the initial data $f_{\e, \rm in}$. 
\begin{item}
\item [\bf {\ \,(i)}]
If there exists some constant $\e'\in(0,1)$ and some function $\rho_{\rm in}\in {\mathcal C}^{\a_0}(\T^d)$ valued in $[\lambda,\Lambda]$ such that
\begin{equation*}
	\H_\b\left(\,\m^{-1}f_{\e,\rm in}\,|\,\rho_{\rm in}\,\right)\le \e', 
\end{equation*}
then there exist some constants $M,m>0$ depending only on $d,\b,\lambda,\Lambda,\a_0, \left\|\r_{\rm in}\right\|_{{\mathcal C}^{\a_0}(\T^d)}$ and $\left\|f_{\e,\rm in}\right\|_{{\mathcal C}^{\a_0}(\T^d\times\R^d)}$ such that for any $T>0$, 
\begin{equation*}\label{limit1}
\left\|\m^{-1}f_\e-\rho\right\|_{L^\infty\left([0,T];\,L^2\left(\T^d\times\R^d,\,\dif m\right)\right)}
\le Me^{MT}(\e+\e')^{m},  
\end{equation*}
where $\rho(t,x)$ for $(t,x)\in\R_+\times\T^{d}$ is the solution to the following fast diffusion equation, 
\begin{equation}\label{limiteq}
\left\{ 
\begin{aligned}
\ &\partial_t\rho(t,x)=\nabla_x\cdot\left(\rho^{-\b}(t,x)\nabla_x\rho(t,x)\right), \\
\ &\;\rho(0,x) = \rho_{\rm in} (x).  \\
\end{aligned}
\right. 
\end{equation}

\medskip
\item [\bf {\;(ii)}]
If we additionally assume that $f_{\e,\rm in}\ge\lambda\m$ in $\T^d\times\R^d$, 
then there exist some constants $M',m'>0$ with the same dependence as $M,m$ such that 
\begin{equation*}\label{limit2}
\left\|\m^{-1}f_\e-\rho\right\|_{L^\infty\left(\R_+;\,L^2\left(\T^d\times\R^d,\,\dif m\right)\right)}
\le M'(\e+\e')^{m'}. 
\end{equation*}
\end{item}
\end{theorem}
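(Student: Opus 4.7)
The plan is to use the relative phi-entropy $\mathcal{E}_\e(t) := \H_\b(h_\e(t)\,|\,\rho(t))$ with $h_\e := \m^{-1}f_\e$ as a Lyapunov-type functional comparing the rescaled kinetic density to the limit fast-diffusion profile. Since $\H_\b(h_1|h_2)$ is equivalent to $\|h_1-h_2\|_{L^2(\dif m)}^2$ whenever both arguments stay in a compact subset of $(0,\infty)$, controlling $\mathcal{E}_\e$ will directly yield the $L^2$-bound in the statement. To set up the uniform estimates, I would first apply Theorem~\ref{wellpose}(i) to $f_\e$ to obtain $0\le f_\e\le\Lambda\m$, interior $\mathcal{C}^k$ regularity, and a positive lower bound $\rho_{f_\e}\ge\lambda_1>0$ coming from the barrier-function positivity-spreading step, all uniform in $\e$; the corresponding upper and lower bounds and smoothness for $\rho$ follow from standard (non-degenerate) fast-diffusion theory.

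Differentiating $\mathcal{E}_\e$ along \eqref{epfnk} and \eqref{limiteq} and integrating by parts in $v$ against the Gaussian and in $x$ on $\T^d$ yields
\begin{align*}
\e\,\tfrac{d}{dt}\mathcal{E}_\e
= -\tfrac{1}{\e}\int\rho_\e^\b\,\varphi_\b''(h_\e)\,|\nabla_v h_\e|^2\dif m
- \int_{\T^d}\varphi_\b''(\rho)\,\nabla_x\rho\cdot j_\e\dif x
- \e\int_{\T^d}\varphi_\b''(\rho)\,\p_t\rho\,(\rho_\e-\rho)\dif x,
\end{align*}
where $j_\e(t,x):=\int v\,h_\e\dif\m$. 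The first term is the $v$-diffusion dissipation (of order $1/\e$). The crucial step is a Chapman--Enskog analysis of the flux: testing \eqref{epfnk} against $v$ produces the moment relation $\e\,\p_t j_\e + \nabla_x\!\cdot\!\int v\otimes v\,h_\e\dif\m = -\e^{-1}\rho_\e^\b j_\e$, whence $j_\e = -\e\rho_\e^{-\b}\nabla_x\rho_\e + \e R_\e$ with a remainder $R_\e$ controlled in $L^2(\T^d)$ by $\|\nabla_v h_\e\|_{L^2(\dif m)}$ and the $\mathcal{C}^k$ bounds from the previous step. Substituting this expression, and rewriting the last term via $\p_t\rho=\nabla_x\!\cdot\!(\rho^{-\b}\nabla_x\rho)$ followed by one integration by parts, the two cross-terms combine into a nonpositive \emph{hydrodynamic} dissipation of the form $-\e\int\varphi_\b''(\rho_\e)\rho_\e^{-\b}|\nabla_x(\rho_\e-\rho)|^2\dif x$ plus terms quadratic in $\rho_\e-\rho$ (absorbable into $\mathcal{E}_\e$) and remainders of size $O(\e^2)$.

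Combined with the Gaussian Poincar\'e inequality $\|h_\e-\rho_\e\|_{L^2(\dif\m)}\le\|\nabla_v h_\e\|_{L^2(\dif\m)}$ and the elementary bound $\|\rho_\e-\rho\|_{L^2(\T^d)}^2\le C\mathcal{E}_\e$, Young's inequality then closes everything into the differential inequality $\tfrac{d}{dt}\mathcal{E}_\e \le C\mathcal{E}_\e + C\e$, whose Grönwall integration produces $\mathcal{E}_\e(T) \le (\e'+C\e)e^{CT}$ and hence part (i) with exponent $m=1/2$. For part (ii), the extra lower bound $f_{\e,\mathrm{in}}\ge\lambda\m$ is propagated by the maximum principle to $f_\e(t)\ge\tilde\lambda\m$ for all $t\ge 0$, so entropic hypocoercivity of Villani or Dolbeault--Mouhot--Schmeiser type applies and gives exponential relaxation $\H_\b(h_\e(t)|M_\e)\to 0$ at a rate uniform in $\e$, where $M_\e := \int h_{\e,\mathrm{in}}\dif m$; analogously $\rho(t)\to\bar\rho:=\int\rho_{\rm in}\dif x$ exponentially. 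Since $|M_\e-\bar\rho|\le C\sqrt{\e'}$ by a Csisz\'ar--Kullback--Pinsker-type inequality applied to the initial closeness hypothesis, combining the finite-horizon bound from (i) with this long-time exponential decay and optimizing the cutoff time $T\sim\log(1/(\e+\e'))$ yields the time-uniform estimate of part (ii).

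The \emph{main obstacle} is closing the combined-dissipation step: the Chapman--Enskog substitution must produce a genuinely sign-definite hydrodynamic dissipation once the nonlinear $\varphi_\b$-weights are factored in, and the flux remainder $R_\e$ must be quantitatively bounded in $L^2(\dif m)$ by interior regularity of $h_\e$. This is where the quantitative $\mathcal{C}^k$ estimates from Theorem~\ref{wellpose}(i), together with the barrier-based positive lower bound on $\rho_{f_\e}$ that keeps the coefficient $\rho_\e^\b$ non-degenerate, play an essential role, and they must be verified to be compatible with the parabolic scaling of the kinetic Fokker--Planck operator under the low-field limit \eqref{epfnk}.
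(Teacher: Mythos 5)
Your high-level strategy — comparing $h_\e=\m^{-1}f_\e$ to the fast-diffusion profile $\rho$ through the relative phi-entropy $\H_\b(h_\e|\rho)$, exploiting the microscopic entropy dissipation of order $\e^{-2}$, closing a Gr\"onwall inequality on $[0,T]$, and for (ii) splitting into a finite horizon plus hypocoercive long-time decay with a logarithmic cutoff — is exactly the skeleton of the paper's argument (Proposition~\ref{finitetime}, Proposition~\ref{longtime}, and the proof of Theorem~\ref{limit}). However there are two concrete gaps, the first of which is fatal as written.

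\textbf{Gap 1: the regularity of $\rho$ degenerates at $t=0$, so your Gr\"onwall inequality does not hold down to the initial time.} You assert the differential inequality $\tfrac{\dif}{\dif t}\mathcal{E}_\e\le C\mathcal{E}_\e+C\e$ and integrate from $t=0$, using $\mathcal{E}_\e(0)\le\e'$. But the constant in front of $\mathcal{E}_\e$ and the $O(\e)$ remainder both involve quantities like $\rho^{-\b}\nabla_x\rho$, $\nabla_x(\rho^{-2\b}\nabla_x\rho)$ and $\p_t(\rho^{-2\b}\nabla_x\rho)$. Since $\rho_{\rm in}$ is merely $\mathcal{C}^{\a_0}$ (not $\mathcal{C}^1$), the parabolic smoothing estimate (Lemma~\ref{porouslemma}) gives $|\nabla_x\rho(t)|\lesssim t^{(\a-1)/2}$ and $|\nabla_x^2\rho(t)|,|\p_t\rho(t)|\lesssim t^{(\a-2)/2}$, all blowing up at $t=0$. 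The paper's dissipation estimate (Proposition~\ref{finitetime}) is therefore only valid on $[\underline{T},1]$, with constants diverging as $\underline{T}\to0$; it cannot be started at $t=0$. To bridge this, the paper adds a completely separate ingredient that your proposal omits: a barrier-function argument comparing $h_\e(t,\cdot,\cdot)$ to $\hine$ on a short time window $t\le \e\d/(4\la R\ra)$ and then to $\rho_{\rm in}$ on $\T^d\times B_R$, followed by an optimization of $\underline{T}\sim\e^{2+2\eta}$, $R\sim\e^{-\eta/4}$, $\d\sim\e^{\eta/4}$. Without this, your estimate simply doesn't close. (Note also that the positivity-spreading barriers you cite from Theorem~\ref{wellpose}(i) serve a different purpose — keeping $\rho_{f_\e}$ bounded away from $0$ — not continuity at initial time.)

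\textbf{Gap 2: the treatment of the flux remainder and the choice of hypocoercivity.} You propose bounding the Chapman--Enskog remainder $R_\e$ in $L^2$ by $\|\nabla_v h_\e\|_{L^2(\dif m)}$ together with uniform-in-$\e$ $\mathcal{C}^k$ bounds on $h_\e$. But the $\mathcal{C}^k$ estimates of Theorem~\ref{wellpose}(i) are for the unscaled equation; under the low-field scaling they are \emph{not} uniform in $\e$ for $t$- and $x$-derivatives. The paper avoids this by never differentiating $h_\e$ in $t$ or $x$: in $R_\e=-\nabla_x\cdot\la v\otimes\nabla_v h_\e\ra-\e\p_t\la vh_\e\ra$ the derivatives are transferred onto $\rho^{-2\b}\nabla_x\rho$ by integration by parts (in $x$ and in $t$), so only the regularity of the \emph{target} $\rho$ is used, together with $\int_0^t\|\la vh_\e\ra\|_{L^2_x}^2$ being controlled by the entropy production of $\H_\b(h_\e|1)$. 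Your version also seeks a sign-definite ``hydrodynamic dissipation'' from the cross-terms; the paper does not claim such a structure — it only applies Cauchy--Schwarz and absorbs into the dissipation, which is simpler and sufficient. Finally, for part (ii), invoking Villani-type $H^1$-entropic hypocoercivity is problematic here: as the paper remarks, the nonlinearity $\rho_{f_\e}^\b$ creates difficulties at the $H^1$ level, which is precisely why the paper sticks to the $L^2$ macro-micro (DMS-style) hypocoercivity of Proposition~\ref{longtime}. Your mention of DMS is fine, but you should commit to that route.
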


\subsection{Strategy and background}\label{background}

\subsubsection{Cauchy problem of the nonlinear model}
The study of the well-posedness of the nonlinear model~\eqref{nk} was firstly addressed
in \cite{IM} mixing H\"older and Sobolev spaces in the torus, and in \cite{LWYang} under the regime of perturbation to the 
global equilibrium in the whole space. 
We develop it with rough initial data by means of the combination of hypoelliptic analog of the parabolic theory with a positivity-spreading result; 
in particular, the technique we employ allows us to drop the smallness and lower bound assumptions as asserted in Theorem~\ref{wellpose}. 
Besides, the global behavior of solutions to \eqref{nk} is derived under the assumption of upper and lower bounds on the initial data only. 

When the drift-diffusion coefficient $\rho^\b_f$ in \eqref{nk} is proportional to the local mass of the solution, that is when $\b=1$, the equations \eqref{nk}, \eqref{epfnk} have the same quadratic homogeneity as the Landau equation, but simpler global bounds and conservation laws. 
Due to the complex structure of the Landau equation, most of the existing results for its classical solutions are about the global theory under the near Maxwellian equilibrium regime \cite{Guo,KGH}, and about the local well-posedness associated with low-regularity and non-perturbative initial data \cite{HST2,HST1}. 
By contrast, the boundedness from above and from below by Maxwellians of the initial data will be preserved along 
time for the solutions to \eqref{nk}, \eqref{epfnk}, 
and the lack of conservation of momentum and energy of \eqref{epfnk} reduces its hydrodynamic limit to the fast diffusion 
flow~\eqref{limiteq} rather than the Navier-Stokes dynamics of scaling limit of the Landau equation, 
which makes its Cauchy problem and global behavior more tractable in a very general setting.

The method we propose to address the nonlinear Cauchy problem with the only requirement of the initial data lying below a Maxwellian involves several ingredients. 
First of all, in Section~\ref{linearsection} we carry out a preliminary study on the linear counterpart of \eqref{nk}, that is the Cauchy problem associated to the Kolmogorov operator, 
\begin{equation}\label{k}
\L_1:=\partial_t+v\cdot\nabla_x-\trace\left(A(t,x,v)D_v^{2}\cdot\right)+B(t,x,v)\cdot\nabla_v,
\end{equation} 
where the coefficients including the entries of the positive definite $d\times d$ real symmetric matrix $A$ and the 
$d$-dimensional vector $B$ are H\"older continuous ($B$ is not necessarily bounded over $v\in\R^d$). 
Even if the well-posedness theory for the Cauchy problem associated to the linear operator~\eqref{k} was already 
well-developed in some sense in the existing literature (see \cite{M}, as well as the survey paper \cite{APsurvey} and the references therein), 
the H\"older spaces (see Definition~\ref{kineticholder}) considered in these works are instead different from the one studied in \cite{ISkinetic,IM} (see Definition~\ref{holdercontinuous}) which we will use. 
Indeed, in contrast to \cite{IM}, the (Schauder type) apriori estimates proved in the previous literature are weaker and not appropriate to bootstrap higher regularity for nonlinear problems (see Subsection~\ref{smoothsection}).

Secondly, the treatment of the existence issue for \eqref{nk} in H\"older spaces is based on a fixed point argument, where the compactness is provided by hypoelliptic regularization results; see Subsection~\ref{wellsection}. 
A breakthrough on such a priori estimates for spatially inhomogeneous kinetic equations with a quasilinear diffusive structure in velocity was obtained in the works \cite{GIMV} and \cite{HS,IM}, where the authors proved the kinetic (hypoelliptic) counterparts of the De Giorgi-Nash-Moser theory and the Schauder theory for classical elliptic equations (see for instance \cite{GilbargTrudinger}), respectively. One may refer to \cite{Mouhot} for a summary. 
Armed with the Schauder estimate developed in \cite{IM} in kinetic H\"older spaces and the bootstrap procedure developed in \cite{IS} adapted to our case, we are then able to derive instantaneous ${\mathcal C}^\infty$ regularization for the solutions to the equation~\eqref{nk} in Subsection~\ref{smoothsection}, provided that the solution is bounded from above and bounded away from vacuum, which guarantees the ellipticity in the velocity variable for \eqref{nk}.

Thirdly, in order to remove the lower bound assumption on the initial data, in Subsection~\ref{lowersection} we establish a self-generating lower bound result showing that the positivity of solutions spreads everywhere instantaneously. 
Its proof is based on repeated applications of the spreading of positivity forward in time (see Lemma~\ref{lowerlemma1}) and the spreading for all velocities (see Lemma~\ref{lowerlemma2}), as proposed in \cite{HST3}. 
On the one hand, the barrier function argument will be used in the same spirit as \cite{HST3} to show the former one. 
Indeed, a \emph{lower (resp.\! upper) barrier} for a certain equation is a  subsolution (resp.\! supersolution) of the equation which bounds its solution from below (resp.\! above) on the boundary; it then follows from the maximum principle that the barrier function performs as a lower (resp.\! upper) bound of the solution. 
On  the  other  hand, combining the local Harnack inequality obtained in \cite{GIMV} with the construction of a Harnack chain yields the latter one. We remark that the idea of the Harnack chain was firstly used in \cite{Moser} and an example of its application to Kolmogorov equations can be found in \cite{AEP}. 
Essentially, the spreading of positivity can be seen as a lower bound estimate of the fundamental solution, which is thus related to the result in \cite{HST2}, where they applied a probabilistic method. 

A subtle point of the lower bound result lies in the possibilities of the degeneracy of solutions as $t\rightarrow0^+$ or 
$t\rightarrow\infty$, which leads to two delicate issues. 
First, with the same difficulty as mentioned in \cite{HST1}, in order to prove the uniqueness of the Cauchy problem~\eqref{nk}
the nondegeneracy of diffusion up to the initial time is required so that the a priori estimates can be still applicable. 
We remark that, generally speaking, deriving uniqueness of solutions to nonlinear equations in rough spaces is always a classical difficulty, and the presence of a vacuum sometimes gives rise to non-uniqueness phenomenon even for the limiting equation \eqref{limiteq} (see for instance \cite{DK}). 
Under the additional assumptions of absence of vacuum on the local mass and H\"older continuity of initial data, we achieve the uniqueness by using the scaling argument and Grönwall's lemma, since the H\"older estimate around the initial time implies that the integrand in the inequality of Grönwall's type is improved to be integrable with respect to the time variable; see the proof of Theorem~\ref{unique} below for more details. 
Second, we are only able to show the convergence to equilibrium if the drift-diffusion coefficient $\rho^\b_f$ decays slower than $t^{-1}$ as $t\rightarrow\infty$ in Proposition~\ref{longtime}. Therefore, an additional lower Maxwellian bound on the initial data is imposed in Theorem~\ref{wellpose}~(ii) and in Theorem~\ref{limit}~(ii) to ensure the solutions will be away from the vacuum uniformly along time. 
It would be expected that such additional lower bound assumption could be removed, especially when $\b$ is small.

\subsubsection{Long time behavior}
The drift-diffusion operator $\LFP$ acts only on the velocity variable and ceases to be dissipative on its unique steady state $\m$, which also ensures that the null space of $\LFP$ is spanned by $\m$ and the conservation law of mass is satisfied. Consequently, the convergence to equilibrium is to be expected.  
With the help of the global smoothness a priori estimates, we are able to pass from the exponential convergence to equilibrium in $L^2$-framework to the uniform convergence in ${\mathcal C}^\infty$ in Subsection~\ref{longtimesection}, when the spatial domain is compact, that is the periodic box $\T^d$. 
Therein, the $L^2$-convergence is obtained by the $L^2$-hypocoercivity under a macro-micro (fluid-kinetic) decomposition scheme, which suggests to construct some proper entropy (Lyapunov) functional that would provide an equivalent $L^2$-norm for solutions. 
The key ingredient is to control the macroscopic part by means of the microscopic part in view of the decomposition. 
This hypocoercive theory was studied in \cite{EGKM,DMS,He} via different approaches, while their ideas are essentially the same. 
In \cite{EGKM}, the authors intended to develop the nonlinear energy estimate in an $L^2$-$L^\infty$ framework. 
In \cite{DMS} and \cite{He}, the authors studied the $L^2$-hypocoercivity theory in an abstract setting and in the framework of pseudo-differential calculus, respectively.  
Besides, if the spatial domain is $\R^d$, meaning that it is not confining in a compact region, then the convergence rate slows down to an algebraic decay, whose hypocoercive theory was captured by \cite{BDM}. We remark that the $L^2$-framework allows us to avoid some difficulties from the nonlinearity of the operator $\rho^\b_f\LFP f$, in contrast with $H^1$-entropic hypocoercivity methods (see for instance the memoir~\cite{Villanihypo}).

\subsubsection{Diffusion asymptotics}
The diffusion approximation serves as a simplification of collisional kinetic equations when the mean free path is much smaller than the typical length of 
observation in a long time scale. This approximation for linear Fokker-Planck models can be traced back to \cite{Degond}, where the authors applied the Hilbert expansion method. 
One is also able to achieve the diffusion limit for \eqref{epfnk} in some weak sense by applying a similar strategy to the one given in \cite{ELM}. However, weak convergence is sometimes ineffective for application, as precise description of the convergence is not given. 
Still the nonlinearity of the term $\rho^\b_{f_\e}\LFP f_\e$ in \eqref{epfnk} associated with non-perturbative initial data reveals some difficulties to derive a quantitative convergence. 

In order to overcome this difficulty, in Subsection~\ref{finitetimesection} we will rely on the phi-entropy of solutions relative to their limit to see the finite time asymptotics on the torus.
The relative entropy method, that heavily relies on the regularity of solutions to the target equation,  has become an effective tool in the study of hydrodynamic limits since \cite{YauHZ,BGL} (see also \cite{LSR}). 
The method applied to the diffusion asymptotics of the kinetic Fokker-Planck equation of the type with linear diffusion can be found in \cite{Markou}. 
The so-called phi-entropy (relative to the global equilibrium) was used to study the convergence of certain kinds of Fokker-Planck equations; see for instance \cite{AMTU,DL}. 
Finally, combining the barrier function method with a careful treatment of the regularity estimate of the target equation enables us to deal with the asymptotic dynamics for the cases associated with general H\"older continuous initial data.

\subsection{Physical motivation}
The spatially inhomogeneous Fokker-Planck equation~\eqref{nk} arises from modeling the evolution of some system of a large number of interacting particles from the statistical mechanical point of view. 
These models appear for instance in the study of plasma physics and biological dynamics; see \cite{Villani,Cha2}. 
Its solution can be interpreted as the probability density of the particles lying at the position $x$ at time $t$ with velocity $v$. The scaled model \eqref{epfnk} for small $\e$ describes the evolution of the particle density in the small mean free path and long time regime, where the nondimensional parameter $\e\in(0,1)$ designates the ratio between the mean free path (microscopic scale) and the typical macroscopic length. 
The limiting equation~\eqref{limiteq} characterizes its macroscopic dynamics. 

From the perspective of a stochastic process $\{(X_t,V_t):t\ge0\}$ driven by a Brownian motion $\{\mathcal{B}_t:t\ge0\}$, 
\begin{equation*}
\left\{ 
\begin{aligned}
\ &\dif X_t= V_t\dif t, \\   
\ &\dif V_t= \r^\b_f(t,X_t)V_t\dif t+\sqrt{2\r^\b_f(t,X_t)}\dif\mathcal{B}_t, \\
\end{aligned}
\right. 
\end{equation*} 
the dual equation describing the dynamics of $\{(X_t,V_t):t\ge0\}$ is given by \eqref{nk}; see the review paper \cite{Chand}. 
Indeed, the nonlinear term $\rho^\b_f\LFP f$ models the collisional interaction of the particles, where the mobility of these particles is hampered by their aggregation. 
More precisely, the nonlinear dependence on the drift-diffusion coefficient $\rho^\b_f$ translates the fact that the friction effect in the interaction is positively correlated to the local mass of particles occupying the position $x$ at time $t$. 
Moreover, the low field scaling $t\mapsto\e^2 t$, $x\mapsto\e x$ of the equation~\eqref{nk} formally implies the equation~\eqref{epfnk}. As $\e$ tends to zero, its spatial diffusion phenomena are characterized by the equation~\eqref{limiteq}. 

Regarding its physical interpretation, we point out that the factor multiplying the time derivative in \eqref{epfnk} takes into account the long time scale. The inverse of the factor multiplying $\rho^\b_{f_\e}\LFP f_\e$ stands for the scaled average distance traveled by particles between each collision and it is usually referred to as mean free path. 
In the small mean free path regime, it was noticed in \cite{Chand} that the spatial variation occurs significantly only under the long time scale that is consistent with the particle motion. 
In such an overdamped process, named as low field limit or diffusion limit, the kinetic model is reduced to a macroscopic model.

Finally, we recall that the associated phi-entropy introduced in Definition~\ref{phi-entropy} is also known as Tsallis entropy in the physics community, which generalizes the Boltzmann–Gibbs entropy (the  phi-entropy with $\beta=1$) in nonextensive statistical mechanics~\cite{tsallis1}.  
It gives some hints for the formulation of the correlated diffusion, where the index $\beta$ measures the degree of nonextensivity and nonlocality of the system; see \cite{tsallisbook} and the references therein.

\subsection{Organization of the paper}
The article is organized as follows. In Section~\ref{preliminaries}, we recall some basic notions related to kinetic H\"older spaces that are adapted to the Fokker-Planck equations. 
Section~\ref{linearsection} is devoted to the study of the linear Fokker-Planck equation with H\"older continuous coefficients. 
The well-posedness result Theorem~\ref{wellpose}~(i) is proved in Section~\ref{nonlinear}. 
The asymptotic behaviors, including Theorem~\ref{wellpose}~(ii) and Theorem~\ref{limit}, are proved in Section~\ref{diffusionlimit}. 
	
\medskip\noindent\textbf{Acknowledgement.}
The authors are grateful to Cyril Imbert for suggesting the question, and both François Golse and Cyril Imbert for the helpful discussions, and the anonymous referees for their careful reading and comments.
\includegraphics[height=0.275cm]{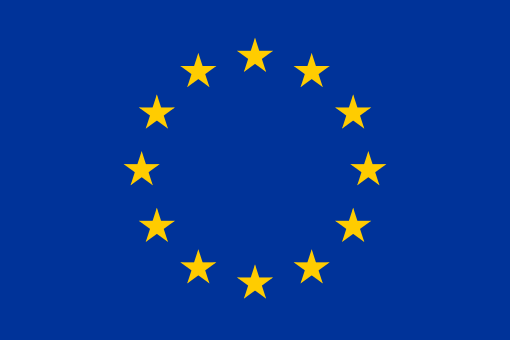} 
YZ's research has received funding from the European Union’s Horizon 2020 research and innovation programme under the Marie Skłodowska-Curie grant agreement No 754362.

\section{Preliminaries}\label{preliminaries}
This section is devoted to the basic notations, including the invariant structure and the kinetic H\"older space for the equations we are concerned with. 
Instead of the usual parabolic scaling and translations, the  invariant scaling and transformation associated with the Kolmogorov operator $\L_1$ (see \eqref{k}) is replaced by kinetic scaling and Galilean transformations, respectively. 
It then turns out that the appropriate H\"older space as well as its norm should be adapted to the new scaling and transformation. 
Let us now precisely state these notions.

\subsection{The geometry associated to Kolmogorov operators}
Let $z:=(t,x,v)\in\R\times\R^d\times\R^d$. 
We define the \emph{kinetic scaling}, 
\begin{equation*}
S_r(t,x,v) := (r^2 t,\, r^{3} x,\, r v), {\quad\rm for\ } r>0,  
\end{equation*}
and the \emph{Galilean transformation}, 
\begin{equation*}
(t_0,x_0,v_0)\circ(t,x,v):=(t_0+t,\,x_0+x+tv_0,\,v_0+v), 
	{\quad\rm for\ } (t_0,x_0,v_0)\in\R\times\R^d\times\R^d.
\end{equation*}
With respect to the product $\circ$, we are able to define the inverse of $z$ as given by $z^{-1}:=(-t,-x+tv,-v)$. 
In view of this structure of scaling and transformation, it is natural to define the cylinder centered at the origin of
radius $r>0$ as 
$$Q_r:= (-r^2, 0] \times B_{r^3}(0) \times B_r(0).$$
More generally, the cylinder  centered at $z_0=(t_0,x_0,v_0)$ with radius $r$ is defined by 
\begin{align*}
Q_r(z_{0}):=&\left\{ z_{0} \circ S_r(z): \, z \in Q_1\right\}\\
=&\left\{ (t,x,v): \, t_{0} - r^{2} < t \le t_{0}, \, |x - x_{0} - (t - t_{0})v_{0}| < r^{3},\, |v - v_{0}| < r\right\}.
\end{align*}
Roughly speaking, for fixed $z_0\in\R^{1+2d}$, the Kolmogorov operator $\L_1$ is invariant under the kinetic scaling and left invariant under the Galilean transformation. 
It means that if $f$ is a solution to the equation $\L_1f=0$ in $Q_{r}(z_{0})$, then $f\left(z_0\circ S_r(\cdot)\right)$ solves an equation of the same ellipticity class in $Q_1$. 

Besides, the associated quasi-norm $\|\cdot\|$ is defined by 
\begin{align*}
\|z\|:=\max\Big\{ |t|^{\tfrac12}, |x|^{\tfrac13}, |v| \Big\}, 
\end{align*}   
as we notice that $\|S_r(z)\|=r\|z\|$ and $\|z_0\circ z\|\le3\left(\|z_0\|+\|z\|\right)$. 
For further information on the non-Euclidean geometry associated to Kolmogorov operators, one may refer to \cite{APsurvey, ISkinetic} and the references therein. 

\subsection{Kinetic H\"older spaces and differential operators}
The proper kinetic H\"older space and kinetic degree of basic differential operators should be adapted to the above definitions, so that they are homogeneous under these transformations. Their definitions were introduced in \cite{ISkinetic} (see also \cite{IM}), and recalled below. 

Given a monomial $m (t,x,v) =t^{k_0} x_1^{k_1} \ldots x_d^{k_d} v_{1}^{k_{d+1}} \ldots v_d^{k_{2d}}$, we define its \emph{kinetic degree} as 
\begin{equation*}
\dk(m)= 2 k_{0} + 3 \sum\nolimits_{j=1}^{d} k_{j} + \sum\nolimits_{j=d+1}^{2d} k_{j}.
\end{equation*}
Any polynomial $p\in\R[t,x,v]$ can be uniquely written as a linear combination of monomials and its kinetic degree $\dk(p)$ is defined by the maximal kinetic degree of the monomials appearing in $p$. 
This definition is justified by the fact that $p(S_r(z)) = r^{\dk(p)} p(z)$.

\begin{definition}\label{holdercontinuous}
Let the constant $\a>0$ and the open subset $\O\subset\R\times\R^d\times\R^d$ be given. We say a function
$f : \O \rightarrow \R$ is $\Ck^\a$-continuous at a point $z_0\in\O$, 
if there exists a polynomial $p_{0}\in \R[t,x,v]$ with $\dk (p_{0})<\a$ and a constant $C>0$ such that, for any $z\in\O$ with $z_0\circ z\in\O$,  
\begin{equation}\label{holdercontinuousineq}
| f(z_0\circ z) - p_{0}(z) | \le C \| z\|^\a. 
\end{equation}
If this property holds for any $z_0,z$ on each compact subset of $\O$, then we say $f \in \Ck^{\a}(\O)$. 
If the constant $C$ in \eqref{holdercontinuousineq} is uniformly bounded for any $z_0,z\in\O$, 
we define the smallest value of $C$ as the semi-norm $[f]_{\Ck^{\a}(\O)}$, 
and the norm $\|f\|_{\Ck^{\a}(\O)}:=[f]_{\Ck^0(\O)}+[f]_{\Ck^{\a}(\O)}$, where we additionally define $\Ck^0(\O):={\mathcal C}^0(\O)$, the space of continuous functions on $\O$, with the norm $\|f\|_{\Ck^0(\O)}:=[f]_{\Ck^0(\O)}:=\|f\|_{{\mathcal C}^0(\O)}=\|f\|_{L^\infty(\O)}$. 
\end{definition}
\begin{remark}
For $\a\in[0,1)$, this $\Ck^\a$-continuity is equivalent to the standard definition of 
${\mathcal C}^\a$-continuity with respect to the distance $\|\cdot\|$.
The subscript ``\textit{l}'' of $\Ck$ stems from the definition of H\"older continuity above, that it is given in terms of a  left-invariant distance with respect the group structure of $\circ$. 
\end{remark}

We also mention another kind of H\"older space suitable for the study of Kolmogorov operators that was first used in \cite{M}. 
\begin{definition}\label{kineticholder}
Let $\a\in[0,1)$ and $\O\subset\R\times\R^d\times\R^d$ be given. 
The space $\Ckin^{2+\a}(\O)$ consists of functions $f\in\Ck^0(\O)$ such that $D^2_vf$, $(\p_t+v\cdot\nabla_x)f\in\Ck^\a(\O)$, equipped with the norm 
 \begin{equation*}
 \|f\|_{\Ckin^{2+\a}(\O)}:=\|f\|_{\Ck^0(\O)}+\|D^2_vf\|_{\Ck^\a(\O)}+\|(\p_t+v\cdot\nabla_x)f\|_{\Ck^\a(\O)}. 
 \end{equation*}
\end{definition}

The consistence between these two definitions is given by \cite[Lemma 2.7]{ISkinetic} (see also \cite[Lemma 2.4]{IM}), a result that we state here as follows. 
\begin{lemma}\label{kinetic-degree}
Let $\a\in(0,1)$ and $f\in\Ck^{2+\a}(Q_2)$. Then, there exists some constant $C>0$ depending only on the dimension $d$ such that  
\begin{equation*}
\|\nabla_vf\|_{\Ck^{\a}(Q_1)} \le C \|f\|_{\Ck^{1+\a}(Q_2)},\quad
\|D^2_vf\|_{\Ck^\a(Q_1)}+\|(\p_t+v\cdot\nabla_x)f\|_{\Ck^\a(Q_1)} \le C \|f\|_{\Ck^{2+\a}(Q_2)}. 
\end{equation*}
\end{lemma}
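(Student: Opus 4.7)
The plan is to exploit the polynomial-approximation definition of $\Ck^{2+\a}$ (and $\Ck^{1+\a}$) to identify the coefficients of the canonical polynomial at a base point $z_{0}$ with the classical derivatives of $f$ at $z_{0}$, and then to obtain H\"older regularity of these coefficients by comparing the polynomials at two nearby base points. Since $\a\in(0,1)$, the constraint $\dk(p_{z_0})<2+\a$ forces $\dk(p_{z_0})\le 2$, so the only monomials allowed in $p_{z_0}$ are $1$, $v_{i}$, $v_{i}v_{j}$, and $t$ (no $x$-monomials, since $\dk(x_i)=3$). Taylor-matching along pure velocity directions and then along the pure time direction forces the identification
\begin{equation*}
p_{z_0}(z)=f(z_0)+\nabla_vf(z_0)\cdot v+\tfrac12\,v^{T}D^2_vf(z_0)\,v+(\p_t+v_0\cdot\nabla_x)f(z_0)\,t;
\end{equation*}
in the $\Ck^{1+\a}$ case only the first two terms survive.

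For the H\"older estimate, fix base points $z_0$ and $z_0\circ h$ in $Q_{3/2}$ (say) with $\|h\|$ small. By associativity $(z_0\circ h)\circ z=z_0\circ(h\circ z)$, applying the defining inequality of $\Ck^{2+\a}$ at each base point together with $\|h\circ z\|\le 3(\|h\|+\|z\|)$ yields
\begin{equation*}
|q(z)|:=|p_{z_0}(h\circ z)-p_{z_0\circ h}(z)|\le C\,[f]_{\Ck^{2+\a}(Q_2)}\bigl(\|h\|^{2+\a}+\|z\|^{2+\a}\bigr)
\end{equation*}
for $z$ in a small neighborhood of the origin. Now $q$ is a polynomial in $z$ of kinetic degree $\le 2$. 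The change of variables $z=S_{\|h\|}(w)$ with $\|w\|\le 1$ gives $|q(S_{\|h\|}(w))|\le C\|h\|^{2+\a}$, while each monomial $m_{j}$ of kinetic degree $d_{j}$ appearing in $q$ is rescaled by $\|h\|^{d_{j}}$. Since polynomials of bounded kinetic degree span a finite-dimensional space on which the sup-norm on $Q_1$ dominates every coefficient, one reads off $|c_{j}|\le C\|h\|^{2+\a-d_{j}}$ for each coefficient of $q$.

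It remains to interpret these coefficient bounds. Using the group law $h\circ z=(h_t+t,h_x+x+th_v,h_v+v)$, expansion of $p_{z_0}(h\circ z)$ as a polynomial in $z$ (recall $p_{z_0}$ does not depend on $x$) shows that the coefficient of $t$ in $q(z)$ is exactly $(\p_t+v_0\cdot\nabla_x)f(z_0)-(\p_t+(v_0+h_v)\cdot\nabla_x)f(z_0\circ h)$, which is precisely the increment of the kinetic transport derivative between $z_0$ and $z_0\circ h$; similarly, the coefficient of $v_iv_j$ is half the increment of $D^2_vf$. Specializing the bound $|c_j|\le C\|h\|^{2+\a-d_j}$ to $d_j=2$ then gives the H\"older estimates for $D^2_vf$ and $(\p_t+v\cdot\nabla_x)f$, which is the second inequality of the lemma. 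Repeating the same argument with $\Ck^{1+\a}$ in place of $\Ck^{2+\a}$ (polynomials now of kinetic degree $\le 1$, whose linear coefficient is precisely $\nabla_vf$) produces the first inequality.

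The main obstacle is the careful algebra of the last paragraph: one must track how the Galilean shift mixes the $t$-coefficient of $p_{z_0}$ with the $x$-gradient, so that the natural quantity extracted from the kinetic H\"older norm is the full transport derivative $\p_t+v\cdot\nabla_x$ rather than $\p_t$ alone. Everything else is bookkeeping on finite-dimensional polynomial spaces combined with the left-invariant rescaling $S_r$.
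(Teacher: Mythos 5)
The paper does not prove this lemma; it is stated as a direct citation of \cite[Lemma 2.7]{ISkinetic} (and \cite[Lemma 2.4]{IM}), so there is no in-paper proof to compare against. Your argument is, however, the standard one used in those references, and it is correct: the kinetic-degree constraint $\dk(p_{z_0})<2+\a<3$ forces $p_{z_0}$ to contain only the monomials $1,v_i,v_iv_j,t$; associativity of $\circ$ lets you compare $p_{z_0}(h\circ z)$ with $p_{z_0\circ h}(z)$; the left-invariant scaling $S_{\|h\|}$ together with equivalence of norms on the finite-dimensional space of polynomials of kinetic degree $\le 2$ gives the coefficient bounds $|c_j|\lesssim\|h\|^{2+\a-d_j}$; and because $p_{z_0}$ has no $x$-dependence, substituting the group law $h\circ z=(h_t+t,\,h_x+x+th_v,\,h_v+v)$ leaves the $t$-coefficient and the $v_iv_j$-coefficients of $p_{z_0}(h\circ z)$ unchanged, so that the $t$-coefficient and $v_iv_j$-coefficients of $q$ are precisely the increments of $(\p_t+v\cdot\nabla_x)f$ and $\tfrac12 D^2_vf$ between $z_0$ and $z_0\circ h$. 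The one point worth flagging is cosmetic: the scaling bound $|c_j|\lesssim\|h\|^{2+\a-d_j}$ requires $S_{\|h\|}(w)$, $h\circ S_{\|h\|}(w)$ to stay in the admissible domain; this is automatic for $\|h\|$ below some universal threshold, while for $\|h\|$ of order one the desired H\"older bound follows trivially from the $L^\infty$ bound on the coefficients (obtained by the same scaling with a fixed radius), so the argument is complete once this dichotomy is noted. Likewise the uniqueness of $p_{z_0}$ (needed for the coefficient functions to be well defined) follows from the same scaling argument applied to the difference of two admissible polynomials as the radius tends to zero. With those two routine remarks added, your proof is sound.
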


\begin{remark}\label{holderremark}
	For $\a>2$, one can easily check that the polynomial $p_0$ in \eqref{holdercontinuousineq} has the form
	\begin{equation*}
	p_0(t,x,v)= f(z_0)+(\p_t+v_0\cdot\nabla_x)f(z_0)\,t+\nabla_vf(z_0)\cdot v + \frac{1}{2} D^2_vf(z_0)\,v\cdot v+\ldots. 
	\end{equation*}
	In particular, if $\a\in(2,3)$, the polynomial expansion is independent of the $x$-variable. 
\end{remark}

\begin{remark}
	A subtle difference between $\Ck^2$ and $\Ckin^2$ comes from the fact that for $f\in\Ck^2$, $D^2_vf$, $(\p_t+v\cdot\nabla_x)f$ are lying in $L^\infty$ rather than ${\mathcal C}^0$. 
\end{remark}

We will also employ the following notions of weighted H\"older norms in Section~\ref{linearsection}. 

\begin{definition}\label{weighted-norm}
Let $z=(t,x,v)\in\O:=(0,T]\times\R^d\times\R^d$ with $T\in\R_+$. 
For $f\in\Ck^{\a}(\O)$ with $\a>0$ and $\s\in\R$, we define 
\begin{equation*}
[f]_{0}^{(\sigma)}:=
\sup\nolimits_{z\in\O}\iota^{\sigma}[f]_{\Ck^0(Q_\iota(z))},\ \ 
[f]_{\a}^{(\sigma)}:=
\sup\nolimits_{z\in\O}\iota^{\a+\sigma}[f]_{\Ck^\a(Q_\iota(z))},\ \ 
\|f\|_\a^{(\sigma)}:= [f]_0^{(\sigma)}+[f]_\a^{(\sigma)},  
\end{equation*} 
where $\iota:=\min\big\{1,t^\frac{1}{2}\big\}$ measures the distance between $z$ and the (parabolic) boundary of $\O$.
\end{definition}

\subsection{Other notations}
Throughout the article, $B_R$ denotes the Euclidean ball in $\R^d$ centered at the origin with radius $R>0$. 
We employ the Japanese bracket defined as $\la\cdot\ra:=\left(1+|\cdot|^2\right)^\frac{1}{2}$. 
By abuse of notation, $\la\cdot\ra$ will also denote the velocity mean in Section~\ref{diffusionlimit}. 

Moreover, we assume $0<\lambda<\Lambda$. 
We denote by $C$ a \emph{universal} constant, that is to say a constant depending only on $\b,d,\lambda,\Lambda,\a,\s,\a_0$ specified in context. 
Finally, we write $X\lesssim Y$ to denote that $X\le CY$ for some universal constant $C>0$, 
and $X\lesssim_{q} Y$ to denote that $X\le C_qY$ for $C_q>0$ depending only on universal constants and the quantity $q$.

\section{Kolmogorov-Fokker-Planck equation}\label{linearsection}
This section is devoted to the study of the Cauchy problem associated to the operator~\eqref{k}, 
\begin{equation}\label{linearequation}
\left\{ 
\begin{aligned}
\ &\L_1 f :=\left(\partial_t+v\cdot\nabla_{x}\right)f-\trace(AD_v^{2}f)-B\cdot\nabla_{v}f=s \quad{\rm in\ } (0,T] \times\R^{d}\times\R^{d}, \\
&\; f(0,x,v)=\fin(x,v) \quad {\rm in\ } \R^{d}\times\R^{d}, \\
\end{aligned}
\right. 
\end{equation} 
where the $d\times d$ symmetric matrix $A(t,x,v)$ and the $d$-dimensional vector $B(t,x,v)$ satisfy the following condition, 
\begin{equation}\label{schaudercondition}
\left\{ 
\begin{aligned}
\ & A\,\xi\cdot\xi\ge\lambda|\xi|^2{\quad\rm\ for\ any\ }\xi\in\R^d,\\
& \|A\|_{\Ck^\a} +\|B\|_{\Ck^\a}\le\Lambda, \\
\end{aligned}
\right. 
\end{equation} 
where $\a\in(0,1)$ and the norm $\|\cdot\|_{\Ck^\a(\O)}$ of matrix denotes the summation of the norm of each entry. The boundedness condition at infinity means that the solution shall be bounded, which is intended for the validity of maximum principle; see the proof of Lemma~\ref{max} below.

The aim of this section is to solve the Cauchy problem~\ref{linearequation} by virtue of the weighted H\"older norm (Definition~\ref{weighted-norm}) and by means of the standard continuity method combined with Schauder type estimates. 
One may refer to \cite[Subsection 6.5]{GilbargTrudinger} for the corresponding treatment in classical elliptic theory.  

Throughout this section we work with the domain $\O:=(0,T] \times\R^{d} \times\R^{d}$, with $T\in\R_+$. We shed light on the fact that all of the results below can be restricted to $(0,T] \times \T^{d} \times\R^{d}$ whenever required.

\subsection{Schauder estimates}
In order to apply the continuity method, first of all one needs to prove a global a priori estimate for solutions to \eqref{linearequation} with respect to the weighted H\"older norm. 
In the kinetic setting, we have at our disposal the following interior Schauder estimates proved in \cite[Theorem 3.9]{IM}.

\begin{proposition}[Interior Schauder estimate]\label{interiorschauder} 
Let the constant $\a\in(0,1)$ be given and the cylinder $Q_{2r}(z_0)\subset\O$ with $r\in(0,1]$. 
If $f$ verifies the equation~\eqref{k} with the condition~\eqref{schaudercondition} in $Q_{2r}(z_0)$ and $s\in\Ck^\a(Q_{2r}(z_0))$, then we have 
\begin{equation}\label{schauderIM}
r^{2+\a} [f]_{\Ck^{2+\a}(Q_r(z_0))}
\lesssim \|f\|_{L^\infty(Q_{2r}(z_0))}+ r^{2+\a} [s]_{\Ck^{\a}(Q_{2r}(z_0))}; 
\end{equation}
in particular, the right hand side controls $r^2\|\left(\p_t+v\cdot\nabla_x\right)f\|_{L^\infty(Q_r(z_0))} +r^2\|D_v^2f\|_{L^\infty(Q_r(z_0))}$. 
\end{proposition}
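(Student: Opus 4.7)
The Kolmogorov operator $\L_1$ is invariant under the kinetic scaling $S_r$ and left-invariant under the Galilean product $\circ$. Setting $\tilde f(z) := f(z_0 \circ S_r(z))$, $\tilde s(z) := r^2 s(z_0 \circ S_r(z))$, $\tilde A(z) := A(z_0 \circ S_r(z))$, $\tilde B(z) := r B(z_0 \circ S_r(z))$ transforms \eqref{linearequation} into an equation of the same form on $Q_2$, with coefficients still satisfying \eqref{schaudercondition} up to universal constants (here $r\le 1$ is used so that the $\Ck^\a$-seminorms of $\tilde A, \tilde B$ remain bounded by $\Lambda$). The kinetic Hölder seminorms scale as $[\tilde f]_{\Ck^{2+\a}(Q_1)} = r^{2+\a}[f]_{\Ck^{2+\a}(Q_r(z_0))}$ and $[\tilde s]_{\Ck^{\a}(Q_2)} = r^{2+\a}[s]_{\Ck^\a(Q_{2r}(z_0))}$, while $\|\tilde f\|_{L^\infty(Q_2)} = \|f\|_{L^\infty(Q_{2r}(z_0))}$. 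It therefore suffices to prove \eqref{schauderIM} with $r=1$ and $z_0=0$.

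\textbf{Step 2: Campanato characterization and frozen coefficients.} The kinetic Hölder seminorm admits a Campanato-type characterization: $[f]_{\Ck^{2+\a}(Q_1)}$ is comparable to the smallest $K$ such that at every $z_*\in Q_1$ one can find a polynomial $p_{z_*}$ with $\dk(p_{z_*}) < 2+\a$ satisfying
\[
\|f(z_* \circ \cdot) - p_{z_*}(\cdot)\|_{L^\infty(Q_\rho)} \le K\rho^{2+\a}
\]
for every $\rho\in(0,1]$. To produce such a polynomial at a point $z_*$, I freeze the coefficients at $z_*$ and, on a subcylinder $Q_\rho(z_*)$, split $f = g + h$, where $g$ solves the homogeneous constant-coefficient Kolmogorov equation
\[
(\p_t + v\cdot\nabla_x)g - \trace\bigl(A(z_*)D_v^2 g\bigr) - B(z_*)\cdot\nabla_v g = 0
\]
with boundary data equal to $f$, and $h$ solves the corresponding inhomogeneous constant-coefficient equation with vanishing boundary data and source $s + \trace\bigl((A-A(z_*))D_v^2 f\bigr) + (B-B(z_*))\cdot\nabla_v f$. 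The Hölder continuity of $A,B$ makes this source small: its $L^\infty$-norm on $Q_\rho(z_*)$ is at most $\|s\|_{L^\infty} + C\rho^\a[f]_{\Ck^{2+\a}(Q_1)}$.

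\textbf{Step 3: Constant-coefficient interior estimate and iteration.} The hypoelliptic regularity of the constant-coefficient Kolmogorov operator -- either via its explicit Gaussian fundamental solution, or equivalently through a Liouville-type theorem characterizing ancient solutions of bounded kinetic-polynomial growth as kinetic polynomials -- yields that $g$ is smooth inside $Q_\rho(z_*)$ and can be approximated by its kinetic Taylor polynomial $P_g$ (with $\dk(P_g) < 2+\a$, of the form described in Remark~\ref{holderremark}) with error
\[
\|g(z_*\circ \cdot) - P_g(\cdot)\|_{L^\infty(Q_{\theta\rho})} \le C\theta^{2+\a}\|g\|_{L^\infty(Q_\rho(z_*))},
\]
for any $\theta\in(0,\tfrac12)$. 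Combined with an $L^\infty$-bound for $h$ in terms of the $\Ck^\a$-norm of its source, this produces a decay-of-oscillation estimate: the best polynomial approximation of $f$ at $z_*$ gets better by a factor $\theta^{2+\a}$ on halving the scale, up to an additive contribution from $[s]_{\Ck^\a}$ and from $\theta^\a[f]_{\Ck^{2+\a}}$. Iterating along $\rho_k = \theta^k$, summing a geometric series in $\theta^\a$, and absorbing the small $\theta^\a[f]_{\Ck^{2+\a}}$ term by a standard interpolation/covering trick, yields the bound $K \lesssim \|f\|_{L^\infty(Q_2)} + [s]_{\Ck^\a(Q_2)}$, which is the desired estimate in the unit setting. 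The additional pointwise bounds on $(\p_t + v\cdot\nabla_x)f$ and $D_v^2 f$ are then read off from the Taylor expansion in Remark~\ref{holderremark} together with Lemma~\ref{kinetic-degree}. The main obstacle is precisely the constant-coefficient step: the hypoelliptic mismatch of regularity between $x$ and $v$ forbids a naive Euclidean Taylor expansion, and one must prove the Liouville classification against polynomials of bounded kinetic (not Euclidean) degree. Reconciling this non-Euclidean expansion with the Campanato formalism is the technical heart of \cite[Theorem~3.9]{IM}.
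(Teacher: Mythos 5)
The paper does not prove Proposition~\ref{interiorschauder} itself; it is quoted verbatim from \cite[Theorem~3.9]{IM}, so there is no in-paper argument to compare against. Your sketch correctly reproduces the structure of the proof in that reference: rescaling by $z\mapsto z_0\circ S_r(z)$ to reduce to $r=1$ (with the checks on $\tilde A,\tilde B$ exactly as you state, using $r\le 1$), a Campanato-type characterization of $\Ck^{2+\a}$ via approximation by polynomials of kinetic degree $<2+\a$, freezing of coefficients and splitting $f=g+h$, a constant-coefficient Kolmogorov estimate (via the explicit fundamental solution \eqref{fun-sol} or a kinetic Liouville theorem), and an iterative decay-of-oscillation argument with absorption of the $\theta^\a[f]_{\Ck^{2+\a}}$ remainder. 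Your concluding remark that the genuine difficulty is reconciling the kinetic (non-Euclidean) polynomial expansion with the Campanato scheme correctly identifies the core of \cite{IM}.
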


First of all, we enhance this result to a global estimate for the Cauchy problem \eqref{linearequation} under a vanishing condition for the initial data.

\begin{proposition}[Global Schauder estimate]\label{globalschauder} 
Let $\O=(0,T] \times\R^{d} \times\R^{d}$, and the constants $\a\in(0,1)$, $\s\in(0,2)$ be universal, $s\in\Ck^\a(\O)$ such that $\|s\|_{\alpha}^{(2-\sigma)}<\infty$ and $f$ be a bounded solution to the Cauchy problem~\eqref{linearequation} under the condition \eqref{schaudercondition} in $\O$. 
If the initial data $\fin=0$, then we have 
\begin{equation*}
\|f\|_{2+\alpha}^{(-\sigma)}\lesssim\|s\|_{\alpha}^{(2-\sigma)}. 
\end{equation*}
\end{proposition}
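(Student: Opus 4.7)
The plan is to localize the interior Schauder estimate of Proposition~\ref{interiorschauder} around each point $z_{0} = (t_{0}, x_{0}, v_{0}) \in \O$, on a cylinder whose radius is tuned to the distance from the initial hyperplane, and to feed in an $L^{\infty}$ bound produced by the vanishing of $\fin$ via the maximum principle. Fix $z_{0}$, write $\iota = \iota(z_{0}) = \min\{1, t_{0}^{1/2}\}$, and choose $r = \iota/2$, so that $Q_{2r}(z_{0}) = Q_{\iota}(z_{0})$ sits inside $\O$. Directly from Definition~\ref{weighted-norm}, the source term satisfies the pointwise bounds $|s(\tau, y, w)| \le \iota(\tau)^{\sigma-2} \|s\|_{\alpha}^{(2-\sigma)}$ and $[s]_{\Ck^{\alpha}(Q_{2r}(z_{0}))} \le \iota^{\sigma-2-\alpha} \|s\|_{\alpha}^{(2-\sigma)}$.

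The first step is the $L^{\infty}$ estimate. Since $\fin = 0$ and $B$ is bounded, the maximum principle for $\L_{1}$ (the content of the forthcoming Lemma~\ref{max}, equivalently the Duhamel representation against a nonnegative fundamental solution of $\L_{1}$) yields
\begin{equation*}
\|f(t, \cdot, \cdot)\|_{L^{\infty}} \le \int_{0}^{t} \|s(\tau, \cdot, \cdot)\|_{L^{\infty}} \, d\tau.
\end{equation*}
The singular factor $\iota(\tau)^{\sigma-2} = \tau^{(\sigma-2)/2}$ on $(0,1]$ is integrable since $\sigma > 0$, while on $[1, T]$ it is merely a constant, so a direct integration produces $\|f(t, \cdot, \cdot)\|_{L^{\infty}} \lesssim \iota(t)^{\sigma} \|s\|_{\alpha}^{(2-\sigma)}$ (with a constant allowed to depend on $T$). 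Taking the supremum over $z_{0} \in \O$ then gives $[f]_{0}^{(-\sigma)} \lesssim \|s\|_{\alpha}^{(2-\sigma)}$.

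The second step combines this $L^{\infty}$ bound with Proposition~\ref{interiorschauder} applied on $Q_{2r}(z_{0})$, which gives
\begin{equation*}
r^{2+\alpha} [f]_{\Ck^{2+\alpha}(Q_{r}(z_{0}))} \lesssim \|f\|_{L^{\infty}(Q_{2r}(z_{0}))} + r^{2+\alpha} [s]_{\Ck^{\alpha}(Q_{2r}(z_{0}))} \lesssim \iota^{\sigma} \|s\|_{\alpha}^{(2-\sigma)},
\end{equation*}
whence $\iota^{2+\alpha-\sigma} [f]_{\Ck^{2+\alpha}(Q_{\iota/2}(z_{0}))} \lesssim \|s\|_{\alpha}^{(2-\sigma)}$. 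To upgrade from the subcylinder $Q_{\iota/2}(z_{0})$ to the full $Q_{\iota}(z_{0})$ demanded by Definition~\ref{weighted-norm}, I use that $\iota(z)$ is comparable to $\iota(z_{0})$ uniformly for $z \in Q_{\iota}(z_{0})$, cover $Q_{\iota}(z_{0})$ by a universally bounded collection of $Q_{c\iota}(z)$ with $z \in Q_{\iota}(z_{0})$, apply the previous bound at each center, and handle H\"older quotients between points that do not share a common subcylinder by means of the $L^{\infty}$ bounds on $D_{v}^{2} f$ and $(\p_{t} + v \cdot \nabla_{x}) f$ supplied by Lemma~\ref{kinetic-degree} on each subcylinder.

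The main obstacle is Step~1: the $L^{\infty}$ estimate rests on a global maximum principle for $\L_{1}$ under only H\"older regularity of $A$ and bounded H\"older $B$, which is precisely the content the authors isolate in the forthcoming Lemma~\ref{max}. Once this ingredient is available, the application of the interior Schauder estimate and the covering argument become routine adaptations of classical weighted Schauder theory (cf.\ \cite[Subsection 6.5]{GilbargTrudinger}) to the kinetic geometry underlying the $\Ck$-spaces.
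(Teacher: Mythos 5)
Your argument has the same two-step skeleton as the paper's proof: an $L^\infty$ estimate on $f$ via the maximum principle, followed by the interior Schauder estimate at scale $r\sim\iota(z_0)$ and a supremum over $z_0$. Your Duhamel formula $\|f(t,\cdot,\cdot)\|_{L^\infty}\le\int_0^t\|s(\tau,\cdot,\cdot)\|_{L^\infty}\,\dif\tau$ is exactly the paper's barrier $\tfrac{2}{\sigma}[s]_0^{(2-\sigma)}t^{\sigma/2}$ once one inserts the pointwise bound $\|s(\tau,\cdot,\cdot)\|_{L^\infty}\le[s]_0^{(2-\sigma)}\tau^{\sigma/2-1}$ (for $\tau\le1$) and integrates, so Step~1 is the same argument, just presented differently.

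The one claim I would flag is in the covering sketch at the end: it is \emph{not} true that $\iota(z)$ is uniformly comparable to $\iota(z_0)$ on $Q_{\iota(z_0)}(z_0)$. With $2r=\iota(z_0)=t_0^{1/2}$, the cylinder $Q_{2r}(z_0)$ reaches exactly down to $\{t=0\}$, so $\iota(z)$ degenerates to zero near its bottom face, and in particular no universally bounded family of cylinders $Q_{c\iota(z)}(z)$ can cover it. What actually closes this step is a near/far dichotomy with no chain at all: if $\|z_0^{-1}\circ z_1\|\le c\,\iota(z_0)$ for a small universal $c$, the interior estimate on $Q_{\iota(z_0)/2}(z_0)$ applies directly; if $\|z_0^{-1}\circ z_1\|\gtrsim\iota(z_0)$, one bounds $|f(z_1)-p_{z_0}|$ crudely by $|f(z_1)|+|p_{z_0}|$, and each is $\lesssim\iota(z_0)^\sigma\big([f]_0^{(-\sigma)}+[s]_\alpha^{(2-\sigma)}\big)$ (using $\iota(z_1)\le\iota(z_0)$ for the first term and the pointwise derivative bounds supplied by Proposition~\ref{interiorschauder} for the coefficients of $p_{z_0}$), which is precisely the allowed size $\iota(z_0)^{-(2+\alpha-\sigma)}\|z_0^{-1}\circ z_1\|^{2+\alpha}$ in that regime. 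The paper compresses this into ``by the arbitrariness of $z_0$,'' so you are right that there is something to fill in here; just do not hang it on comparability of $\iota$, which fails.
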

	
\begin{proof}
In view of Proposition~\ref{interiorschauder}, it suffices to deal with the estimates around the initial time. Without loss of generality, we assume $T\le1$. 

Let $z_0=(t_0,x_0,v_0)\in\O$ and $2r=t_0^\frac{1}{2}$. 
Applying the interior Schauder estimate~\eqref{schauderIM} yields 
\begin{equation*}
r^{2+\alpha}[f]_{\Ck^{2+\a}(Q_{r}(z_0))}
\lesssim \|f\|_{L^\infty(Q_{2r}(z_0))}+ r^{2+\alpha}[s]_{\Ck^\alpha(Q_{2r}(z_0))}. 
\end{equation*}
It then follows from the arbitrariness of $z_0$ that, for any $\s<2$ such that $ [f]_{0}^{(-\s)}<\infty$, 
\begin{equation}\label{schauderIM0}
[f]_{2+\a}^{(-\s)} 
\lesssim [f]_{0}^{(-\s)}  + [s]_{\a}^{(2-\s)}. 
\end{equation}
With $\s\in(0,2)$, observing that 
\begin{align*}
\L_1\left(\frac{2}{\s}[s]_{0}^{(2-\s)}t^{\frac{\s}{2}}\pm f\right)=[s]_{0}^{(2-\s)}t^{\frac{\s}{2}-1}\pm s\ge0 
{\quad\rm in\ }\O\\
 \frac{2}{\s}[s]_{0}^{(2-\s)}t^\frac{\s}{2}\pm f=0 {\quad\rm on\ }\{t=0\}&, 
\end{align*} 
we apply the maximum principle (Lemma~\ref{max}) to the function $\frac{2}{\s}[s]_{0}^{(2-\s)}t^{\frac{\s}{2}}\pm f$ 
to deduce that 
$\pm t^{-\frac{\s}{2}} f\le \frac{2}{\s}[s]_{0}^{(2-\s)},$ 
which means 
\begin{equation*}
[f]_{0}^{(-\sigma)}\lesssim [s]_{0}^{(2-\sigma)}.
\end{equation*} 
Combining this estimate with \eqref{schauderIM0}, we get the desired result. 
\end{proof}

\subsection{Cauchy problem for the linear equation}
The goal of this subsection is to prove the well-posedness of the Cauchy problem~\eqref{linearequation} with H\"older continuous coefficients. 
\begin{proposition}\label{variable}
Let $\O=(0,T] \times\R^{d} \times\R^{d}$, and the constants $\alpha\in(0,1)$, $\s\in(0,2)$ be universal. Assume that 
\begin{equation}\label{schaudercondition'}
\left\{ 
\begin{aligned}
\ & A\,\xi\cdot\xi\ge\lambda|\xi|^2{\quad\rm\ for\ any\ }\xi\in\R^d,\\
& \|A\|_{\Ck^\a(\O)} +\|\la v\ra^{-1}B\|_{\Ck^\a(\O)}\le\Lambda.\\
\end{aligned}
\right. 
\end{equation}

\noindent
Then, for any $s\in\Ck^\a(\O)$ such that $\|s\|_{\alpha}^{(2-\sigma)}<\infty$ and $\fin\in {\mathcal C}^0(\R^d\times\R^d)$, 
there exists a unique bounded solution $f\in \Ck^{2+\a}(\O)$ to the Cauchy problem~\eqref{linearequation}. 
\end{proposition}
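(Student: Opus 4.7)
The plan is the classical continuity method built on top of Proposition~\ref{globalschauder}, prefaced by a reduction to vanishing initial data.

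Uniqueness is the easy part: if $f_1,f_2$ are two bounded solutions, then $f_1-f_2$ satisfies $\L_1(f_1-f_2)=0$ with zero initial data, so the maximum principle (Lemma~\ref{max}) forces $f_1=f_2$.

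For existence, I first handle the initial data. Let $\L_0 := \partial_t+v\cdot\nabla_x-\Delta_v$ denote the plain Kolmogorov operator, whose explicit Gaussian-type fundamental solution is known. Convolving this kernel against $\fin\in{\mathcal C}^0\cap L^\infty$ produces a bounded function $F$, smooth for $t>0$, solving $\L_0 F=0$ with $F(0,\cdot)=\fin$ and satisfying standard bounds $|\nabla_v^k F|\lesssim t^{-k/2}\|\fin\|_{L^\infty}$. Setting $\tilde f := f-F$ turns the problem into
\[
\L_1 \tilde f \;=\; s+\trace\bigl((I-A)D_v^2 F\bigr)-B\cdot\nabla_v F, \qquad \tilde f(0,\cdot)=0,
\]
and the modified source lies in the weighted class $\|\cdot\|_\alpha^{(2-\sigma)}<\infty$ thanks to the kernel bounds above together with \eqref{schaudercondition'}. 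It therefore suffices to solve the Cauchy problem with vanishing initial data.

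For this I run the continuity method on $\L_\tau := (1-\tau)\L_0+\tau\L_1$, $\tau\in[0,1]$, each of which satisfies~\eqref{schaudercondition'} uniformly in $\tau$. Define
\[
\Sigma := \Bigl\{\tau\in[0,1] : \L_\tau g=\tilde s,\; g(0,\cdot)=0,\; \text{is solvable in}\; \Ck^{2+\alpha}\; \text{with}\; \|g\|_{2+\alpha}^{(-\sigma)}<\infty\Bigr\}.
\]
Proposition~\ref{globalschauder} supplies the $\tau$-uniform a priori bound $\|g\|_{2+\alpha}^{(-\sigma)}\lesssim\|\tilde s\|_\alpha^{(2-\sigma)}$, which makes $\Sigma$ closed (Arzel\`a-Ascoli in $\Ck^{2+\alpha}_{\mathrm{loc}}$ applied to a sequence $\tau_n\in\Sigma$) and open (perturbing from $\tau$ to $\tau'$ via a contraction argument on $\L_{\tau'}=\L_\tau+(\tau'-\tau)(\L_1-\L_0)$ in the weighted norm). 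Nonemptiness at $\tau=0$ follows from solving the constant-coefficient problem by direct Gaussian convolution against $\tilde s$.

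The main obstacle I anticipate is the linear growth in $v$ of the drift $B$: Proposition~\ref{globalschauder} is stated under \eqref{schaudercondition} with bounded $B$, whereas here only $\|\la v\ra^{-1}B\|_{\Ck^\alpha}\le\Lambda$. I would address this by localizing on cylinders $Q_r(z_0)$ of radius $r\lesssim\la v_0\ra^{-1}$, on which the Galilean rescaling $z_0\circ S_r(\cdot)$ sends $B$ to a coefficient with bounded H\"older norm, so the interior Schauder bound of Proposition~\ref{interiorschauder} applies; patching via the weighted norms of Definition~\ref{weighted-norm} then restores the global estimate in the form needed for the continuity argument. The same localization is needed to place $B\cdot\nabla_v F$ in the weighted source class, exploiting the Gaussian decay of $F$ at large $v$ produced by the Kolmogorov kernel to absorb the linear weight in $B$.
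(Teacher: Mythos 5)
Your uniqueness argument and the general shape of the continuity method match the paper, but there are two genuine gaps, both connected to your choice to peel off the initial data by subtracting the free solution $F$ of $\L_0 F = 0$ with $F(0,\cdot)=\fin$.

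\textbf{The reduction to vanishing initial data fails for merely continuous $\fin$.} For $\fin\in{\mathcal C}^0\cap L^\infty$ the kernel bounds give $|D_v^2F|\lesssim t^{-1}\|\fin\|_{L^\infty}$ and nothing better; a modulus of continuity of $\fin$ yields only a non-quantitative $o(t^{-1})$. Your modified source contains $\trace\left((I-A)D_v^2F\right)$ (and there is no reason for $A$ to be close to $I$), so $|\tilde s|\sim t^{-1}$ near $t=0$. With $\iota=\min\{1,t^{1/2}\}$, the weighted bound $[\tilde s]_0^{(2-\sigma)}$ then scales like $\iota^{2-\sigma}\cdot t^{-1}\sim t^{-\sigma/2}$, which diverges for every $\sigma\in(0,2)$; the $\Ck^\alpha$ seminorm part fares even worse. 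Thus $\|\tilde s\|_\alpha^{(2-\sigma)}=\infty$ and Proposition~\ref{globalschauder} cannot be invoked. The paper sidesteps exactly this difficulty by doing the approximation in the other order: it mollifies $\fin$ to smooth $\fin^\varepsilon$, solves the $\varepsilon$-problem by the already-established vanishing-data result, and then passes to the limit using only the maximum principle for the uniform $L^\infty$ convergence of $f^\varepsilon$, with the interior Schauder estimate supplying compactness on compact subsets of $(0,T]$. Crucially, no weighted estimate uniform in $\varepsilon$ up to $t=0$ is needed; continuity at $t=0$ comes for free from uniform convergence.

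\textbf{The claim of Gaussian decay of $F$ at large $|v|$ is false.} The kernel $\Gamma$ in~\eqref{fun-sol} has Gaussian tails, but $F=\Gamma*\fin$ inherits no decay when $\fin$ is a non-decaying bounded function; for instance $\fin\equiv 1$ gives $F\equiv1$, and a bounded oscillating $\fin$ gives $\nabla_vF$ bounded but not decaying. So you cannot absorb the linear growth of $B$ into $\nabla_vF$ this way, and the term $B\cdot\nabla_vF$ would also fall outside the weighted class in the $v$-direction.

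On the unbounded drift, your localization on cylinders of radius $\lesssim\la v_0\ra^{-1}$ is a plausible alternative, but as written it is hand-waved: the weighted norms of Definition~\ref{weighted-norm} are built on cylinders of radius $\iota(t_0)$, not $\min\{\iota,\la v_0\ra^{-1}\}$, so ``patching'' would require redefining the weighted spaces and reproving the global Schauder estimate in the new setting. The paper's route is more economical: truncate $B$ to $B_n=B\varrho_n$, solve with each $B_n$ via Step~1, get a uniform $L^\infty$ bound from the maximum principle, use the interior Schauder estimate for precompactness in $\Ckin^2$ on compact subsets, and pass to the limit. That argument requires no new norms and treats the unbounded drift before touching the initial data, which is also what makes the paper's mollification step in Step~3 run smoothly.
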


\begin{remark}
In contrast with \eqref{schaudercondition}, the condition~\eqref{schaudercondition'} is weaker, which allows the coefficients of $B$ to not necessarily be bounded globally. This fact will be applied to the Ornstein-Uhlenbeck operator $\lfp=\left(\nabla_v-v\right)\cdot\nabla_v$ in Subsection~\ref{wellsection}. 
\end{remark}

The simplest possible setting of \eqref{linearequation} under the condition~\eqref{schaudercondition'} is recovered by choosing $A=I$ and $B=0$, which turns out to be the classical Kolmogorov operator $\L_0:=\p_t+v\cdot\nabla_x-\Delta_v$. 
Such operator was first studied in \cite{K1}, where its fundamental solution was calculated explicitly, 
\begin{equation}\label{fun-sol}
\G(t,x,v) = 
\begin{cases}
\ \left( \frac{\sqrt{3}}{2 \pi t^{2}} \right)^{d} 
 e^{-\frac{3\left|x+\frac{t}{2}v\right|^{2}}{t^{3}}-\tfrac{|v|^{2}}{4t}}&{\rm for\ }t>0,\\
\ \ 0 &{\rm for\ }t\le 0.
\end{cases}
\end{equation}
One can easily see that $\G$ is smooth outside of its pole (the origin). 
In fact, in this latter case the following result holds true.

\begin{lemma}\label{constant}
Let $\O=(0,T] \times\R^{d} \times\R^{d}$ and $\alpha\in(0,1)$. For any $s\in\Ck^\a(\O)$ such that $\|s\|_{\alpha}^{(2-\sigma)}<\infty$, the function 
\begin{equation}\label{representation}
f(t,x,v)=\int_{\R^d\times\R^d} \G\left((\t,\x,\y)^{-1}\circ(t,x,v)\right)s(\t,\x,\y) \dif\t \dif\xi\dif\y
\end{equation}
is the unique bounded solution in $\Ck^{2+\a}(\O)$ to \eqref{linearequation} with $\L_1$ replaced by $\L_0$ and $\fin=0$. 
\end{lemma}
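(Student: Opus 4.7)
The plan is to show, in order, that $f$ defined by \eqref{representation} is (a) well-defined and bounded, (b) continuously matches the zero initial datum, (c) satisfies $\L_0 f=s$ classically, and (d) belongs to $\Ck^{2+\alpha}(\O)$; uniqueness is then handled separately via the maximum principle.

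For steps (a) and (b), use that $\G(\t,\cdot,\cdot)$ is a probability density on $\R^d\times\R^d$ for each $\t>0$ (evident from \eqref{fun-sol}) and vanishes for $\t\le 0$. Combined with the pointwise bound $|s(\t,\cdot,\cdot)|\le [s]_0^{(2-\sigma)}\iota_\t^{-(2-\sigma)}$, where $\iota_\t=\min\{1,\t^{1/2}\}$, the left-invariant change of variable $(\x,\y)\mapsto(\t,\x,\y)^{-1}\circ(t,x,v)$ and Fubini yield
\begin{equation*}
|f(t,x,v)|\le [s]_0^{(2-\sigma)}\int_0^t \iota_\t^{-(2-\sigma)}\,\dif\t \lesssim t^{\sigma/2}+t,
\end{equation*}
uniformly in $(x,v)\in\R^d\times\R^d$. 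This establishes boundedness on the finite time window $[0,T]$, and for small $t$ the sharper bound $|f(t,\cdot,\cdot)|\lesssim t^{\sigma/2}\to 0$ as $t\to 0^+$ gives (b).

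Step (c) is carried out in two stages. First, for smooth and compactly supported sources $s\in\mathcal{C}^\infty_c(\O)$, differentiation under the integral in \eqref{representation} is legitimate because $\G$ is smooth off the pole and $s$ vanishes in a neighbourhood of it; the fundamental identity $\L_0\G=\d_0$ on $\R^{1+2d}$ then gives $\L_0 f=s$. For general $s$ with $\|s\|_\alpha^{(2-\sigma)}<\infty$, the standard freezing argument at each point $z_0=(t_0,x_0,v_0)\in\O$ is invoked: decomposing $s=s(z_0)+[s-s(z_0)]$, the constant part produces $s(z_0)$ through the smooth case, while the Hölder remainder is pointwise bounded by $[s]_\alpha^{(2-\sigma)}\iota_\t^{-(2-\sigma)}\|(\t,\x,\y)^{-1}\circ z_0\|^\alpha$; this offsets the singularity of $D_v^2\G$ and $(\p_t+v\cdot\nabla_x)\G$ near the pole precisely because $\alpha>0$, yielding $\L_0 f(z_0)=s(z_0)$ classically.

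With (b) and (c) established, Proposition~\ref{globalschauder} applied with $A=I$, $B=0$ (trivially satisfying \eqref{schaudercondition}) delivers both $f\in\Ck^{2+\alpha}(\O)$ and the bound $\|f\|_{2+\alpha}^{(-\sigma)}\lesssim\|s\|_\alpha^{(2-\sigma)}$, so step (d) holds. For uniqueness, if $f_1,f_2$ are two bounded $\Ck^{2+\alpha}(\O)$-solutions then their difference $w$ is a bounded classical solution of $\L_0 w=0$ with $w|_{t=0}=0$, and Lemma~\ref{max} forces $w\equiv 0$. The main obstacle is step (c): passing the second velocity derivative and the transport derivative inside the integral against a singular kernel requires an explicit cancellation, which is exactly what the $\alpha$-Hölder regularity of $s$ provides via the freezing trick described above.
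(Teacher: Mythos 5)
The paper itself does not include a proof of Lemma~\ref{constant}: it quotes Kolmogorov's explicit formula \eqref{fun-sol} from \cite{K1} and then states the lemma as a classical fact, without argument. So there is no paper proof to compare against; I can only assess your proposal on its own terms, and it is essentially sound. The structure (well-posedness of the convolution, continuous matching of the zero datum via $\G(\t,\cdot,\cdot)$ being a probability density and the time-singular bound $\int_0^t\iota_\tau^{-(2-\sigma)}\dif\tau\lesssim t^{\sigma/2}+t$, classical verification by freezing, uniqueness by the maximum principle) is the standard potential-theoretic route.

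One place where you should be a bit more careful is the interplay between steps (c) and (d). Proposition~\ref{globalschauder} is an \emph{a priori} estimate: it controls $\|f\|_{2+\alpha}^{(-\sigma)}$ assuming $f$ is already a solution with enough regularity for the interior Schauder estimate to apply. Your step (c), as written, only establishes the \emph{pointwise} identity $\L_0 f(z_0)=s(z_0)$; to invoke Proposition~\ref{globalschauder} you need $f\in\Ckin^2_{\rm loc}(\O)$ at least, and strictly speaking the cited interior Schauder estimate of \cite{IM} is stated for solutions, not for a priori unknown functions. Two clean ways to close this: either (i) push the freezing/Korn device one step further, noting that the same cancellation that controls $D_v^2 f$ and $(\p_t+v\cdot\nabla_x)f$ pointwise also yields their $\Ck^\alpha$-continuity, so that step (c) directly lands you in $\Ck^{2+\alpha}_{\rm loc}$; or (ii) avoid freezing entirely by mollifying the source, setting $s_n\in\mathcal{C}^\infty_c$ with $s_n\to s$ and $\|s_n\|_\alpha^{(2-\sigma)}$ uniformly bounded, taking $f_n=\G*s_n$ smooth with $\L_0 f_n=s_n$ by ordinary differentiation under the integral, applying Proposition~\ref{globalschauder} to each $f_n$, and passing to the limit in $\Ckin^2_{\rm loc}$ using Arzel\`a--Ascoli. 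Route (ii) is closer to how the paper itself handles rough data in Step~3 of Proposition~\ref{variable}, and it sidesteps the singular-kernel cancellation you correctly flag as the technical crux.

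Finally, for completeness: the identity $\L_0 f=s$ for smooth $s$ hinges on $\L_0$ commuting with left translations $z\mapsto w^{-1}\circ z$; this is true because the Galilean shift sends $(\p_t+v\cdot\nabla_x)$ to itself and leaves $\Delta_v$ unchanged, but it is worth recording explicitly since the analogous statement fails for the Euclidean translations.
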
 
\begin{remark}
When the spatial domain is $\T^d$, one can apply the Green function 
${G}(t,x,v):=\sum\nolimits_{n\in\Z^d}\G(t,x+n,v)$, which is well-defined due to the decay of $\G$. 
\end{remark}

We are now in a position to apply the standard continuity method to derive Proposition~\ref{variable}. 
\begin{proof}[Proof of Proposition~\ref{variable}]
We split the proof in three steps. 
In the first step, we establish the case for vanishing initial data under the stronger assumption~\eqref{schaudercondition}. 
We point out that the assumption on the coefficient $B$ can be weakened in the second step. 
Finally, we deal with general continuous initial data.

\medskip\noindent{\textbf{Step 1.}}	
Assume that $\fin=0$ and the condition~\eqref{schaudercondition} holds. 
Let the constant $\s\in(0,2)$ be fixed and consider the Banach space 
$\Y:=\left(\Ck^{2+\alpha}(\O),\|\cdot\|_{2+\alpha}^{(-\sigma)}\right)$. 
In particular, every function lying in $\Y$ vanishes at $t=0$. 

For $\t \in [0,1]$, we define the operator
$\L_\t:=(1-\t)\L_0+\t\L_1$, which can be written in the form of 
\begin{equation*}
\L_\t=\partial_t+v\cdot\nabla_{x}-\trace(A_\t D_v^{2}\cdot)-\t B\cdot\nabla_{v},
\end{equation*}
where its coefficients $A_\t:=(1-\t)I+\t A$ and $\t B$ still satisfy the condition~\eqref{schaudercondition} (with $\lambda$, $\Lambda$ replaced by $\min\{1,\lambda\}$, $\max\{1,\Lambda\}$, respectively). 
For any $w\in \Y$, we have
\begin{equation}\label{contraction0}
\|\L_\t w\|_{\alpha}^{(2-\sigma)}
\lesssim \left(1+\|A_\t\|_{\alpha}^{(2)}\right)\|w\|_{2+\alpha}^{(-\sigma)}
+\|B\|_{\alpha}^{(2)}\|w\|_{1+\alpha}^{(-\sigma)}
\lesssim\|w\|_{2+\alpha}^{(-\sigma)}.
\end{equation}

Let the set $\mathcal{I}$ be the collection of $\t\in[0,1]$ such that
the Cauchy problem~\eqref{zerocauchy} is solvable for any $s\in \Ck^\alpha(\O)$ with $\|s\|_{\alpha}^{(2-\sigma)}<\infty$: 
there is a unique bounded solution $f\in \Y$ verifying 
\begin{equation}\label{zerocauchy}
\left\{ 
\begin{aligned}
\ & \L_\t f=s  {\quad\rm in\ } \O, \\
\ &\;f(0,x,v)=0 {\quad\rm in\ } \R^{d}\times\R^{d}. 
\end{aligned}
\right. 
\end{equation}  
By Lemma~\ref{constant}, we see that $0\in\mathcal{I}$; in particular, $\mathcal{I}$ is not empty. 

It now suffices to show that $1\in\mathcal{I}$.
Pick $\t_0\in\mathcal{I}$, then the global Schauder estimate provided by Proposition~\ref{globalschauder} implies that, for any $s\in\Ck^\a(\O)$ with $\|s\|_{\alpha}^{(2-\sigma)}<\infty$, $f=\L^{-1}_{\t_0} s$ satisfies 
\begin{equation}\label{contraction}
\|\L^{-1}_{\t_0} s\|_{2+\alpha}^{(-\sigma)}
\lesssim\|s\|_{\alpha}^{(2-\sigma)}. 
\end{equation}
For any $w\in \Y$, since $\t_0\in\mathcal{I}$ and \eqref{contraction0} holds, 
the following Cauchy problem is solvable for any $s\in \Ck^\alpha(\O)$ with $\|s\|_{\alpha}^{(2-\sigma)}<\infty$, 
\begin{equation*}
\left\{ 
\begin{aligned}
\ & \L_{\t_0} f=s+(\t-\t_0)\left(\L_0-\L_1\right)w {\quad\rm in\ } \O,\\
\ &\; f(0,x,v)=0 {\quad\rm in\ } \R^{d}\times\R^{d}. 
\end{aligned}
\right. 
\end{equation*} 
Thus, we can define the mapping $F:\Y\rightarrow \Y$ by setting $F(w)=f$.
Armed with \eqref{contraction} and \eqref{contraction0}, there exists a universal constant $C>0$ such that, for any $u,w\in \Y$, 
\begin{equation*}
\|F(u)-F(w)\|_{2+\alpha}^{(-\sigma)} 
\le C|\t-\t_0| \left\|\left(\L_0-\L_1\right)(u-w)\right\|_{\a}^{(2-\sigma)}
\le C|\t-\t_0|\|u-w\|_{2+\alpha}^{(-\sigma)}. 
\end{equation*}
Hence, $F$ is a contraction mapping, provided that $|\t-\t_0|\le \delta:=\frac{1}{2C}$. 
Then, $F$ gives a unique fixed point $f\in \Y$, that is the unique bounded solution to the Cauchy problem \eqref{zerocauchy} in $\Y$. 
By dividing the interval $[0,1]$ into subintervals of length less than $\delta$, we conclude that $1\in\mathcal{I}$.

\medskip\noindent{\textbf{Step 2.}}
If $\fin=0$ and the condition~\eqref{schaudercondition'} holds, we approximate the coefficient $B$ by $B_n:=B\varrho_n$, 
where $\varrho_n(v):=\varrho_1\big(\frac{v}{n}\big)$ for $v\in\R^d$, $n\in\N_+$, and $\varrho_1\in{\mathcal C}_c^\infty(B_2)$ is valued in $[0,1]$ such that $\varrho_1\equiv1$ in $B_1$. 
Then, for each $n\in\N_+$, the result obtained in the previous step provides a bounded solution $f_n$ to the equation~\eqref{linearequation} with $B$ replaced by $B_n$. 
Indeed, applying the maximum principle (Lemma~\ref{max}) to the function $\pm f-e^t\sup_\O |s|$ implies that $\sup_\O |f_n|\le e^T\sup_\O |s|$. 
Thanks to the interior Schauder estimate (Proposition~\ref{interiorschauder}), for any compact subset $K\subset\O$, $\{f_n\}_{n\ge N}$ is precompact in $\Ckin^2(K)$, provided that $N$ (depending on $K$) is large enough. 
Sending $n\rightarrow\infty$ in the equation satisfied by $f_n$ yields that the limit function $f\in\Ck^{2+\a}(\O)$ is a bounded solution to \eqref{linearequation}, which satisfies $\sup_\O |f|\le e^T\sup_\O |s|$.

\medskip\noindent{\textbf{Step 3.}}		
For general $\fin \in {\mathcal C}^0(\R^d\times\R^d)$, we approximate $\fin$ uniformly as $\varepsilon\rightarrow0$ by a sequence of smooth functions $\{\fin^\varepsilon\}$ on $\R^d\times\R^d$. 
Thus, the function $f-\fin^\varepsilon$ is a solution to \eqref{linearequation}, with the source term equal to $s-\L_1\fin^\varepsilon$, and associated with the vanishing initial data. 
The procedure presented in the previous steps ensures a unique bounded solution $f^\varepsilon$ to \eqref{linearequation} for each $\fin^\varepsilon$. 

The uniform convergence of $\{\fin^\varepsilon\}$ and the maximum principle (Lemma~\ref{max}) implies the uniform convergence of $\{f^\varepsilon\}$. 
We may denote its limit by $f\in{\mathcal C}^0(\overline{\O})$, which satisfies $f(0,x,v)=\fin(x,v)$ on $\R^d\times\R^d$. 
The interior Schauder estimate again implies that $\{f^\varepsilon\}$ is precompact in $\Ckin^2(K)$ for any compact subset $K\subset\O$; 
then sending $\varepsilon\rightarrow0$ gives the solution $f\in\Ck^{2+\a}(\O)$ to the equation~\eqref{linearequation}. 
Its uniqueness is again given by the maximum principle. 
This concludes the proof. 
\end{proof}

\section{Well-posedness of the nonlinear model}\label{nonlinear}
This section is devoted to the proof of Theorem \ref{wellpose} (i), including a self-generating lower bound given in Subsection~\ref{lowersection},  the existence and uniqueness given in Subsection~\ref{wellsection} and smoothness a priori estimate given in Subsection~\ref{smoothsection}.

First of all, the Cauchy problem \eqref{nk} can be recasted in terms of the unknown function $g(t,x,v):= \m(v)^{-\frac{1}{2}}f(t,x,v)$ with $g_{\rm in}(x,v):=\m(v)^{-\frac{1}{2}}\fin(x,v)$ as follows, 
\begin{align}\label{gnk}
&\left\{ 
\begin{aligned}
\ &\left(\p_t+ v\cdot\nabla_x\right)g=\RO[g]\, \UO[g], \\
\ &\; g(0,x,v)= g_{\rm in}(x,v), \\
\end{aligned}
\right. 
\end{align} 
where $\RO[g]$ and $\UO[g]$ on the right hand side are defined by  
\begin{equation*}
\RO[g]:= \left(\int_{\R^{d}} g\m^{\frac{1}{2}} \dif v\right)^\b {\quad\rm and\quad}
\UO[g]:=\m^{-\frac{1}{2}}\nabla_v\left(\m\nabla_{v}\left(\m^{-\frac{1}{2}}g\right)\right)
=\Delta_vg+\left(\frac{d}{2}-\frac{|v|^{2}}{4}\right)g. 
\end{equation*}
The main advantage of this formulation is that it allows us to get rid of the first order term in $v$ and the zero order term is bounded, since $g$ is bounded from above by a Maxwellian. 

For convenience, we are also concerned with the substitution $h(t,x,v):= \m(v)^{-1}f(t,x,v)$ and the Ornstein-Uhlenbeck operator $\lfp:=\left(\nabla_v-v\right)\cdot\nabla_v$. The equation~\eqref{nk} is then equivalent to 
\begin{equation}\label{rnk}
\left(\p_t+v\cdot\nabla_x\right) h(t,x,v) = \rfp_h(t,x)\,\lfp h(t,x,v),\quad 
\rfp_h(t,x):= \left(\int_{\R^{d}} h(t,x,v)\dif\m\right)^\b. 
\end{equation} 
In contrast with \eqref{nk}, the zero order term disappears. 
Let us begin by exhibiting the global bounds of solutions to \eqref{rnk} in $(0,T)\times\T^d\times\R^d$, which is a variant of \cite[Lemma 4.1]{IM}.

\begin{lemma}[Global bounds]\label{Gaussian}
Let $a(t,x)\in L^\infty((0,T)\times\T^d)$ be nonnegative. 
Assume that $h(t,x,v)\in L^\infty \big((0,T);H^1\big(\T^d\times\R^d,\dif m\big)\big)$ verifies the equation $\left(\p_t+v\cdot\nabla_x\right)h=a\lfp h$ in $(0,T)\times\T^d\times\R^d$ in the sense of distributions. 
If $h(0,\cdot,\cdot)\le\Lambda$ in $\T^d\times\R^d$, then $h\le\Lambda$ in $(0,T)\times\T^d\times\R^d$; 
if $h(0,\cdot,\cdot)\ge\lambda$ in $\T^d\times\R^d$, then $h\ge\lambda$ in $(0,T)\times\T^d\times\R^d$.
\end{lemma}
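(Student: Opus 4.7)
The statement is a standard $L^2$-based maximum principle for the Ornstein--Uhlenbeck operator $\lfp$, and the plan is to run an energy estimate against $(h-\Lambda)_+$ in the Gaussian-weighted space $L^2(\dif m)$, exploiting the self-adjointness of $\lfp$ in $L^2(\dif\m)$ and the sign of the coefficient $a$.

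First, I would observe that $\lfp=\nabla_v\cdot(\nabla_v \cdot) - v\cdot\nabla_v$ satisfies the integration-by-parts identity
\begin{equation*}
\int_{\R^d}(\lfp\varphi)\,\psi\,\dif\m = -\int_{\R^d}\nabla_v\varphi\cdot\nabla_v\psi\,\dif\m
\end{equation*}
for smooth $\varphi,\psi$ with Gaussian-integrable gradients, so that $\lfp$ is nonpositive and self-adjoint in $L^2(\dif\m)$. Next, I would test the equation $(\p_t+v\cdot\nabla_x)h=a(t,x)\,\lfp h$ against $(h-\Lambda)_+$ in $L^2(\T^d\times\R^d,\dif m)$. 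The transport term integrates to zero: on the torus $\int v\cdot\nabla_x\big[(h-\Lambda)_+^2/2\big]\dif m=0$ by the periodic boundary conditions together with the fact that the vector field $(v,0)$ is divergence-free in phase space (and the weight $\m$ depends only on $v$, so it is preserved by free transport). For the diffusion term, since $a(t,x)\ge0$ does not depend on $v$, the identity above yields
\begin{equation*}
\int_{\T^d\times\R^d}a\,(\lfp h)(h-\Lambda)_+\dif m=-\int_{\T^d\times\R^d}a\,\big|\nabla_v(h-\Lambda)_+\big|^2\dif m\le0,
\end{equation*}
using the Stampacchia-type chain rule $\nabla_v(h-\Lambda)_+=\nabla_v h\cdot\mathbbm{1}_{\{h>\Lambda\}}$. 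Combining these two facts gives
\begin{equation*}
\frac{\dif}{\dif t}\int_{\T^d\times\R^d}(h-\Lambda)_+^2\,\dif m\le 0,
\end{equation*}
and since $(h(0,\cdot,\cdot)-\Lambda)_+\equiv0$ by hypothesis, the $L^2(\dif m)$-norm of $(h-\Lambda)_+$ remains zero for all $t\in(0,T)$, hence $h\le\Lambda$ almost everywhere. The lower bound $h\ge\lambda$ is obtained identically by testing against $(\lambda-h)_+$.

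The only genuine obstacle is rigorously justifying the chain rule and the two integrations by parts under the assumed regularity $h\in L^\infty\big((0,T);H^1(\T^d\times\R^d,\dif m)\big)$ together with the fact that the equation holds only distributionally. The standard remedy is to regularize in time by convolving $h$ with a mollifier $\kappa_\delta(t)$: the mollified $h_\delta$ satisfies $(\p_t+v\cdot\nabla_x)h_\delta=(a\,\lfp h)\ast_t\kappa_\delta$ pointwise in $t$ with values in $H^{-1}$, so that a duality pairing with $(h_\delta-\Lambda)_+\in L^2_t H^1$ is legitimate, the chain rule applies (Stampacchia), and then $\delta\to0$ recovers the estimate. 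The Gaussian weight poses no issue because $\lfp$ is precisely the operator that is dissipative in $L^2(\dif\m)$, and all quadratic quantities appearing are controlled by $\|h\|_{L^\infty_tH^1(\dif m)}$ uniformly. Once this standard approximation is in place, the conclusion is immediate from the monotonicity of $t\mapsto\|(h-\Lambda)_+(t)\|_{L^2(\dif m)}^2$.
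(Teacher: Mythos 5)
Your proposal is correct and follows essentially the same energy-estimate approach as the paper: test against $(h-\Lambda)_+$ in $L^2(\dif m)$, observe that the transport term vanishes on the torus and the diffusion term is sign-definite via $\int(\lfp\varphi)\psi\,\dif\m=-\int\nabla_v\varphi\cdot\nabla_v\psi\,\dif\m$, then conclude from $(h(0)-\Lambda)_+\equiv0$. The paper's proof is terse and omits the mollification justification you add at the end, but the core argument is identical.
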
 
\begin{proof}
Integrating the equation $\left(\p_t+v\cdot\nabla_x\right)(h-\Lambda)=a\lfp(h-\Lambda)$ against $(h-\Lambda)_+$ yields
\begin{align*}
\frac{1}{2}\int_{\T^d\times\R^d}\left[(h(t,\cdot,\cdot)-\Lambda)_+^2-(h(0,\cdot,\cdot)-\Lambda)_+^2\right]\dif m&\\
=-\int_{[0,t]\times\T^d\times\R^d}a\left|\nabla_v(h-\Lambda)_+\right|^2 \dif t \dif m\le0&, 
\end{align*}
for any $t\in[0,T]$. 
This means that the upper bound is preserved along time. 
Similarly, the lower bound can be obtained by integrating the equation $\left(\p_t+v\cdot\nabla_x\right)(\lambda-h)=a\lfp(\lambda-h)$ against $(\lambda-h)_+$. 
\end{proof}

In particular, the above global bounds preserving result holds for solutions to the equations~\eqref{rnk} and \eqref{epnk} in $(0,T)\times\T^d\times\R^d$. 
We will also apply such result to the substitution $g=\m^\frac{1}{2}h$ appearing in the following Subsection~\ref{wellsection} below. 
Unless otherwise specified, throughout this section we set the domain  $\O:=(0,T]\times\T^d\times\R^d$ with $T\in\R_+$. 
Nevertheless, as it is specified in Remark~\ref{lowerwholeremark}, Corollary~\ref{existwhole} and Proposition~\ref{unique} below, 
the results of this section also hold if the spatial domain is $\R^d$.

\subsection{Self-generating lower bound}\label{lowersection}
Throughout this subsection, we assume that the bounded solution $h$ of \eqref{nk} lies below the universal constant $\Lambda$, which is guaranteed by Lemma~\ref{Gaussian} if the initial data lies below $\Lambda$. 
The aim of this subsection is to show the following positivity-spreading result. We remark that this proposition only relies on the mixing structure of the classical parabolic-type maximum principle and the transport operator, but not on the structure of the mass conservation. 

\begin{proposition}[Lower bound]\label{lower}
Let $\delta>0$, $\underline{T}\in(0,T)$ and $h$ be a bounded solution to \eqref{rnk} in $\O$ satisfying
\begin{equation}\label{lowers00}
h(0,x,v)\ge\delta\mathbbm{1}_{\left\{|x-x_0|<r,\,|v-v_0|<r\right\}},
\end{equation}
for some $(x_0,v_0)\in\T^d\times\R^d$. 
Then, there exist two positive continuous functions $\eta_1(t),\eta_2(t)$ on $(0,T]$ depending only on universal constants, $T,\delta,r$ and $v_0$ such that for any $(t,x,v)\in\O$, 
\begin{equation}\label{lowers0}
h(t,x,v)\ge \eta_1(t) e^{-\eta_2(t)|v|^2}. 
\end{equation}
\end{proposition}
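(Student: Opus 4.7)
The strategy follows the two-ingredient scheme indicated in the introduction: propagation of positivity forward in time via a barrier function, combined with a spreading in velocity via the local Harnack inequality of \cite{GIMV} along a kinetic Harnack chain; these are then iterated in order to produce the Gaussian tail in $v$. Throughout, the key structural fact is that since $0 \le h \le \Lambda$, the coefficient $\rfp_h(t,x) = \bigl(\int h \dif\m\bigr)^\b$ is bounded above by $\Lambda^\b$, while Step 1 below will provide a pointwise lower bound on $\rfp_h$ on the regions that matter for Step 2, so that the velocity diffusion in \eqref{rnk} is uniformly elliptic wherever the Harnack machinery is applied.

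\textbf{Step 1 (Forward propagation, Lemma~\ref{lowerlemma1}).} The hypothesis \eqref{lowers00} implies $\r_h(0,x) \ge \d\,\meas(B_r) > 0$ for $x$ in a neighborhood of $x_{0}$, hence by continuity $\rfp_h \ge c_{0} > 0$ on some small cylinder. On this cylinder, the equation \eqref{rnk} reads $(\p_{t}+v\cdot\nabla_{x})h = \rfp_h\,\lfp h$ with $\rfp_h \in [c_{0}, \Lambda^{\b}]$. I would construct an explicit lower barrier of the form
\begin{equation*}
\underline{h}(t,x,v) = \zeta(t)\,\exp\bigl\{-A(t)\,|v-v_{0}|^{2} - B(t)\,|x - x_{0} - t v_{0}|^{2}\bigr\},
\end{equation*}
where the three scalar functions $\zeta, A, B$ are chosen so that, on the one hand, $\underline{h} \le h$ on the parabolic boundary (in particular $\underline{h}(0,\cdot,\cdot) \le \d\,\mathbbm{1}_{\{|x-x_{0}|<r,|v-v_{0}|<r\}}$), and on the other hand $(\p_{t}+v\cdot\nabla_{x})\underline{h} \le \rfp_h\,\lfp \underline{h}$ pointwise. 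The Galilean combination $x-x_{0}-tv_{0}$ is designed precisely to absorb the transport term; matching the coefficients then yields ordinary differential inequalities for $A(t), B(t), \zeta(t)$ that can be solved explicitly. The maximum principle (exactly as in Lemma~\ref{Gaussian}, applied to $(h-\underline{h})_{-}$) concludes $h \ge \underline{h}$ on a cylinder $(0,t_{1}]\times \T^{d}\times\R^{d}$, providing a Gaussian-in-$v$ lower bound around $v_{0}$, valid for all $x\in\T^{d}$ once $t_{1}$ is large enough to let the transport mix across the torus.

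\textbf{Step 2 (Velocity spreading, Lemma~\ref{lowerlemma2}).} Having Step~1, fix a later time $t_{2}\in(t_{1},T]$, a point $x\in\T^{d}$, and a target velocity $v_{*}\in\R^{d}$ that may be large. To bound $h(t_{2},x,v_{*})$ from below I construct a Harnack chain: a finite sequence of kinetic cylinders $Q_{\r}(\bar z_{k})$, $k=0,1,\ldots,N$, where $\bar z_{0}$ lies in the region of already-known positivity from Step~1, $(t_{2},x,v_{*})\in Q_{\r}(\bar z_{N})$, and each consecutive pair $\bar z_{k},\bar z_{k+1}$ is aligned under the Galilean group $\circ$ and kinetic scaling $S_{\r}$ so that the local Harnack inequality of \cite{GIMV} can be applied on each cylinder (which is legitimate because by Step~1 the coefficient $\rfp_h$ is bounded below on the spatial projection of the chain). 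Each Harnack step loses a multiplicative factor $C>1$, so after $N$ steps one obtains $h(t_{2},x,v_{*}) \ge C^{-N}\d_{1}$, where $\d_{1}$ is the lower bound coming out of Step~1. A careful geometric count shows that to reach velocity $|v_{*}|$ one needs $N\lesssim 1+|v_{*}|^{2}$ cylinders (with $\r$ chosen depending on $t_{2}$), giving exactly the Gaussian degradation $\h_{1}(t_{2})\exp(-\h_{2}(t_{2})|v_{*}|^{2})$. The continuous dependence of $\h_{1},\h_{2}$ on $t$ in $(0,T]$ follows from continuously choosing the parameters $(\r,N,t_{1},\ldots)$ as functions of $t$.

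\textbf{Main obstacle.} The delicate point lies in Step~2, specifically in simultaneously controlling three things along the Harnack chain: (a) that every cylinder $Q_{\r}(\bar z_{k})$ stays inside the region $\{\rfp_h \ge c_{0}\}$ where the Harnack inequality of \cite{GIMV} is uniformly valid --- this is what forces Step~1 to be run first and on a large enough spatial region (the compactness of $\T^{d}$ is used here), (b) that the Galilean offset $tv_{0}$ between consecutive cylinders correctly matches the past cone so that $\bar z_{k+1}$ really does lie in a position where $h$ is already known to be bounded below from the previous step, and (c) that the number $N$ of steps grows no faster than $|v_{*}|^{2}$, since this quadratic growth is what gives the sharp Gaussian in the statement rather than a weaker decay. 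The dependence of $\h_{1},\h_{2}$ on $v_{0}$ (and on $T$) enters through the choice of $\bar z_{0}$ and through the initial radius $\r$, and one has to keep track of these carefully to ensure the continuity of $\h_{1},\h_{2}$ up to $t=T$ (but not up to $t=0^{+}$, which is why the lower bound is only claimed on $(0,T]$).
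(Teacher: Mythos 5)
Your two-ingredient scheme (barrier function for forward propagation, Harnack chain for velocity spreading) is indeed the same pair of mechanisms that the paper uses (Lemmas~\ref{lowerlemma1} and \ref{lowerlemma2}), but there are two concrete gaps that make the argument, as written, not close.

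\textbf{First gap: the Gaussian barrier in Step~1 cannot be certified as a subsolution.} Before any positivity has been established, the only available information on the drift-diffusion coefficient is $0\le\rfp_h\le\Lambda^\b$; in particular $\rfp_h$ may vanish. For a Gaussian ansatz $\underline{h}=\zeta(t)e^{-A(t)|v-v_0|^2-\cdots}$, the quantity $\lfp\underline{h}$ changes sign in $v$: near $v_0$, $\Delta_v\underline{h}\approx -2dA\underline{h}<0$, so there $\rfp_h\lfp\underline{h}\le0$ and the \emph{worst} case for the maximum principle is $\rfp_h$ large; far from $v_0$, $\rfp_h\lfp\underline{h}\ge0$ and the worst case is $\rfp_h=0$. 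Without a pointwise \emph{lower} bound on $\rfp_h$ (which you do not have yet at $t\to0^+$), the inequality $(\p_t+v\cdot\nabla_x)\underline{h}\le\rfp_h\lfp\underline{h}$ cannot be enforced uniformly in $\rfp_h\in[0,\Lambda^\b]$. (There is also a secondary issue: the transport of $|x-x_0-tv_0|^2$ produces a cross-term $2(v-v_0)\cdot(x-x_0-tv_0)$ that your ansatz, lacking a $v$--$x$ cross-term in the exponent, does not absorb.) The paper avoids both issues in Lemma~\ref{lowerlemma1} by using a \emph{quadratic} barrier $\underline{h}=-C_0t+\tfrac{\delta}{2}(1-r^{-2}|x-x_0-tv|^2-\tau^2r^{-2}|v-v_0|^2)$, whose $\lfp\underline{h}$ is bounded in $L^\infty$, so only the upper bound $\rfp_h\le\Lambda^\b$ is needed.

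\textbf{Second gap: the Harnack-chain step count is $N\sim|v|^4$, not $|v|^2$.} When you rescale the equation around a cylinder of radius $r$, the drift becomes $r(v_i+r\tilde v)$, forcing $r\lesssim(1+|v|)^{-1}$; but you also must keep the spatial excursion $\sum_i |x_{i+1}-x_i|\approx(t-t_1)|v-v_0|$ below $R$, forcing $t-t_1\lesssim|v|^{-1}$; and $\tau_2=\frac{r\tau_1|v-v_0|}{t-t_1}$ must stay $\le1-\tau_1$, which with $t-t_1\sim|v|^{-1}$ forces $r\lesssim|v|^{-2}$. The chain length $N=(t-t_1)/(r^2\tau_1)$ then scales like $|v|^4$, not $|v|^2$. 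So this stage only delivers a lower bound of the form $e^{-C|v|^4}$, not a Gaussian. The paper therefore needs an additional Step~4 (in Appendix~\ref{lowerapp}): once positivity $h\ge\underline{c}$ on $[\underline{T},T]\times\T^d\times B_1$ has been established, one runs a Gaussian barrier $\underline{c}\,e^{-C_0(t-\underline{T})^{-1}|v|^2}$ on the exterior region $|v|\ge1$ for $t\ge\underline{T}$; \emph{there} the lower bound $\rfp_h\ge\underline{c}^\b$ is available (and $|v|\ge1$ fixes the sign), so the barrier closes and upgrades the $e^{-C|v|^4}$ tail to the claimed Gaussian $e^{-\eta_2(t)|v|^2}$. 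This upgrade step is entirely missing from your proposal and is the crux of obtaining the sharp Gaussian in \eqref{lowers0}.
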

\begin{remark}
In particular, the functions $\eta_1(t),\eta_2(t)$ are positive and bounded on any compact subset of $(0,T]$, but $\eta_1$ might degenerate to zero and $\eta_2$ may go to infinity as $t$ tends to zero or infinity.
\end{remark}

\begin{remark}\label{lowerwholeremark}
If one is concerned with the problem in the whole space, that is $\O=(0,T]\times\R^d\times\R^d$, we can proceed along the same lines of the proof in Appendix~\ref{lowerapp} to see that \eqref{lowers00} implies the following lower bound 
\begin{equation}\label{lowerwhole}
h(t,x,v)\ge \eta_1(t,x)^{-1} e^{-\eta_2(t,x)|v|^4}, 
\end{equation}
where the functions $\eta_1(t,x),\eta_2(t,x)$ on $(0,T]\times\R^d$ are positive, continuous and only depend on universal constants, $T,\delta,r$ and $v_0$. 
Compared with \eqref{lowers0}, $\eta_1(t,x),\eta_2(t,x)$ lose the uniformity in $x$ as $\R^d$ is not compact (see Step~3 of the proof of the proposition in Appendix~\ref{lowerapp}). 
Besides, the exponential tail with respect to $v$ cannot be improved to a Gaussian type, since there is no uniform-in-$x$ lower bound on the local mass $\int h\dif\m$ so that Step~4 in Appendix~\ref{lowerapp} fails. 
\end{remark}

Note that the proof of the proposition is composed mainly of two lemmas. 
On the one hand, Lemma~\ref{lowerlemma1} below extends the lower bounds forward a short time from a neighborhood of any given point in $\T^d\times\R^d$ and at any given time. 
On the other hand, Lemma~\ref{lowerlemma2} below is used to spread the lower bound for all velocities. 
The spreading of the lower bound in space is given by selecting proper velocity to transport the positivity which is guaranteed by Lemma \ref{lowerlemma1}. 
By applying these two lemmas repeatedly, as proposed in \cite{HST3}, we are able to control the solution from below for any finite time. 
We postpone the full proof of Proposition~\ref{lower}, 
obtained by combining the previously introduced lemmas, 
until Appendix~\ref{lowerapp}.

\begin{lemma}[Lower bound forward in time]\label{lowerlemma1}
Let $\delta,\tau,r\in(0,1]$ and $h$ be a bounded solution to \eqref{rnk} in $\O$ with 
\begin{equation*}
h(0,x,v)\ge\delta\mathbbm{1}_{\left\{|x-x_0|<r,\, |v-v_0|<\frac{r}{\tau}\right\}},
\end{equation*}
for some $(x_0,v_0)\in\T^d\times\R^d$. Then, there exists some universal constant $c_0>0$ such that 
\begin{equation*}
h\ge\frac{\delta}{8}\mathbbm{1}_\mathcal{P},{\quad}
\mathcal{P}:=\left\{t\le \min\big\{T,\tau,c_0\left\la\tau r^{-1}\right\ra^{-2}\!\la v_0\ra^{-2}\big\},\, |x-x_0-tv|<\frac{r}{2},\, |v-v_0|<\frac{r}{2\tau}\right\}.
\end{equation*}
\end{lemma}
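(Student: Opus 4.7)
The plan is a barrier-function argument combined with the comparison principle for the linear operator $\p_t + v\cdot\nabla_x - \rfp_h(t,x)\,\lfp$. Since $h\le\Lambda$ by Lemma~\ref{Gaussian}, the coefficient $\rfp_h$ is bounded by $\Lambda^\b$. I introduce the shifted coordinates $\xi := x-x_0-tv$ and $\omega := v-v_0$; a direct check shows $(\p_t + v\cdot\nabla_x)\xi = (\p_t + v\cdot\nabla_x)\omega = 0$, so the transport operator annihilates every function of $(\xi,\omega)$ alone. Fixing once and for all a smooth convex decreasing cutoff $\phi\in \mathcal{C}^\infty(\R_+;[0,1])$ with $\phi\equiv 1$ on $[0,\tfrac34]$ and $\phi\equiv 0$ on $[1,\infty)$, I define the lower barrier
\begin{equation*}
\underline{h}(t,x,v) := \tfrac{\d}{8}\,\phi\!\left(F(t,x,v)\right)\!,\quad F(t,x,v) := \frac{|\xi|^2}{r^2} + \frac{\t^2|\omega|^2}{r^2} + ct,
\end{equation*}
with $c := C_1\Lambda^\b \la\t r^{-1}\ra^2 \la v_0\ra^2$ for a universal $C_1$ to be chosen.

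The core step is to verify that $\underline h$ is a subsolution of the linear equation, i.e.\ $(\p_t + v\cdot\nabla_x - \rfp_h\lfp)\underline h \le 0$. The transport operator contributes only $\tfrac{\d}{8}\phi'(F)c$, while the chain rule yields $\lfp\underline h = \tfrac{\d}{8}[\phi''(F)|\nabla_v F|^2 + \phi'(F)\lfp F]$, so the subsolution inequality reduces to
\begin{equation*}
\phi'(F)\bigl[c - \rfp_h\,\lfp F\bigr] \le \rfp_h\,\phi''(F)\,|\nabla_v F|^2.
\end{equation*}
Since $\phi'\le 0$, $\phi''\ge 0$ and $\rfp_h\ge 0$, it suffices to ensure $c \ge \rfp_h\,\lfp F$ on $\{F\le 1\}$. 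A direct calculation, using $\nabla_v|\xi|^2 = -2t\xi$ and $\Delta_v|\xi|^2 = 2dt^2$, yields $\lfp F = 2d(t^2+\t^2)/r^2 + (2t/r^2)\,v\cdot\xi - (2\t^2/r^2)\,v\cdot\omega$. On $\{F\le 1\}$ one has $|\xi|\le r$, $|\omega|\le r/\t$, hence $|v|\le|v_0|+r/\t$; combining these with $t\le\t\le 1$ and the AM--GM bound $\t|v_0|/r \le \tfrac12(\t^2/r^2 + |v_0|^2)$ produces $|\lfp F| \lesssim \la\t/r\ra^2\la v_0\ra^2$, so the desired inequality holds for a sufficiently large universal $C_1$.

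It remains to compare and extract the conclusion. At $t=0$ the support of $\underline h(0,\cdot)$ is the closed ellipsoid $\{|\xi|^2/r^2+\t^2|\omega|^2/r^2\le 1\}$, which lies inside $\overline{B_r(x_0)\times B_{r/\t}(v_0)}$; there $h(0,\cdot) \ge \d \ge \d/8 \ge \underline h(0,\cdot)$, and both sides vanish on the boundary, giving $\underline h(0,\cdot)\le h(0,\cdot)$ everywhere. Since $\underline h$ is smooth with compact $v$-support, testing the inequality $(\p_t + v\cdot\nabla_x - \rfp_h\lfp)(\underline h - h)\le 0$ against $(\underline h - h)_+$ and integrating against $\dif m$, exactly as in the proof of Lemma~\ref{Gaussian} (the factor $\rfp_h\ge 0$ is harmless inside the $v$-integration by parts), yields $h \ge \underline h$ throughout $\O$. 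On $\mathcal P$ the quadratic part of $F$ is strictly less than $1/4+1/4 = 1/2$, and choosing $c_0 := 1/(4C_1\Lambda^\b)$ forces $ct \le 1/4$ whenever $t\le c_0\la\t/r\ra^{-2}\la v_0\ra^{-2}$, so $F < 3/4$, $\phi(F)=1$ and $h \ge \d/8$ on $\mathcal P$. The main technical obstacle is pinning down the correct $c$: the drift $-v\cdot\nabla_v$ inside $\lfp$ generates a contribution of order $\t|v_0|/r$ which does not vanish with $t$, and splitting it by AM--GM is precisely what forces the factor $\la v_0\ra^2$ in the time restriction defining $\mathcal P$.
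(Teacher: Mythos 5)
Your barrier is essentially the same as the paper's---expressed in the transport-invariant variables $\xi = x-x_0-tv$, $\omega = v-v_0$, quadratic in those, linear in $t$, with the size of the drift coefficient pinned down exactly as in the paper's estimate $|\lfp\underline{h}|\lesssim\delta\la\tau r^{-1}\ra^2\la v_0\ra^2$. The final numerology ($F<3/4$ on $\mathcal{P}$, the choice of $c_0$) is also correct. But there is a genuine flaw in the barrier construction.

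You require a $\phi\in\mathcal{C}^\infty(\R_+;[0,1])$ that is \emph{convex}, decreasing, $\phi\equiv 1$ on $[0,3/4]$ and $\phi\equiv 0$ on $[1,\infty)$. No such function exists: convexity means $\phi'$ is nondecreasing, and $\phi\equiv1$ on $[0,3/4]$ forces $\phi'(3/4)=0$, hence $\phi'\ge0$ on $[3/4,1]$; but $\phi$ is also decreasing so $\phi'\le0$, whence $\phi'\equiv0$ on $[3/4,1]$ and $\phi(1)=1\ne 0$. Consequently any admissible cutoff must have $\phi''<0$ somewhere in $(3/4,1)$. Your subsolution verification, which reduces to $\phi'(F)[c-\rfp_h\lfp F]\le\rfp_h\phi''(F)|\nabla_v F|^2$, uses $\phi''\ge0$ to discard the right-hand side; where $\phi''<0$ the right-hand side is negative while the left-hand side is merely nonpositive, and the inequality does not follow without an extra quantitative comparison between $|\phi''|$ and $|\phi'|$ that you have not provided. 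So the proof as written does not close.

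The repair is essentially to abandon the smooth plateau and use the (piecewise linear, convex) profile $\phi(s)=(1-s)_+$, scaling the barrier to $\frac{\delta}{2}(1-F)_+$ so that its value on $\{F<3/4\}$ is at least $\delta/8$. Then $\phi'' $ is a nonnegative Dirac mass at $s=1$, or more simply one restricts the comparison to the bounded set $\mathcal{Q}=\{F<1\}$ (bounded since $x\in\T^d$ and $|\omega|<r/\tau$ there), where the barrier vanishes on the lateral boundary, and invokes the classical maximum principle on $\mathcal{Q}$. This is precisely the paper's argument with $\underline{h}=-C_0t+\frac{\delta}{2}(1-r^{-2}|\xi|^2-\tau^2 r^{-2}|\omega|^2)$; your $L^2$-energy comparison is a valid alternative to the pointwise maximum principle once the barrier is a genuine subsolution, but it does not rescue an inadmissible cutoff.
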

	
\begin{proof}
Let us consider the barrier function $$\underline{h}(t,x,v):=-C_0t+\frac{\delta}{2}\left(1-r^{-2}|x-x_0-tv|^2-\tau^2r^{-2}|v-v_0|^2\right),$$ 
where the constant $C_0>0$ is to be determined. 
The region $\mathcal{Q}:=\big\{t\le\min\{T,\tau\},\,|x-x_0-tv|^2+\tau^2|v-v_0|^2<r^2\big\}$ contains $\mathcal{P}$. 
A direct computation yields that
\begin{equation*}
|\lfp\underline{h}|\le |\Delta_v\underline{h}|+ |v\cdot\nabla_v\underline{h}|
\lesssim \delta\left\la\tau r^{-1}\right\ra^{2}\!\la v_0\ra^2 {\quad\rm in\ }\mathcal{Q}. 
\end{equation*}
By choosing $C_0:=\frac{1}{8c_0}\delta\left\la\tau r^{-1}\right\ra^{2}\!\la v_0\ra^2$ for some (small) universal constant $c_0>0$, we have
\begin{equation}\label{hunder}
\left(\partial_t+v\cdot\nabla_x-\rfp_h\,\lfp\right)\underline{h}\le -C_0+\Lambda^\b|\lfp\underline{h}|<0 
{\quad\rm in\ }\mathcal{Q}.  
\end{equation}
Besides, $\underline{h}(t,x,v)\ge\frac{\delta}{8}$ in $\left\{t\le c_0\left\la\tau r^{-1}\right\ra^{-2}\!\la v_0\ra^{-2},\; |x-x_0-tv|^2+\tau^2|v-v_0|^2<\frac{r^2}{2}\right\}\supset\mathcal{P}$. 

Then, observing that $\underline{h}-h\le0$ on the parabolic boundary $\{t=0\}\cap\mathcal{Q}$,  $\big\{t\le\min\{T,\tau\},\,|x-x_0-tv|^2+\tau^2|v-v_0|^2=r^2\big\}$, and applying the classical maximum principle to $\underline{h}-h$ in $\mathcal{Q}$ yields the result. 
\end{proof}

The spreading of lower bound to all velocities relies on the construction of a Harnack chain through iterative application of the local Harnack inequality \cite[Theorem 1.6]{GIMV}. 
Although some coefficients of the equation~\eqref{rnk} are unbounded globally over $v\in\R^d$, we remark that their local boundedness is sufficient for us to achieve the result through a careful study on the rescaling during the construction of the Harnack chain. 

\begin{lemma}[Lower bound for all velocities]\label{lowerlemma2}
Let $\delta>0$, $T,R\in(0,1]$, $\underline{T}\in(0,T)$ and $h$ be a bounded solution to \eqref{rnk} in $\O$ such that, for any $t\in[0,T]$,  
\begin{equation}\label{lower2assumption}
h(t,x,v)\ge\delta\mathbbm{1}_{\left\{|x-x_0-tv_0|<R,\,|v-v_0|<R\right\}},
\end{equation}
for some $(x_0,v_0)\in\T^d\times\R^d$. Then, there exists some (large) constant $\underline{C}>0$ depending only on universal constants, $\underline{T},\delta,R$ and $v_0$ such that, for any $t\in[\underline{T},T]$, we have 
\begin{equation}\label{lower2bound}
h(t,x,v)\ge \underline{C}^{-1} e^{-\underline{C}|v|^4}\mathbbm{1}_{\left\{|x-x_0-tv_0|<\frac{R}{2}\right\}}. 
\end{equation}
\end{lemma}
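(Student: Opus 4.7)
The plan is to prove \eqref{lower2bound} by iteratively applying the local Harnack inequality \cite[Theorem~1.6]{GIMV} along a Harnack chain joining an arbitrary target point $z_*=(t_*,x_*,v_*)$ (with $t_*\in[\underline T,T]$, $|x_*-x_0-t_*v_0|<R/2$, $v_*\in\R^d$) to a point in the known positivity set. The Galilean translation $(t,x,v)\mapsto(t,x+tv_0,v+v_0)$ reduces matters to $v_0=0$, so that the positivity set is $\{|x-x_0|<R,\,|v|<R\}$, available at every $t\in[0,T]$.

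The first technical step is to set up the rescaled equation so that Harnack applies with universal constants despite the unboundedness of the $v\cdot\nabla_v$ term in \eqref{rnk}. At a base point $z_k=(t_k,x_k,v_k)$, define
\[\tilde h(t,x,v):=h\bigl(t_k+r_k^2 t,\,x_k+r_k^3 x+r_k^2 t v_k,\,v_k+r_k v\bigr)\quad\text{on }Q_1.\]
A direct computation gives
\[
(\p_t+v\cdot\nabla_x)\tilde h=\tilde\rfp\bigl[\Delta_v\tilde h-(r_k v_k+r_k^2 v)\cdot\nabla_v\tilde h\bigr]\quad\text{in }Q_1,
\]
with $\tilde\rfp(t,x)=\rfp_h(t_k+r_k^2 t,\,x_k+r_k^3 x+r_k^2 t v_k)\le\Lambda^\b$ by Lemma~\ref{Gaussian}. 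The choice $r_k\simeq\la v_k\ra^{-1}\wedge 1$ keeps the drift coefficient $r_k v_k+r_k^2 v$ bounded by a universal constant on $Q_1$. Provided the physical cylinder $Q_{r_k}(z_k)$ lies in a region where $\rfp_h\ge c>0$, the rescaled equation is uniformly parabolic (in the kinetic sense) with universal coefficient bounds, so \cite[Theorem~1.6]{GIMV} furnishes a Harnack constant $C_H$ independent of the base velocity $v_k$.

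I then construct a finite chain $z_0=z_*,z_1,\ldots,z_N$ with $z_N$ in the positivity set, each consecutive pair linked by a rescaled Harnack step as above. Two competing demands govern its design: crossing the velocity gap of size $|v_*|$ in increments of $r_k\sim\la v_k\ra^{-1}$ costs at least $\sim|v_*|^2$ steps when $|v_k|\sim|v_*|$; meanwhile, each step can displace the spatial coordinate by up to $r_k^2|v_k|\sim\la v_k\ra^{-1}$, which must be compensated so that $x_k$ stays close enough to $\{|x-x_0|<R\}$ for $\rfp_h$ to remain bounded below by a universal constant. Following the strategy of \cite{HST3}, one interleaves velocity progress with spatial corrections (using the freedom of direction inside each cylinder) to meet both constraints, at the price of $N\lesssim\la v_*\ra^4$ total steps; the chain also fits inside $[0,t_*]$ since $\sum_k r_k^2$ stays bounded and $t_*\ge\underline T$. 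Iterating Harnack then yields
\[h(z_*)\ge C_H^{-N}h(z_N)\ge\delta\,C_H^{-N}\ge\underline C^{-1}e^{-\underline C|v_*|^4}.\]

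The main obstacle is the chain bookkeeping: simultaneously reaching the velocity $v_*$, keeping $|x_k-x_0|\lesssim R$ so that $\rfp_h$ is bounded below at every stage, and fitting the whole chain inside the available time window. Balancing these competing constraints under the kinetic scaling is what inflates the naive quadratic velocity count into the quartic exponent in the final estimate. A secondary point is to verify that the Harnack constant from \cite[Theorem~1.6]{GIMV} survives the rescaling; this is automatic because the choice $r_k\simeq\la v_k\ra^{-1}\wedge 1$ keeps all rescaled coefficients universally bounded, so $C_H$ is independent of $v_k$ and only the step count $N$ carries the $v_*$-dependence in the exponent.
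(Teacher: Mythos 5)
Your overall strategy — iterating the kinetic Harnack inequality of \cite{GIMV} along a chain of rescaled cylinders, with the drift made universally bounded by choosing the radius to depend on the base velocity — is exactly the paper's, and the final heuristic ``$N\lesssim\la v_*\ra^4$ steps gives $e^{-C|v_*|^4}$'' is also right. But the central technical content of the lemma, namely the actual construction of a chain satisfying all three constraints at once, is left as an assertion, and the parameter choice you indicate would in fact violate two of those constraints.

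Concretely: with $r_k\simeq\la v_k\ra^{-1}$, the velocity ladder alone already costs $\sim\la v_*\ra^2$ steps, each lasting a time $\sim r_k^2\simeq\la v_k\ra^{-2}$, so the total time spent is $\sum_k r_k^2\simeq\log\la v_*\ra$, which is \emph{not} bounded: for fixed $t_*\ge\underline T$ the chain does not fit into $[0,t_*]$ once $|v_*|$ is large. The spatial bookkeeping fails too: in the frame co-moving with $v_0$, the forced displacement per Harnack step is $\sim r_k^2|v_k-v_0|\simeq\la v_k\ra^{-1}$, always with the same sign as $v_k-v_0$; over the dyad $|v_k|\sim 2^j$ this accumulates to $\sim 2^j$, so the total drift is $\sim|v_*|$, vastly exceeding $R$. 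You acknowledge this and invoke ``interleaving with spatial corrections'' to inflate $N$ to $\la v_*\ra^4$, but correction moves in kinetic cylinders necessarily cost both time and velocity displacement, and no construction is given. Asserting that a working interleaving exists, and that after it both $\sum r_k^2$ and $N$ land exactly where you need them, is precisely the missing proof.

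The paper avoids all of this with a simpler ansatz that you should compare against: it takes a \emph{single, uniform} radius $r=\frac{R}{M}(1+|v_0|+|v-v_0|)^{-2}$ (not $\la v_k\ra^{-1}$) and a late starting time $t_1=\max\{\underline T/2,\,t-\frac{R}{8}(1+|v_0|+|v-v_0|)^{-1}\}$, then walks straight from $(t_1,x_1,v_0)$ to $(t,x,v)$ in $N=(t-t_1)/(r^2\tau_1)$ equal steps with velocity increment $r\tau_2=|v-v_0|/N$. The drift bound $r|v_k|\le r(1+|v_0|+|v-v_0|)\le 1$ holds automatically; the total spatial drift in the moving frame is $N^2r^3\tau_1\tau_2=(t-t_1)|v-v_0|\le R/8$, controlled by the \emph{shortness} of $t-t_1$ rather than by any correction moves; and $N\lesssim(1+|v_0|+|v-v_0|)^4$ drops out of the same algebra. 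So the paper never needs to balance competing constraints step by step — the tiny uniform $r$ and the short time window make all three constraints compatible at once. Your proposal replaces this clean choice with a more ``efficient''-looking variable-radius chain, but that efficiency is illusory once the spatial and time budgets are accounted for, and the repair you gesture at is not supplied.
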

	
\begin{proof}
For any $z:=(t,x,v)\in\big\{t\in[\underline{T},T],\,|x-x_0-tv_0|<\frac{R}{2},\,v\in\R^d\big\}$, we will construct a finite sequence of points to reach $z$ from the region $\big\{t\le{T},\,|x-x_0-tv_0|<R,\,|v-v_0|<R\big\}$ where the solution is positive by the assumption. 
In particular, $x$ does not exit this region. 
The nonlocality of the coefficient $\rfp_h$, with the assumption \eqref{lower2assumption}, implies the nondegeneracy of the diffusion in velocity so that the positivity of the solution $h$ propagates over $v\in\R^d$ in a localized space region. 

\medskip\noindent{\textbf{Step 1.}}	Iterate the Harnack inequality. \\
For $i\in\{1,2,...,N+1\}$ with $N\in\N$, we define $z_{N+1}:=z$ and $z_{i}:=(t_i,x_i,v_i)$ by the relation 
$$z_i=z_{i+1}\circ S_r\left(-\t_1,0,-\t_2\frac{v-v_0}{|v-v_0|}\right),$$  
where the parameters $N,r,\tau_1,\tau_2>0$ are to be determined next. 	
Consider the function for $\tilde{z}:=(\tilde{t},\tilde{x},\tilde{v})\in Q_1$, 
\begin{equation*}
h_i(\tilde{z}):=h\left(z_i\circ S_r(\tilde{z})\right)
=h(t_i+r^2\tilde{t},\,x_i+r^3\tilde{x}+r^2\tilde{t}v_i,\,v_i+r\tilde{v}).
\end{equation*} 
We observe that if the following is true for any $\tilde{z}\in Q_1$ 
\begin{align}
&\;t_{i+1}+r^2\tilde{t}\in[0,T],\quad Nr\tau_2\le |v-v_0|, \label{htxv21}\\
&\left|x_{i+1}+r^3\tilde{x}+r^2\tilde{t}v_{i+1}-x_0-\left(t_{i+1}+r^2\tilde{t}\right)v_0\right|<R,\label{htxv22}
\end{align}
then the function $h_{i+1}(\tilde{z})$, for $1\le i\le N$, verifies the equation 
\begin{equation*}
\left(\partial_{\tilde{t}}+\tilde{v}\cdot\nabla_{\tilde{x}}\right)h_i
=\rfp_h\left(z_i\circ S_r(\tilde{z})\right)\, \left(\Delta_{\tilde{v}}h_i-r(v_i+r\tilde{v})\cdot\nabla_{\tilde{v}}h_i\right) {\rm\quad in\ } Q_1, 
\end{equation*}
where the coefficients satisfy 
		\begin{equation*}
		\delta^\b R^{d\b}\lesssim\rfp_h\lesssim 1  {\quad\rm and\quad}
		|r(v_i+r\tilde{v})|\le r(1+|v_0|+|v-v_0|)\le 1, 
		\end{equation*}
		provided that $r\le(1+|v_0|+|v-v_0|)^{-1}$. 
Applying the Harnack inequality \cite[Theorem 1.6]{GIMV} to $h_i$ implies that there exist constants $c_0,\tau_1\in(0,1)$ depending only on universal constants, $\delta$ and $R$ such that, for any $\tau_2\in[0,1-\t_1]$ and $1\le i\le N$, we have
\begin{equation}\label{hharnack}
h(z_{i+1})=h_{i+1}(0,0,0)\ge c_0h_{i+1}\Big(-\tau_1,\,0,\,-\tau_2\frac{v-v_0}{|v-v_0|}\Big)=c_0h(z_i).  
\end{equation}
Hence, it remains to determine the chain $\{z_i\}_{1\le i\le N+1}$ such that the conditions \eqref{htxv21}, \eqref{htxv22} hold and the point $z_1$ stays in the region $\big\{(t,x,v):t\le{T},\,|x-x_0-tv_0|<R,\,|v-v_0|<R\big\}$. 

\medskip\noindent{\textbf{Step 2.}}	Determine the Harnack chain (including $N,r,\tau_2$) from a proper starting time $t_1$.\\ 
For $M>0$, we set 
\begin{equation*}
t_1:=\max\left\{\frac{\underline{T}}{2},\,t-\frac{R}{8}(1+|v_0|+|v-v_0|)^{-1}\right\}
{\quad\rm and\quad}
r:=\frac{R}{M}(1+|v_0|+|v-v_0|)^{-2}.
\end{equation*}
Recalling that $T,R\in(0,1]$, by choosing 
$M\ge \frac{2}{\underline{T}} +\frac{\t_1}{1-\t_1}\left(8+\frac{2}{\underline{T}}\right)$, we have 
\begin{equation*}
r^2\le\frac{\underline{T}}{2} {\quad \rm and\quad}
\tau_2:=\frac{r\tau_1|v-v_0|}{t-t_1}\le 1-\t_1. 
\end{equation*}
To determine the parameter $M>0$, 
we point out that there exists some constant $\overline{C}$ depending only on universal constants, $\underline{T},\delta,R$ and $v_0$ such that $M\le\overline{C}$ and 
\begin{equation*}
N:=\frac{t-t_1}{r^2\tau_1}\in\N^+. 
\end{equation*}
Thus, $Nr\tau_2=|v-v_0|$. This setting then guarantees the condition \eqref{htxv21}. 

It also follows from the iteration relation that $v_1=v_0$, and for $1\le i\le N+1$,  
\begin{equation}\label{iterationrelation}
t_i=t_1+(i-1)r^2\t_1,\quad 
v_{i}=v_0+(i-1)r\t_2\frac{v-v_0}{|v-v_0|},\quad 
x_{i}=x-r^2\tau_1\sum\nolimits^{N}_{j=i}v_{j+1}. 
\end{equation}

\medskip\noindent{\textbf{Step 3.}} Determine the starting point $x_1$.\\	
For any $1\le i\le N$, we estimate the departure distance from the expression \eqref{iterationrelation}, 
\begin{equation*}
\left|x_{i+1}-x_1-(t_{i+1}-t_1)v_0\right| =\frac{i(i+1)}{2} r^3\tau_1\tau_2
\le N^2 r^3\tau_1\tau_2 =(t-t_1)|v-v_0|\le\frac{R}{8}. 
\end{equation*}
Therefore, for any $x\in B_\frac{R}{2}(x_0+tv_0)$, there exists some $x_1\in B_{\frac{5R}{8}}(x_0+t_1v_0)$ such that $x_{N+1}=x$. 
In this setting, for any $1\le i\le N$, we also have 
\begin{align*}
&\left|x_{i+1}+r^3\tilde{x}+r^2\tilde{t}v_{i+1}-x_0-\left(t_{i+1}+r^2\tilde{t}\right)v_0\right|\\
&\le \left|x_{i+1}-x_1-(t_{i+1}-t_1)v_0\right|+|x_1-x_0-t_1v_0| +r^2|r\tilde{x}+\tilde{t}v_{i+1}-\tilde{t}v_0|\\
&\le \frac{R}{8}+\frac{5R}{8}+r^2(1+|v-v_0|)<\frac{3R}{4}+\frac{R^2}{M^2}<R. 
\end{align*}
Thus, the condition~\eqref{htxv22} ensuring the inequality~\eqref{hharnack} is satisfied for $1\le i\le N$, which yields 
\begin{equation*}
h({t},{x},{v})\ge c_0^Nh(t_1,x_1,v_0) \ge \d e^{-N\log \frac{1}{c_0}}. 
\end{equation*}
Recalling that $c_0\in(0,1)$ appears in \eqref{hharnack} and  $N\le\frac{T\overline{C}^2\left(1+|v_0|+|v-v_0|\right)^4}{\t_1R^2}$, we obtain the desired result \eqref{lower2bound}. 
\end{proof}

\subsection{Existence and uniqueness}\label{wellsection}
Let us begin by summarizing some basic a priori estimates for solutions to the  equation~\eqref{gnk}. 
\begin{lemma}[H\"older estimates]\label{boundschauder} 
Let $\O_x=\T^d$ or $\R^d$, and $g$ be a solution to \eqref{gnk} in $[0,T]\times\O_x\times\R^d$ satisfying 
\begin{equation*}
\RO[g]\ge\lambda  {\quad\rm in\ } [0,T]\times\O_x {\quad\rm and\quad}
0\le g_{\rm in}\le\Lambda\m^{\frac{1}{2}} {\quad\rm in\ }\O_x\times\R^d. 
\end{equation*}
\begin{item}
\item [ {\bf\ \,(i)}] 
Let $\underline{T}\in(0,T)$ and $\d\in\big(0,\frac{1}{2}\big)$. 
There exists some universal constant $\a\in(0,1)$ such that, 
for any $Q_{2r}(z_{0})\subset[\underline{T},T]\times\O_x\times\R^d$, we have 
\begin{equation}\label{schauder1}
\| g \|_{\Ck^{2+\a}(Q_{r}(z_{0}))} \lesssim_{\underline{T},\d} \m^{\d} (v_{0}).
\end{equation}

\item [ {\bf\;(ii)}] 
If $g_{\rm in}\in {\mathcal C}^{\a_0}(\O_x\times\R^d)$ with (universal) $\a_0\in(0,1)$, 
then for any $\d\in\big(0,\frac{1}{2}\big)$, there exists some universal constant $\alpha\in(0,1)$ such that 
\begin{equation*}
\|g\|_{\Ck^\a\left([0,T]\times\O_x\times B_1(v_0)\right)}
\lesssim_\d \big(1+[g_{\rm in}]_{{\mathcal C}^{\a_0}(\O_x\times\R^d)}\big) \m^\d(v_0).
\end{equation*}
\end{item}
\end{lemma}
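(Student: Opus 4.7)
The plan is to view \eqref{gnk} as a linear Kolmogorov-type equation for $g$ with coefficients and source determined by $g$ itself, and to bootstrap between the De Giorgi--Nash--Moser Hölder estimate of \cite{GIMV} and the interior Schauder estimate of Proposition~\ref{interiorschauder}. Writing the equation as
\begin{equation*}
(\p_t+v\cdot\nabla_x)g-\RO[g]\Delta_v g=\RO[g]\Big(\tfrac{d}{2}-\tfrac{|v|^2}{4}\Big)g=:s,
\end{equation*}
the diffusion coefficient $\RO[g]$ lies in $[\lambda,C_\Lambda]$: the lower bound is assumed, while the upper bound follows from $g\le\Lambda\m^{1/2}$, obtained by applying Lemma~\ref{Gaussian} to $h:=\m^{-1/2}g$. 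The crucial quantitative observation is that for any $\d\in(0,1/2)$ and $|v-v_0|<2r\le2$ one has $\m^{1/2}(v)\le C_{r,\d}\,\m^\d(v_0)$ (the linear-in-$|v_0|$ defect in the exponent is absorbed by the strictly weaker Gaussian weight), so $g\le\Lambda\m^{1/2}$ together with the polynomial growth of $(d/2-|v|^2/4)$ produce
\begin{equation*}
\|g\|_{L^\infty(Q_{2r}(z_0))}+\|s\|_{L^\infty(Q_{2r}(z_0))}\lesssim_{r,\d}\m^\d(v_0).
\end{equation*}

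For part (i), the Hölder estimate of \cite[Theorem 1.6]{GIMV} applied on $Q_{2r}(z_0)$ (whose hypotheses are fulfilled by uniform ellipticity of $A=\RO[g]I$ and boundedness of $s$) yields a universal exponent $\a_1\in(0,1)$ with $[g]_{\Ck^{\a_1}(Q_r(z_0))}\lesssim_{r,\d}\m^\d(v_0)$. I then upgrade the non-local coefficient $\RO[g]=F^\b$ with $F:=\int g\,\m^{1/2}\dif v\ge\lambda^{1/\b}$: since $z\mapsto z^\b$ is locally Lipschitz on the range of $F$, controlling $\RO[g]$ in $\Ck^{\a_1}$ (in the $(t,x)$ variables) reduces to controlling $F$, and splitting the integral defining $F$ into $\{|v|\le R\}$ and its tail handles this using respectively the uniform-in-$v_0$ Hölder bound just obtained (with $v_0$ ranging over $B_R$) and the pointwise decay $g\le\Lambda\m^{1/2}$ in the tail. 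Consequently $s\in\Ck^{\a_1}(Q_{2r}(z_0))$ with the same Gaussian decay, and the interior Schauder estimate \eqref{schauderIM} delivers $\|g\|_{\Ck^{2+\a_1}(Q_r(z_0))}\lesssim_{r,\d}\m^\d(v_0)$, proving (i) after an arbitrarily small adjustment of $\d$.

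For part (ii), the interior estimate from (i) already handles points at positive distance from $\{t=0\}$; it remains to produce a Hölder estimate up to the initial time, for which I would rely on a comparison with barriers built from $g_{\rm in}$. Namely, for $z_0=(t_0,x_0,v_0)$ near $\{t=0\}$, an upper barrier of the form
\begin{equation*}
\overline{g}(t,x,v):=g_{\rm in}(x_0,v_0)+[g_{\rm in}]_{\Ck^{\a_0}}\|(0,x-x_0,v-v_0)\|^{\a_0}+C_\ast\,t^{\a_0/2},
\end{equation*}
(with a suitably regularised power near $v=v_0$ so that $\UO[\overline{g}]$ stays bounded) and its lower analogue, with $C_\ast$ large enough that $(\p_t+v\cdot\nabla_x)\overline{g}-\RO[g]\UO[\overline{g}]\ge 0$ on a suitable cylinder, sandwich $g$ by the comparison principle used in the proof of Lemma~\ref{Gaussian}, which yields a Hölder modulus at the initial boundary proportional to $[g_{\rm in}]_{\Ck^{\a_0}}$. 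Patching this boundary estimate with the interior bound from (i) through a standard case analysis (whether $Q_r(z_0)$ meets $\{t=0\}$ or not) proves the claim. The main obstacle is the interlocked nature of the bootstrap: the Schauder estimate demands Hölder regularity of $\RO[g]$, which itself requires the uniform-in-$v_0$ Hölder estimate of $g$ with Gaussian decay, so the three steps (De Giorgi, non-local coefficient upgrade, Schauder) must be organised in the correct order; the polynomial-in-$|v_0|$ losses stemming from the growing zero-order term $(d/2-|v|^2/4)g$ are each time absorbed by the freedom $\d<1/2$.
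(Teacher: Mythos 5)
For part (i), your plan --- kinetic De Giorgi estimate from \cite{GIMV} yielding a universal $\Ck^{\a_1}$ bound, then a H\"older upgrade of the nonlocal coefficient $\RO[g]$ exploiting the strict Gaussian gain from $\d<\tfrac12$, then the interior Schauder estimate of Proposition~\ref{interiorschauder} --- is exactly the argument behind \cite[Prop.~4.4]{IM}, which the paper invokes by citation without reproducing it, and the ordering of these steps (one must \emph{not} try Schauder before one has a H\"older coefficient) is the crucial point that you identified correctly. The key observations --- preservation of $g\le\Lambda\m^{1/2}$ from Lemma~\ref{Gaussian}, absorption of the $O(|v_0|)$ exponent defect and of the quadratic zero-order term into the slack $\m^{\d-1/2}(v_0)$ --- are sound. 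Only the coefficient-upgrade step deserves a slightly different presentation: the kinetic modulus of $F(t,x)=\int g\,\m^{1/2}\dif v$ at a point $(t,x,v)$ carries a $v$-dependent Galilean shear, so it is cleaner to integrate the $v$-uniform seminorm bound against $\m^{1/2}\dif v$, letting the Gaussian kill the $|v|^{\a_1/3}$ growth of the kinetic distance, than to truncate at a fixed $R$.

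For part (ii), however, the proposed barrier
\begin{equation*}
\overline{g}(t,x,v)=g_{\rm in}(x_0,v_0)+[g_{\rm in}]_{\Ck^{\a_0}}\|(0,x-x_0,v-v_0)\|^{\a_0}+C_\ast\,t^{\a_0/2}
\end{equation*}
is not a valid supersolution on a region whose parabolic boundary it controls, for three concrete reasons. (a) Since you centre the barrier at $x-x_0$ rather than the Galilean-shifted $x-x_0-tv$, the transport term $v\cdot\nabla_x\overline{g}$ does not cancel; moreover $\nabla_x|x-x_0|^{\a_0/3}$ is unbounded near $x_0$, so this term blows up --- you flag the $v$-singularity but not this one, and regularising only in $v$ leaves the transport contribution uncontrolled. (b) On the lateral boundary $\partial B_\d(x_0,v_0)$ the barrier is $\approx g_{\rm in}(x_0,v_0)+[g_{\rm in}]_{\Ck^{\a_0}}\d^{\a_0}$, which need not dominate $g\le\Lambda\m^{1/2}(v)$ there (e.g.\ when $g_{\rm in}(x_0,v_0)$ is small); so the order $\overline g\ge g$ on the parabolic boundary is not secured. (c) The comparison principle needed is the pointwise Lemma~\ref{max}, not the $L^2$ Stampacchia argument of Lemma~\ref{Gaussian}. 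The remedy --- the one the paper effectively uses when deriving \eqref{coninitialbarrier} --- is to take a smooth \emph{quadratic} barrier $C_1t+C_2\big(|x-x_0-tv|^2+|v-v_0|^2\big)$ in the Galilean-invariant variables, on a cylinder of fixed radius $\d$ with $C_2\sim\d^{-2}$ chosen to dominate $g$ on the lateral boundary, add the initial oscillation $\sup_{B_\d}|g_{\rm in}-g_{\rm in}(x_0,v_0)|$ as a constant, and then optimise over $\d$ as a function of the distance of $z_0$ from $\{t=0\}$; the $\a$-modulus is read off from the oscillation term, not built into the barrier. Your final patching with the interior estimate of (i) is then the right closing step.
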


We remark that, armed with Lemma~\ref{Gaussian}, the assertions (i) and (ii) in the above lemma directly follow from \cite[Proposition 4.4]{IM} and \cite[Corollary 4.6]{YZ}, respectively.

\begin{proposition}[Existence]\label{existence}
For any $g_{\rm in}\in{\mathcal C}^0(\T^d\times\R^d)$ such that $0\le g_{\rm in}\le\Lambda\m^\frac{1}{2}$ in $\T^d\times\R^d$, there exists a (classical) solution $g$ to the Cauchy problem~\eqref{gnk} satisfying $0\le g\le\Lambda\m^\frac{1}{2}$ in $\O$. 
\end{proposition}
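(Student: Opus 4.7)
My plan is to prove Proposition~\ref{existence} by a Schauder fixed point argument built on top of the linear theory from Proposition~\ref{variable}, after first regularising the initial data so that the drift-diffusion coefficient $\rfp$ is bounded away from zero. It is convenient to work with the substitution $h:=\m^{-1/2}g$, so that \eqref{gnk} takes the form \eqref{rnk}: the linearised operator $\rfp_h\,\lfp h=\rfp_h(\Delta_v h-v\cdot\nabla_v h)$ then has no zeroth-order term, and its drift $\rfp_h v$ grows only linearly in $v$, which fits into the weaker coefficient condition~\eqref{schaudercondition'}.

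First I would mollify $g_{\rm in}$ and add an $\e\m^{1/2}$-shift to obtain $g_{\rm in}^\e\in\Ck^{\alpha_0}(\T^d\times\R^d)$ with $\e\m^{1/2}\le g_{\rm in}^\e\le(\Lambda+\e)\m^{1/2}$ and $g_{\rm in}^\e\to g_{\rm in}$ uniformly; equivalently, $\e\le h_{\rm in}^\e\le\Lambda+\e$. For each fixed $\e>0$, I would set up the fixed point on the closed convex set of admissible scalar diffusion coefficients
\[
\mathcal{X}_\e:=\big\{\tilde a\in \Ck^{\alpha}([0,T]\times\T^d)\,:\,\e^\beta\le\tilde a\le(\Lambda+\e)^\beta,\ [\tilde a]_{\Ck^{\alpha}([0,T]\times\T^d)}\le K\big\},
\]
for a universal exponent $\alpha\in(0,1)$ and a constant $K$ supplied below by Lemma~\ref{boundschauder}(ii). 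Given $\tilde a\in\mathcal{X}_\e$, Proposition~\ref{variable} furnishes a unique bounded classical solution $h$ of $(\p_t+v\cdot\nabla_x)h=\tilde a\,\lfp h$ with $h(0,\cdot,\cdot)=h_{\rm in}^\e$; Lemma~\ref{Gaussian} keeps $\e\le h\le\Lambda+\e$. Define $F(\tilde a)(t,x):=\big(\int_{\R^d} h\,\dif\m\big)^\beta$. The bounds $\e^\beta\le F(\tilde a)\le(\Lambda+\e)^\beta$ are immediate; integrating the $v$-localised Hölder bound from Lemma~\ref{boundschauder}(ii), suitably translated to $h$, against $\dif\m$ yields $[F(\tilde a)]_{\Ck^{\alpha}}\le K$ independently of $[\tilde a]_{\Ck^\alpha}$, so $F$ sends $\mathcal{X}_\e$ into itself. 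Continuity of $F$ in the $C^0$ topology follows from a standard linear stability/Duhamel estimate for two linear problems with close scalar coefficients, and precompactness of $F(\mathcal{X}_\e)$ in $C^0$ follows from the uniform Hölder bound via Arzelà--Ascoli. A Schauder fixed point $\tilde a_\e$ of $F$ yields a classical solution $h^\e$ of \eqref{rnk}, hence $g^\e=\m^{1/2}h^\e$ of \eqref{gnk}, with initial datum $g_{\rm in}^\e$.

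To pass to the limit $\e\to 0$, Lemma~\ref{boundschauder}(i) gives local equicontinuity of $\{g^\e\}$ on compact subsets of $(0,T]\times\T^d\times\R^d$, provided a positive lower bound on $\r_{g^\e}$ is available uniformly in $\e$ on such subsets. When $g_{\rm in}\not\equiv 0$, this is supplied by Proposition~\ref{lower}; the degenerate case $g_{\rm in}\equiv 0$ admits the trivial classical solution $g\equiv 0$. Uniform continuity up to $t=0$ is transferred from $g_{\rm in}^\e\to g_{\rm in}$ via a maximum-principle comparison with a barrier built from the modulus of continuity of $g_{\rm in}$. Extracting a subsequence, one obtains a limit $g$ with $0\le g\le\Lambda\m^{1/2}$; Lemma~\ref{boundschauder}(i) upgrades the convergence to $\Ck^{2+\alpha}_{\rm loc}$ in the interior, so that $\RO[g^\e]\,\UO[g^\e]\to\RO[g]\,\UO[g]$ pointwise and the limit solves \eqref{gnk} classically.

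The main obstacle is designing the fixed point set so that it is genuinely invariant: the output $F(\tilde a)$ must have Hölder norm controlled by a constant $K$ independent of $[\tilde a]_{\Ck^\alpha}$. This is precisely what Lemma~\ref{boundschauder}(ii) delivers, since its constants depend only on universal quantities and on the Hölder norm of the initial datum, not on the regularity of the input coefficient $\tilde a$. A secondary difficulty is the degeneracy of the ellipticity constant $\e^\beta\to 0$, which prevents uniform $\Ck^{2+\alpha}$ bounds over the full domain $\O$ and forces the use of Proposition~\ref{lower} to restore ellipticity in the interior when extracting the limit.
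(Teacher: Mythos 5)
Your proposal is correct and reproduces the paper's overall strategy: regularize the initial data by mollification plus an $\e\m^{1/2}$ shift to restore uniform ellipticity, run a Schauder fixed point at each fixed $\e$, then pass to the limit $\e\to0$ using the self-generating lower bound (Proposition~\ref{lower}) for interior compactness and a barrier comparison for continuity at $t=0$. The genuinely different choice is the parametrization of the fixed point: you take the map $F$ acting on the scalar diffusion coefficient $\tilde a\in\Ck^\a([0,T]\times\T^d)$, whereas the paper iterates on the full unknown $w\in\Ck^\g(\overline\O)$ in the convex set $\K$ of Gaussian-weighted functions, with the coefficient $\RO[w]$ recovered as a byproduct. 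The two are mathematically equivalent, since $\RO[w]$ is a function of $(t,x)$ only with the same bounds as your $\tilde a$, but your set $\mathcal{X}_\e$ lives over a smaller base, which makes the invariance step more transparent: the output Hölder seminorm of $F(\tilde a)$ is controlled by constants that do not see $[\tilde a]_{\Ck^\a}$, exactly because the underlying De Giorgi-type estimate depends only on the ellipticity bounds of the coefficient. The paper's parametrization, on the other hand, matches more closely the form in which Lemma~\ref{boundschauder} is stated. One small precision you should make: Lemma~\ref{boundschauder}(ii) as stated applies to solutions of the nonlinear equation~\eqref{gnk}; to justify the invariance $F:\mathcal{X}_\e\to\mathcal{X}_\e$ for the \emph{linear} auxiliary problem with coefficient $\tilde a$, you need to invoke the underlying linear Hölder estimate \cite[Corollary 4.6]{YZ} directly, exactly as the paper does in Step~1 of its proof. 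Your acknowledgements of the $\e$-dependence of the fixed-point constants (both the Hölder exponent and the bound $K$ degenerate with the ellipticity ratio $((\Lambda+\e)/\e)^\b$) and of the trivial case $g_{\rm in}\equiv 0$ are exactly the right caveats, and the limit passage (interior $\Ck^{2+\a}$ convergence from Lemma~\ref{boundschauder}(i) plus barrier for $t=0$) matches the paper in substance.
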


\begin{remark}\label{remark-weak}
For any given nonnegative continuous function $g_{\rm in}$ that is not identically zero, there is some point $(x_0,v_0)\in\T^d\times\R^d$ and some constants $\delta,r>0$ such that 
\begin{equation}\label{lowerdd}
g_{\rm in}(x,v)\ge\delta\mathbbm{1}_{\{|x-x_0|<r,|v-v_0|<r\}} 
{\quad\rm in\ }\T^d\times\R^d. 
\end{equation} 
We will see that the upper bound $g_{\rm in}\le\Lambda\m^\frac{1}{2}$ and the lower bound \eqref{lowerdd} assumptions on the initial data $g_{\rm in}$ (could be discontinuous) are sufficient to ensure the existence of solution $g\in\Ckin^2(\O)$ in the weak sense that for any $\phi\in {\mathcal C}^\infty_c([0,T)\times\T^d\times\R^d)$, 
\begin{equation}\label{existence0}
	\int_{\T^d\times\R^d} g_{\rm in}\,\phi\big|_{t=0}
	=\int_{\O}\left\{-g\left(\p_t+v\cdot\nabla_x\right)\phi
	+\RO[g]\nabla_vg\cdot\nabla_v\phi-\RO[g]\left(\frac{d}{2}-\frac{|v|^2}{4}\right)g\phi\right\}.
\end{equation}
As solutions become regular instantaneously, the difference between the weak solution and the classical one lies only in the continuity around the initial time. 
\end{remark}

\begin{proof}
Let us assume that $g_{\rm in}$ satisfies \eqref{lowerdd}, for some point $(x_0,v_0)\in\T^d\times\R^d$ and some constants $\delta,r>0$. 
By Proposition~\ref{lower}, for any solution $g$ to \eqref{gnk} and for any $\underline{T}\in(0,T)$, there is some $\lambda_*>0$ depending only on universal constants, $\underline{T},T,\delta,r$ and $v_0$ such that, 
\begin{equation}\label{exist-lower}
\RO[g](t,x)\ge \lambda_*
{\quad\rm in\ }[\underline{T},T]\times\T^d. 
\end{equation}	

\medskip\noindent{\textbf{Step 1.}}	
We first approximate the initial data $g_{\rm in}$ by 
$g_{\rm in}^\e:=g_{\rm in}*\varrho_\varepsilon+\varepsilon\m^{\frac{1}{2}}$, where
$\varrho_\varepsilon(x,v):=\frac{1}{\varepsilon^{2d}}\varrho_1\left(\frac{x}{\varepsilon},\frac{v}{\varepsilon}\right)$ 
with $(x,v)\in\T^d\times\R^d$, $\varepsilon\in(0,1]$, and $\varrho_1\in {\mathcal C}^\infty_c\left(B_1\times B_1\right)$ is a nonnegative bump function such that $\int_{\mathbb{R}^{2d}}\varrho_1=1$. 
Then, we have $\varepsilon\mu^{\frac{1}{2}}\le g_{\rm in}^\varepsilon\le(1+\Lambda)\mu^{\frac{1}{2}}$ in $\T^d\times\R^d$.

Let us fix $\varepsilon\in(0,1]$. 
In order to establish the existence of solution to \eqref{gnk} associated with the initial data $g_{\rm in}^\varepsilon$, 
we are going to find a fixed point of the mapping $F:w\mapsto{g}$ defined by solving the Cauchy problem, 
\begin{align}\label{nonexun}
&\left\{ 
\begin{aligned}
\ &\left(\p_t+v\cdot\nabla_x\right)g
= \RO[w]\,\UO[g]
{\ \ \rm in\ } \O, \\
\ &\; {g}(0,\cdot,\cdot)=g_{\rm in}^\varepsilon {\ \ \rm in\ }\T^d\times\R^d,\\
\end{aligned}
\right. 
\end{align} 
on the closed convex subset $\K$ of the Banach space $\Ck^\g(\overline{\O})$, 
\begin{equation*}
\K:=\Big\{
w\in\Ck^{\g}(\overline{\O})
:\,\|w\|_{\Ck^{\g}(\O)}\le\mathcal{N},\ \;
\varepsilon\mu^{\frac{1}{2}}\le w\le(1+\Lambda)\mu^{\frac{1}{2}}{\ \ \rm in\ }\overline{\O}
\Big\}, 
\end{equation*}	
where the constants ${\g}\in(0,1)$ and $\mathcal{N}>0$ are to be determined. 
We remark that the equation~\eqref{nonexun} is equivalent to 
\begin{equation*}
\left(\p_t+v\cdot\nabla_x\right)\big(\m^{-\frac{1}{2}}g\big)
= \RO[w]\,\lfp\big(\m^{-\frac{1}{2}}g\big). 
\end{equation*}
By Lemma~\ref{Gaussian} and the fact that $\RO\left[w\right]\ge \varepsilon$, we have  $\varepsilon\mu^{\frac{1}{2}}\le{g}\le(1+\Lambda)\mu^{\frac{1}{2}}$ in $\overline{\O}$. 
In particular, the lower order term $\big|\RO\left[w\right]\big(\frac{d}{2}-\frac{|v|^{2}}{4}\big)g\big|\lesssim1$ for any $w\in\K$. 
Thus, the global H\"older estimate \cite[Corollary 4.6]{YZ} implies that
there exist some constants $\g\in(0,1)$ and $\mathcal{N}>0$ depending only on universal constants and $\varepsilon$ such that $\|{g}\|_{\Ck^{2\g}(\O)}\le\mathcal{N}$, which also implies that the lower order term $\big|\RO\left[w\right]\big(\frac{d}{2}-\frac{|v|^{2}}{4}\big)g\big|$ is bounded in $\Ck^{2\g}(\O)$. 
It then follows from Proposition~\ref{variable} with the interior Schauder estimate (Proposition~\ref{interiorschauder}) that the mapping $F:\K\rightarrow\K\cap\Ck^{2\g}(\overline{\O})\cap\Ck^{2+2\g}(\O)$ is well-defined. 
Besides, with the help of the Arzelà–Ascoli theorem, we know that $F(\K)$ is precompact in $\Ck^\g(\overline{\O})$. 

As far as the continuity of $F$ is  concerned, we take a sequence $\{w_n\}$ converging to $w_\infty$ in $\Ck^\g(\overline{\O})$. 
Since $\{F(w_n)\}$ is precompact in $\Ck^\g(\overline{\O})$, there exists a converging subsequence whose limit is ${g}_\infty\in\Ck^\g(\overline{\O})$ which satisfies 
${g}_\infty(0,\cdot,\cdot)=g_{\rm in}^\varepsilon$ in $\T^d\times\R^d$. 
In view of the interior Schauder estimate (Proposition~\ref{interiorschauder}), $\{F(w_n)\}$ is precompact in $\Ckin^2(K)$ for any compact subset $K\subset\O$ and ${g}_\infty\in\Ckin^2(\O)\cap\mathcal{C}^0(\overline{\O})$. 
Sending $n\rightarrow\infty$ in \eqref{nonexun} satisfied by $(w,{g})=(w_n,F(w_n))$, we see that the equation \eqref{nonexun} also holds for the couple of limits $(w,{g})=(w_\infty,{g}_\infty)$. 
Then, applying the maximum principle (Lemma~\ref{max}) to 
\begin{align*}
&\left\{ 
\begin{aligned}
\ &\left(\p_t+v\cdot\nabla_x\right)\left(\m^{-\frac{1}{2}}(g_\infty-F(w_\infty))\right)
=\RO[w_\infty]\,\lfp \left(\m^{-\frac{1}{2}}\left(g_\infty-F(w_\infty)\right)\right)
{\quad\rm in\ }\O, \\
\ &\; (g_\infty-F(w_\infty))(0,\cdot,\cdot)=0 {\ \ \rm in\ }\T^d\times\R^d,\\
\end{aligned}
\right. 
\end{align*} 
we arrive at ${g}_\infty=F(w_\infty)$. 

Then for every $\varepsilon\in(0,1]$, we are allowed to apply the Schauder fixed point theorem (see for instance \cite[Corollary 11.2]{GilbargTrudinger}) to get $g^\varepsilon\in\Ckin^2(\O)\cap\mathcal{C}^0(\overline{\O})$ such that $F(g^\varepsilon)=g^\varepsilon$, which is a (classical) solution to \eqref{gnk} associated with the initial data $g_{\rm in}^\varepsilon$. 
		
\medskip
\noindent{\textbf{Step 2.}}	Passage to the limit.\\
Recalling the lower bound~\eqref{exist-lower} on the coefficient and the higher order H\"older estimate given by Lemma~\ref{boundschauder}~(i), we point out that for any $\underline{T}\in(0,T)$, $\{g^\varepsilon\}$ is uniformly bounded in $\Ck^{2+\a_*}([\underline{T},T]\times\T^d\times\R^d)$, for some constant $\a_*\in(0,1)$ with the same dependence as $\lambda_*$. 
Hence, $g^\varepsilon$ converges uniformly to $g$ in $\Ckin^2([\underline{T},T]\times\T^d\times\R^d)$, up to a subsequence. 
		
Write the equation satisfied by $g^\varepsilon$ in the weak formulation, that is, for any $\phi\in {\mathcal C}^\infty_c(\overline{\O})$, 
\begin{equation}\label{epexistence}
\begin{split}
&\int_{\T^d\times\R^d}\left[g^\varepsilon(T,x,v)\phi(T,x,v)-g^\varepsilon_{\rm in}(x,v)\phi(0,x,v)\right]\\
&=\int_{\O}\left\{g^\varepsilon\left(\p_t+v\cdot\nabla_x\right)\phi-\RO[g^\varepsilon]\nabla_vg^\varepsilon\cdot\nabla_v\phi+\RO[g^\varepsilon]\left(\frac{d}{2}-\frac{|v|^2}{4}\right)g^\varepsilon\phi\right\}.
\end{split}
\end{equation}
Combining the energy estimate derived by choosing $\phi=g^\varepsilon$ above with the upper bound of $g^\varepsilon$ provided by Lemma~\ref{Gaussian}, we have  
\begin{equation*}
\int_{\O}\left|\RO\left[g^\varepsilon\right]\nabla_vg^\varepsilon\right|^2
\lesssim \int_{\O}\RO\left[g^\varepsilon\right]|\nabla_vg^\varepsilon|^2
\le \frac{1}{2}\int_{\T^d\times\R^d}|g_{\rm in}^\varepsilon|^2
 +\int_{\O}\RO[g^\varepsilon]\left(\frac{d}{2}-\frac{|v|^2}{4}\right)|g^\varepsilon|^2
\lesssim 1.
\end{equation*}
Therefore, after passing to a subsequence, $\RO[g^\varepsilon]\nabla_vg^\varepsilon$ converges weakly in $L^2(\O)$. On account of its local uniform convergence, we know that its weak limit is $\RO[g]\nabla_vg$. 
Besides, since $\m^{-\frac{1}{2}}g^\varepsilon$ is uniformly bounded, by their local uniform convergence, we can also derive that the sequences $g^\varepsilon$ and $\RO[g^\varepsilon]\left(\frac{d}{2}-\frac{|v|^2}{4}\right) g^\varepsilon$ converge to  $g$ and $\RO[g]\left(\frac{d}{2}-\frac{|v|^2}{4}\right) g$, respectively, weakly in $L^2(\O)$, up to a subsequence. 		
Then, for any $\phi\in {\mathcal C}^\infty_c([0,T)\times\T^d\times\R^d)$, sending $\varepsilon\rightarrow0$ in \eqref{epexistence} gives \eqref{existence0}. 

Furthermore, if the initial data $g_{\rm in}$ is continuous, then the barrier function method shows that the continuity around the initial time depends only on the upper bound of the solution and the continuity of $g_{\rm in}$; see the derivation of the estimate~\eqref{coninitialbarrier} of a general type in Subsection~\ref{finitetimesection} below. 
Indeed, by \eqref{coninitialbarrier} (with $\e=1$, $R=|v_1|$, $h_\e=\m^{-\frac{1}{2}}g$ and $\hine=\m^{-\frac{1}{2}}g_{\rm in}$), we see that 
for any fixed $\d\in(0,1)$, $(x_1,v_1)\in\T^d\times\R^d$ and for any $(t,x,v)\in \big[0,\frac{\d}{4(1+|v_1|)}\big]\times B_\d(x_1,v_1)$, 
\begin{equation}\label{barrierinitial}
\begin{split}
|g(t,x,v)-g_{\rm in}(x_1,v_1)|
\lesssim&\; \d^{-2}\la v_1\ra^2\m^\frac{1}{2}(v_1)t + \d^{-2}\m^\frac{1}{2}(v_1)\left(|x-x_1-tv|^2+|v-v_1|^2\right)\\
 &+\m^\frac{1}{2}(v_1)\sup\nolimits_{B_\d(x_1,v_1)}|g_{\rm in}(x,v)-g_{\rm in}(x_1,v_1)|\\
\lesssim&\; \d^{-2}\left(t+|x-x_1|^2+\m^\frac{1}{2}(v_1)|v-v_1|^2\right)\\
  &+\sup\nolimits_{B_\d(x_1,v_1)}|g_{\rm in}(x,v)-g_{\rm in}(x_1,v_1)|. 
\end{split}
\end{equation}
It implies the continuity of the solution $g$ around $t=0$. 
This finishes the proof. 
\end{proof}

One may extend the above existence result to the case where the spacial domain $\T^d$ is replaced by $\R^d$. 
\begin{corollary}\label{existwhole}
For any $g_{\rm in}\in{\mathcal C}^0(\R^d\times\R^d)$ such that $0\le g_{\rm in}\le\Lambda\m^\frac{1}{2}$ in $\R^d\times\R^d$, there exists a solution $g$ to the Cauchy problem~\eqref{gnk} satisfying $0\le g\le\Lambda\m^\frac{1}{2}$ in $(0,T]\times\R^d\times\R^d$. 
If additionally $\hin=\m^{-\frac{1}{2}}g_{\rm in}$ satisfies $\hin\ge\lambda$ and $\hin-M_1\in L^1(\R^d\times\R^d,\dif m)$ for some constant $M_1>0$, then $h=\m^{-\frac{1}{2}}g$ satisfies $h\ge\lambda$  in $(0,T]\times\R^d\times\R^d$ and 
\begin{equation}\label{l2l1}
\|h-M_1\|_{L_t^\infty([0,T]; L^1(\R^d\times\R^d,\,\dif m))}\le \|\hin-M_1\|_{L^1(\R^d\times\R^d,\,\dif m)}.
\end{equation}
\end{corollary}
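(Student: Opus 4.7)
The plan is to construct the solution on $\R^d$ as a locally uniform limit of solutions on tori of growing size (where Proposition~\ref{existence} applies directly), and then to transfer both the lower bound and the $L^1$ contraction from the torus to the whole space at the end.

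For existence, I would introduce for each integer $L\ge 1$ a periodic approximant $g_{\rm in}^{(L)}\in{\mathcal C}^0(\T_L^d\times\R^d)$ on the torus $\T_L^d:=(\R/L\Z)^d$, with $0\le g_{\rm in}^{(L)}\le\Lambda\m^{\frac12}$, coinciding with $g_{\rm in}$ on, say, $[-L/3,L/3]^d\times\R^d$ after a smooth spatial cutoff that interpolates toward $M_1\m^{\frac12}$ near the boundary of the fundamental domain. Proposition~\ref{existence} (whose proof carries over for tori of any size) furnishes a solution $g^{(L)}$ on $[0,T]\times\T_L^d\times\R^d$ satisfying $0\le g^{(L)}\le\Lambda\m^{\frac12}$ by Lemma~\ref{Gaussian}. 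Fix a compact $K\subset(0,T]\times\R^d\times\R^d$; for $L$ large enough that the initial support of positivity of $g_{\rm in}$ lies inside $\T_L^d$, Proposition~\ref{lower} applied to $g^{(L)}$ provides a positive lower bound on $\RO[g^{(L)}]$ on the compact projection of $K$ to $(0,T]\times\R^d$ that does \emph{not} depend on $L$. Lemma~\ref{boundschauder}~(i) then yields $L$-uniform $\Ck^{2+\a}$ estimates on $K$, so a diagonal extraction plus Arzel\`a--Ascoli produces $g^{(L_k)}\to g$ locally uniformly in $\Ckin^2$ to a function solving \eqref{gnk} pointwise on $(0,T]\times\R^d\times\R^d$; continuity at $t=0$ follows from the barrier estimate \eqref{barrierinitial}.

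For the additional claims, assume $\hin\ge\lambda$ and $\hin-M_1\in L^1(\R^d\times\R^d,\dif m)$, and arrange the approximants so that $h_{\rm in}^{(L)}:=\m^{-\frac12}g_{\rm in}^{(L)}\ge\lambda$ and
\begin{equation*}
\|h_{\rm in}^{(L)}-M_1\|_{L^1(\T_L^d\times\R^d,\dif m)}\le\|\hin-M_1\|_{L^1(\R^d\times\R^d,\dif m)},
\end{equation*}
which is achieved by interpolating between $\hin$ and $M_1$ through a smooth spatial cutoff compactly supported in the fundamental domain. On the torus, Lemma~\ref{Gaussian} applied to $h^{(L)}:=\m^{-\frac12}g^{(L)}$ yields $h^{(L)}\ge\lambda$. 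For the $L^1$ bound, since $\lfp$ annihilates the constant $M_1$,
\begin{equation*}
(\p_t+v\cdot\nabla_x)(h^{(L)}-M_1)=\rfp_{h^{(L)}}\,\lfp(h^{(L)}-M_1).
\end{equation*}
Multiplying by $\phi_\e'(h^{(L)}-M_1)$ for a smooth convex approximation $\phi_\e$ of $|\cdot|$ with $\phi_\e''\ge0$, and integrating over $\T_L^d\times\R^d$ against $\dif m$, the transport term vanishes by periodicity in $x$, while the diffusion term, after integration by parts in $v$ using the self-adjointness of $\lfp$ with respect to $\dif\m$, equals $-\int\rfp_{h^{(L)}}\phi_\e''(h^{(L)}-M_1)|\nabla_v(h^{(L)}-M_1)|^2\dif m\le 0$. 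Sending $\e\to 0$ (via dominated convergence, which is legitimate because $h^{(L)}$ is bounded and $\T_L^d$ has finite volume) shows that $t\mapsto\|h^{(L)}(t,\cdot,\cdot)-M_1\|_{L^1(\T_L^d\times\R^d,\dif m)}$ is non-increasing.

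Passing to the limit, the pointwise lower bound gives $h\ge\lambda$ at once; for \eqref{l2l1}, Fatou's lemma applied on $[0,T]\times B_R\times\R^d$ along the subsequence $L_k>2R$, combined with the locally uniform convergence $h^{(L_k)}\to h$, yields $\int_{B_R\times\R^d}|h(t,\cdot,\cdot)-M_1|\dif m\le\|\hin-M_1\|_{L^1(\dif m)}$, and then $R\to\infty$ closes the estimate. The main technical point is the construction of the periodic approximants preserving simultaneously the upper and lower pointwise bounds and the $L^1$ distance to $M_1\m$; this is handled by a convex interpolation driven by a smooth spatial cutoff, after which every other step is a standard integration by parts or compactness argument.
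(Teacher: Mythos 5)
Your proposal is correct and follows essentially the same route as the paper: approximate by periodic problems on large tori, use Proposition~\ref{existence} on each torus, obtain $L$-uniform compactness via the lower bound (Proposition~\ref{lower} / Remark~\ref{lowerwholeremark}) and Lemma~\ref{boundschauder}~(i), pass to the limit, and transfer the $L^1$ contraction from the torus to $\R^d$ via Fatou. Two small remarks. First, your smooth cutoff that interpolates $\hin$ toward the constant $M_1$ (which necessarily lies in $[\lambda,\Lambda]$ since $\hin-M_1\in L^1$) is a cleaner choice than the paper's indicator-function truncation $g_{\rm in}\mathbbm{1}_{[-R+R^{-1},R-R^{-1}]^d}$: yours simultaneously preserves continuity, the two-sided bounds $\lambda\le h_{\rm in}^{(L)}\le\Lambda$, and the inequality $\|h_{\rm in}^{(L)}-M_1\|_{L^1}\le\|\hin-M_1\|_{L^1}$, all of which are exactly what the rest of the argument needs. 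Second, for the $L^1$ decay you regularize $|\cdot|$ by a convex $\phi_\e$ and pass $\e\to 0$, whereas the paper tests the subsolution inequality for $(h^R-M_1)_\pm$ directly; both are standard and equivalent. One point worth making explicit: what the proposal invokes from Proposition~\ref{lower} is really the local statement underlying Remark~\ref{lowerwholeremark}, namely that the lower bound of $\la h^{(L)}\ra$ on a fixed compact $K\subset(0,T]\times\R^d$ depends only on $K$, the positivity seed of $g_{\rm in}$, and the universal constants but not on the torus size $L$ once $L$ is large enough for the Harnack chain to stay inside the fundamental domain; the global constant in Proposition~\ref{lower} as stated does depend on the diameter of the torus.
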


\begin{proof}
For $R>1$, we set $g_{\rm in}^R=g_{\rm in}\mathbbm{1}_{[-R+R^{-1},R-R^{-1}]^d}$ for $x\in[-R,R]^d$ with periodic extension to $\R^d$.
In the light of Proposition~\ref{existence}, we take a solution $g^R$ to the equation~\eqref{gnk} associated with the initial data $g_{\rm in}^R$ in $(0,T]\times[-R,R]^d\times\R^d$, where $[-R,R]^d$ is considered as a periodic box. 
After extracting a subsequence, we define the function $g:=\lim_{R\rightarrow\infty}g^R$ in $(0,T]\times\R^d\times\R^d$ pointwisely; 
furthermore, since $0\le \m^{-\frac{1}{2}}g^R\le\Lambda$ in $(0,T]\times[-R,R]^d\times\R^d$, 
we know that the limiting function satisfies $0\le \m^{-\frac{1}{2}}g\le\Lambda$ in $(0,T]\times\R^d\times\R^d$. 
Similarly, $\m^{-\frac{1}{2}}g_{\rm in}\ge\lambda$ in $\R^d\times\R^d$ implies that $\m^{-\frac{1}{2}}g\ge\lambda$ in $(0,T]\times\R^d\times\R^d$. 

Since the initial data is continuous, unless it is identically zero, we may assume that $g_{\rm in}\ge\delta\mathbbm{1}_{\{|x-x_0|<r,|v-v_0|<r\}}$, for some point $(x_0,v_0)\in\R^d\times\R^d$ and some constants $\delta,r>0$. 
Consider $R>|x_0|+r$. 
Applying the the lower bound of the solution given by \eqref{lowerwhole} yields that, for any compact subset $K\subset(0,T]\times\R^d\times\R^d$, the coefficient $\RO[g^R]\ge\lambda_*$, where the constant $\lambda_*>0$ only depends on universal constants,  $\delta,r,v_0$ and $K$.  
In view of the higher order H\"older estimate given by Lemma~\ref{boundschauder}~(i), we know that $g^R$ uniformly converges to $g$ in $\Ckin^2(K)$, up to a subsequence. 
Besides, due to the estimate derived in \eqref{barrierinitial}, the limiting function $g$ is a solution to~\eqref{gnk} that matches the initial data $g_{\rm in}$ continuously.

As for \eqref{l2l1}, we notice that the function $\big(h^R-M_1\big)_\pm$ with $h^R:=\m^{-\frac{1}{2}}g^R$ verifies 
\begin{equation*}
\left(\p_t+v\cdot\nabla_x\right)\big(h^R-M_1\big)_\pm \le \rfp_{h^R}\,\lfp \big(h^R-M_1\big)_\pm  {\quad \rm in\ } (0,T]\times[-R,R]^d\times\R^d. 
\end{equation*} 
Integrating the equation against the function $\int_{[-R,R]^d}\big(h^R-M_1\big)_\pm\dif x$ yields that 
\begin{align*}
\int_{[-R,R]^d\times\R^d}\big(h^R(t,\cdot,\cdot)-M_1\big)_\pm\dif m -\int_{[-R,R]^d\times\R^d}\big(h^R(0,\cdot,\cdot)-M_1\big)_\pm\dif m\le0. 
\end{align*}
Sending $R\rightarrow\infty$, we acquire
\begin{equation*}
\|(h-M_1)_\pm\|_{L_t^\infty([0,T]; L^1(\R^d\times\R^d,\,\dif m))}
\le \|(\hin-M_1)_\pm\|_{L^1(\R^d\times\R^d,\,\dif m)}, 
\end{equation*}
which implies the estimate \eqref{l2l1} as asserted. 
The proof is complete. 
\end{proof}

The following proposition concerned with the uniqueness of the Cauchy problem~\eqref{gnk} is derived from a Grönwall-type argument. 
The standard scaling technique and the H\"older estimate up to the initial time given by Lemma~\ref{boundschauder}~(ii) can improve the integrability with respect to $t$ in the energy estimate so that Grönwall's inequality becomes admissible; see \eqref{gronwall} below for the precise expression. This kind of phenomena was also noticed in \cite{HST1} (see the remarks in \S1.4.2). 
Besides, the global energy estimate of the equation~\eqref{gnk} is not available when the spatial domain is unbounded, since there is no decay of the solution as $|x|\rightarrow\infty$. 
To work it out, we take advantage of the idea originated from the uniformly local space that was used in \cite{Kato,HST2}. We also remark that such a technique is not necessary when working with the periodic box $\T^d$.

\begin{proposition}[Uniqueness]\label{unique}
Let the domain $\O_x=\T^d$ or $\R^d$, the constants $\a_0\in(0,1)$, and the functions $0\le g_1,g_2\lesssim\m^\frac{1}{2}$ be two solutions to \eqref{gnk} in $(0,T]\times\O_x\times\R^d$ associated with the same initial data $g_{\rm in}\in {\mathcal C}^{\a_0}(\O_x\times\R^d)$ such that 
\begin{equation*}
\int_{\R^{d}} g_{\rm in}\m^{\frac{1}{2}}\dif v\ge\lambda {\ \ \rm in\ }\O_x{\quad\rm and\quad}
0\le g_{\rm in}\lesssim\m^\frac{1}{2}{\ \ \rm in\ }\O_x\times\R^d.
\end{equation*} 
Then,  $g_1=g_2$ in $[0,T]\times\O_x\times\R^d$. 
\end{proposition}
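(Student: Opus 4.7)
Set $w:=g_1-g_2$, so that $w(0,\cdot,\cdot)\equiv 0$ and, subtracting the two equations,
\begin{equation*}
(\p_t+v\cdot\nabla_x)w
=\RO[g_1]\UO[w]+(\RO[g_1]-\RO[g_2])\UO[g_2].
\end{equation*}
The plan is to run an $L^2$ energy estimate on $w$ over a short time interval $[0,\tau_0]$ on which the principal term $\RO[g_1]\UO[w]$ produces (after integration by parts against $w$) a nonpositive contribution, while the nonlinear remainder $(\RO[g_1]-\RO[g_2])\UO[g_2]$ is absorbed via Gr\"onwall's inequality with a time-integrable singular weight. The hypothesis $\int g_{\rm in}\m^{1/2}\dif v\ge\lambda$ together with the H\"older continuity of the $g_i$ up to the initial time (Lemma~\ref{boundschauder}(ii)) would furnish a universal $\tau_0>0$ such that $\RO[g_i]\ge(\lambda/2)^\b$ on $[0,\tau_0]\times\O_x$; since $\b\in[0,1]$, this lower bound and the mean value theorem applied to $z\mapsto z^\b$ then give the linear control $|\RO[g_1]-\RO[g_2]|(t,x)\lesssim\int_{\R^d}|w(t,x,v)|\m^{1/2}(v)\dif v$ on $[0,\tau_0]$.

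The crucial step is to upgrade the naive bound $\|D_v^2 g_2\|_{L^\infty}\lesssim t^{-1}$ to a time-integrable one. Fix $z_0=(t_0,x_0,v_0)$ with $t_0\in(0,\tau_0]$ and set $r=t_0^{1/2}/4$. For a suitable $\d\in(0,\tfrac12)$, Lemma~\ref{boundschauder}(ii) (with corresponding universal H\"older exponent $\a\in(0,1)$) yields $\|g_2-p_0\|_{L^\infty(Q_{2r}(z_0))}\lesssim r^\a\m^\d(v_0)$ for the constant $p_0:=g_{\rm in}(x_0-t_0v_0,v_0)$. Since $p_0$ is constant, $g_2-p_0$ solves $\L_1(g_2-p_0)=\RO[g_2](\tfrac{d}{2}-\tfrac{|v|^2}{4})g_2$ with $A=\RO[g_2]I$ and $B=0$, whose coefficients satisfy \eqref{schaudercondition} locally thanks to the lower bound of the previous paragraph. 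Applying the interior Schauder estimate (Proposition~\ref{interiorschauder}) then gives
\begin{equation*}
\|D_v^2 g_2\|_{L^\infty(Q_r(z_0))}\lesssim r^{-2}\|g_2-p_0\|_{L^\infty(Q_{2r}(z_0))}+r^\a\lesssim r^{\a-2}\m^\d(v_0),
\end{equation*}
whence $\|\UO[g_2](t,x,\cdot)\|_{L^2_v}\lesssim t^{-1+\a/2}$ uniformly in $x$.

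For $\O_x=\T^d$, I would test the equation for $w$ against $w$, integrate by parts in $v$ (possible because $\RO[g_1]$ is $v$-independent), and discard the resulting nonpositive dissipation to obtain
\begin{equation*}
\tfrac{d}{dt}\|w(t)\|_{L^2(\T^d\times\R^d)}^2\le 2\int_{\T^d\times\R^d}(\RO[g_1]-\RO[g_2])\,w\,\UO[g_2]\,\dif x\dif v.
\end{equation*}
Cauchy--Schwarz in $v$, the two preceding steps, and Cauchy--Schwarz in $x$ bound the right-hand side by $C\,t^{-1+\a/2}\|w(t)\|_{L^2}^2$; since this singularity is integrable at $0$, Gr\"onwall's lemma combined with $w(0)\equiv 0$ forces $w\equiv 0$ on $[0,\tau_0]$, and iteration (the restarted problem at $t=\tau_0$ already satisfying a uniform positive lower bound on $\int g_i\m^{1/2}\dif v$) propagates this to $[0,T]$. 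For $\O_x=\R^d$, the global $L^2$ identity fails because $w$ does not decay in $x$; following the uniformly local approach of \cite{Kato,HST2}, I would test instead against $\chi_{x_0}^2(x)\,w$ with a translated exponential weight $\chi_{x_0}(x)=e^{-\s\la x-x_0\ra}$, take the supremum over $x_0\in\R^d$, and absorb the transport commutator $\int v\cdot\nabla_x(\chi_{x_0}^2)\,w^2$ using $|w|\lesssim\m^{1/2}$; the remainder of the argument is identical. The chief obstacle throughout is the scaling improvement of the second paragraph: without the $r^\a$ gain obtained by subtracting the local Hölder polynomial $p_0$ before invoking Schauder, one would only get $\|D_v^2 g_2\|_{L^\infty}\lesssim t^{-1}$, the Gr\"onwall integrand would diverge logarithmically, and uniqueness would fail --- which is precisely why the H\"older hypothesis on $g_{\rm in}$ is imposed.
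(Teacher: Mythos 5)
Your strategy coincides in outline with the paper's: use the lower bound to obtain uniform ellipticity, combine the up-to-$t=0$ kinetic H\"older estimate (Lemma~\ref{boundschauder}~(ii)) with the interior Schauder estimate (Proposition~\ref{interiorschauder}) to get a time-integrable bound on $D_v^2 g_i$, then close with Gr\"onwall from zero initial data. The torus case is essentially the paper's argument up to cosmetics: you subtract the constant $g_{\rm in}(x_0-t_0v_0,v_0)$ before invoking Schauder, whereas the paper subtracts $g_1(z_0)$ and interpolates against $g_1\lesssim\m^{1/2}$ to retain a Gaussian weight in $v_0$; both yield a singular exponent in $(-1,0)$, which is all Gr\"onwall needs. (A small point: integrating $\int\RO[g_1]\UO[w]w$ by parts leaves the nonnegative piece $\tfrac d2\int\RO[g_1]w^2$, so the dissipation is not entirely discardable, but this bounded term is absorbed harmlessly.)

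The genuine gap is in the whole-space case, at the step ``absorb the transport commutator $\int v\cdot\nabla_x(\chi_{x_0}^2)\,w^2$ using $|w|\lesssim\m^{1/2}$.'' The Gaussian pointwise bound gives only
$\int|v|\chi^2 w^2 \le \bigl(\int|v|^2\chi^2 w^2\bigr)^{1/2}\bigl(\int\chi^2 w^2\bigr)^{1/2}\lesssim\bigl(\int\chi^2 w^2\bigr)^{1/2}$,
so the resulting differential inequality for $y(t):=\sup_{x_0}\|\chi_{x_0}w(t)\|_{L^2}^2$ is of the form $y'\lesssim \sqrt{y}+(\text{integrable singularity})\,y$, and the square root prevents concluding $y\equiv0$ from $y(0)=0$. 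What is needed is a quadratic-in-$|v|$ dissipation paired with $\chi^2 w^2$. In fact you have one at your disposal but propose to throw it away: integrating $\int\RO[g_1]\UO[w]\chi^2 w$ by parts produces, besides $-\int\RO[g_1]\chi^2|\nabla_v w|^2$, the zero-order contribution $\int\RO[g_1]\bigl(\tfrac d2-\tfrac{|v|^2}4\bigr)\chi^2 w^2$, and its negative part $-\tfrac14\RO[g_1]|v|^2\chi^2 w^2$, together with the short-time bound $\RO[g_1]\ge(\lambda/2)^\b$, dominates $\sigma\int|v|\chi^2 w^2$ by Young's inequality for $\sigma$ small. The paper achieves the same more robustly by the substitution $\gg=e^{-|v|^2t/8}(g_1-g_2)$: this puts an explicit $\tfrac{|v|^2}8\gg$ on the left of~\eqref{uniqueiden0}, hence an explicit $-\tfrac{|v|^2}{16}\phi_y^2\gg^2$ in the energy identity after absorbing the extra commutators the weight creates. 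Either fix closes the argument; as stated, your plan for $\R^d$ is incomplete.
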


\begin{proof}
In view of the lower bound given by Lemma~\ref{lowerlemma1} and Proposition~\ref{lower}, we know that there is some constant $\lambda_*\in(0,1)$ depending only on universal constants, $T$ and the initial data such that
\begin{equation}\label{uniquelower}
\int_{\R^{d}} g_i\m^{\frac{1}{2}}\dif v\ge\lambda_* {\ \ \rm in\ } [0,T]\times\O_x, \quad i=1,2.
\end{equation} 
Therefore, we may assume $T=\Lambda^{-1}$ with $\Lambda>1$. 
Let us set the difference $\gg:=e^{-\frac{|v|^2t}{8}}(g_1-g_2)$. We have to show that $\gg$ is identically zero. 

In view of the equation~\eqref{gnk}, a direct computation yields that the function $\gg$ satisfies 
\begin{equation}\label{uniqueiden0}
\begin{split}
&\left(\p_t+v\cdot\nabla_x\right)\gg +\frac{|v|^2}{8}\gg\\
&=e^{-\frac{|v|^2t}{8}}\left(\RO[g_1]-\RO[g_2]\right)\UO[g_1]
+\RO[g_2]\left(\UO[\gg] +\frac{t}{2}\,v\cdot\nabla_v\,\gg +\left(\frac{dt}{4}+\frac{|v|^2t^2}{16}\right)\gg\right), 
\end{split}
\end{equation}
with the initial condition $\gg(0,x,v)=0$ in $\O_x\times\R^d$. 

Let $y\in\R^d$. We introduce a cut-off function $\phi_y(x):=\phi(x-y)$, where $\phi\in C^\infty_c(\R^d)$ is valued in $[0,1]$ such that $\phi|_{B_1}\equiv 1$, $\phi|_{B_2^c}\equiv 0$ and $|\nabla\phi|\lesssim1$ in $\R^d$. 
For any $t\in(0,T]$, integrating the equation~\eqref{uniqueiden0} against $\phi_y^2\gg$ in $\O_x\times\R^d$ and applying integration by parts yields that 
\begin{align*}
\frac{1}{2}\int_{\O_x\times\R^d}\phi_y^2\gg^2(t)
=\int_0^t\!\int_{\O_x\times\R^d} \bigg\{(v\cdot\nabla\phi_y)\,\phi_y\gg^2 -\frac{|v|^2}{8}\phi_y^2\gg^2
 + e^{-\frac{|v|^2t}{8}}\left(\RO[g_1]-\RO[g_2]\right)\UO[g_1]\,\phi^2\gg &\nonumber\\
-\RO[g_2]\left(\left|\nabla_{v}\left(\m^{-\frac{1}{2}}\,\gg\right)\right|^2\!\mu\phi_y^2
+\frac{dt}{4}\phi_y^2\gg^2
-\left(\frac{dt}{4}+\frac{|v|^2t^2}{16}\right)\phi_y^2\gg^2\right)\bigg\}&.
\end{align*}
Since $\RO[g_2]\in[0,\Lambda]$, for any $t\in(0,T]$, we have 
\begin{equation*}\label{uniqueiden}
\frac{1}{2}\int_{\O_x\times\R^d}\phi_y^2\gg^2(t)
\le \int_0^t\!\int_{\O_x\times\R^d} \bigg\{|v||\nabla\phi_y|\,\phi_y\gg^2 -\frac{|v|^2}{16}\phi_y^2\gg^2
+ \m^{-\frac{1}{4}}\left|\RO[g_1]-\RO[g_2]\right||\UO[g_1]|\phi_y^2|\gg|\bigg\}. 
\end{equation*}
Due to the elementary inequality $|z^\beta-1|\le|z-1|$ $(z\in\R^+)$ and the lower bound estimate in \eqref{uniquelower}, as well as the boundedness assumption on $g_1,g_2$, we have 
\begin{align*}
\left|\RO[g_1]-\RO[g_2]\right|\le \RO[g_1]^\frac{\b-1}{\b}|\RO[\gg]|^\frac{1}{\b}\le \frac{1}{\lambda_*}\int_{\R^d}|\gg(t,x,\cdot)|\m^\frac{1}{2}
\lesssim_{\lambda_*}1  {\quad\rm in\ }[0,1]\times\O_x. 
\end{align*}
It then follows that for any $t\in(0,T]$, 
\begin{align}\label{uniquenon0}
\frac{1}{2}\int_{\O_x\times\R^d}\phi_y^2\gg^2(t)
\le& \int_0^t\!\int_{\O_x\times\R^d} \left(|v||\nabla\phi_y|\,\phi_y\gg^2 -\frac{|v|^2}{16}\phi_y^2\gg^2\right)\nonumber\nonumber\\
  &+\frac{1}{\lambda_*}\int_0^t\big\|\mu^{-\frac{3}{8}}\UO[g_1]\big\|_{L^\infty_{x,v}}\int_{\O_x\times\R_v^d} \phi_y^2|\gg(t,x,v)|\mu^{\frac{1}{8}}
   \int_{\R_\xi^d}|\gg(t,x,\xi)|\m^\frac{1}{2}\dif\xi \nonumber\\  
\lesssim&_{\lambda_*} \int_0^t\!\int_{\O_x\times\R^d} |\nabla\phi_y|^2\gg^2 + 
\int_0^t\big\|\mu^{-\frac{3}{8}}\UO[g_1]\big\|_{L^\infty_{x,v}}\int_{\O_x\times\R^d} \phi_y^2\gg^2,   
\end{align}
where we used the Cauchy–Schwarz inequality and Hölder's inequality in the last line. 
Recalling that $\phi_y(x)=\phi(x-y)\in C^\infty_c(\R^d)$ and $|\nabla\phi|\lesssim1$ in $\R^d$, we have 
\begin{align*}
\sup_{y\in\R^d}\int_{\O_x\times\R^d}|\nabla\phi_y|^2\gg^2
\lesssim \sup_{y\in\R^d}\int_{\O_x\times\R^d} \phi_y^2\gg^2. 
\end{align*}
By the definition of $\UO[g_1]$ and the upper bound $g_1\lesssim \m^\frac{1}{2}$, 
\begin{align*}
\big\|\mu^{-\frac{3}{8}}\UO[g_1]\big\|_{L^\infty_{x,v}}
\lesssim 1+\big\|\mu^{-\frac{3}{8}}\Delta_{v}g_1\big\|_{L^\infty_{x,v}}. 
\end{align*}
Hence, for any $t\in(0,T]$, taking supremum over $y\in\R^d$ in \eqref{uniquenon0}, we obtain 
\begin{align}\label{uniquenon}
\sup_{y\in\R^d}\int_{\O_x\times\R^d}\phi_y^2\gg^2(t)
\lesssim_{\lambda_*} \int_0^t\left(1+\big\|\mu^{-\frac{3}{8}}\Delta_{v}g_1\big\|_{L^\infty_{x,v}}\right) \sup_{y\in\R^d}\int_{\O_x\times\R^d} \phi_y^2\gg^2. 
\end{align}

Now we have to consider the pointwise estimate on $D_v^2g_1$. Let $z_0=(t_0,x_0,v_0)\in(0,T]\times\O_x\times\R^d$ and $2r=t_0^\frac{1}{2}$. 
In view of \eqref{uniquelower}, Lemma~\ref{boundschauder}~(ii) implies that there exists some constant ${\a_*}\in(0,1)$ with the same dependence as $\lambda_*$ such that 
\begin{align*}
\|g_1\|_{\Ck^{\a_*}\left([0,T]\times\O_x\times B_1(v_0)\right)}
\lesssim_{\lambda_*} 1+[g_{\rm in}]_{{\mathcal C}^{\a_0}(\O_x\times\R^d)}. 
\end{align*}
Then, applying the interior Schauder estimate (Proposition~\ref{interiorschauder}) and the upper bound $g_1\lesssim \m^\frac{1}{2}$ yields that 
\begin{equation*}
\begin{split}
\big\|D_v^2g_1\big\|_{L^\infty(Q_{r}(z_0))}
&\lesssim_{\lambda_*} r^{-2}\| g_1-g_1(z_0)\|_{L^\infty(Q_{2r}(z_0))}
+r^{\a_*}\left[\RO[g_1]\left(\frac{d}{2}-\frac{|v|^{2}}{4}\right)g_1\right]_{\Ck^{\a_*}\left(Q_{2r}(z_0)\right)}\\
&\lesssim_{\lambda_*} r^{-2+\frac{{\a_*}}{4}}\m^\frac{3}{8}(v_0)[g_1]_{\Ck^{\a_*}(Q_{2r}(z_0))}^\frac{1}{4}
 +\m^\frac{3}{8}(v_0)[g_1]_{\Ck^{\a_*}(Q_{2r}(z_0))}^\frac{1}{4}\\
& \lesssim_{\lambda_*}t_0^{-1+\frac{\a_*}{8}} \m^\frac{3}{8}(v_0) \Big(1+[g_{\rm in}]_{{\mathcal C}^{\a_0}(\O_x\times\R^d)}^\frac{1}{4}\Big). 
\end{split}
\end{equation*}
By the arbitrariness of $z_0$, we know that for any $s\in(0,T]$,  
\begin{equation*}
\big\|\mu^{-\frac{3}{8}}\Delta_{v}g_1(s)\big\|_{L^\infty(\O_x\times\R^d)}
\lesssim_{\lambda_*} \Big(1+[g_{\rm in}]^\frac{1}{4}_{{\mathcal C}^{\a_0}(\O_x\times\R^d)}\Big)\,s^{-1+\frac{\a_*}{8}}. 
\end{equation*} 
Dragging this estimate into \eqref{uniquenon} yields that for any $t\in(0,T]$,
\begin{align}\label{gronwall}
\sup_{y\in\R^d}\int_{\O_x\times\R^d}\phi_y^2\gg^2(t)
\le C_*\int_0^t\dif s\left(1+s^{-1+\frac{\a_*}{8}}\right) \sup_{y\in\R^d}\int_{\O_x\times\R^d}\phi_y^2\gg^2(s),  
\end{align}
where the constant $C_*>0$ depends only on universal constants and the initial data. 
The desired result is then given by Grönwall's inequality. 
\end{proof}

\subsection{Global regularity}\label{smoothsection}
The instantaneous smoothness a priori estimate in Theorem~\ref{wellpose}~(i) is made up of the lower bound 
given by Proposition~\ref{lower} and the following proposition. 

\begin{proposition}\label{smoothness}
Let $\O_x=\T^d$ or $\R^d$,  
$\underline{T}\in(0,T)$, and the function$g$ be a solution to \eqref{gnk} in $(0,T)\times\T^d\times\R^d$ such that 
\begin{equation}\label{lowerunif}
\RO[g]\ge\lambda {\ \ \rm in\ }\left[{\underline{T}}/{4},T\right]\times\O_x {\quad\rm and\quad}
	0\le g\le\Lambda\m^\frac{1}{2}{\ \ \rm in\ }[0,T]\times\O_x\times\R^d. 
\end{equation}
Then, for any $\n\in\big(0,\frac{1}{2}\big)$ and $k\in\N$, we have 
\begin{equation*}
	\|\m^{-\n}g\|_{{\mathcal C}^{k}\left([\underline{T}, T]\times\O_x\times\R^d\right)} \le C_{\underline{T},\n,k}, 
\end{equation*}
for some constant $C_{\underline{T},\n,k}>0$ depending only on universal constants, $\underline{T},\n$ and $k$. 
\end{proposition}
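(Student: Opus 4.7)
The plan is to bootstrap the base $\Ck^{2+\alpha}$ regularity provided by Lemma~\ref{boundschauder}~(i) to arbitrary smoothness by iterating the interior Schauder estimate (Proposition~\ref{interiorschauder}) on differentiated versions of the equation, while carefully tracking a Gaussian weight in $v$ at each step; this is essentially the bootstrap procedure of \cite{IS} adapted to the present setting.

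First, I would combine the lower bound $\RO[g]\ge\lambda$ on $[\underline{T}/4,T]\times\O_x$ and the upper bound $g\le\Lambda\m^{\frac12}$ to apply Lemma~\ref{boundschauder}~(i) on any interior cylinder $Q_{2r}(z_0)\subset(\underline{T}/2,T]\times\O_x\times\R^d$, which gives, for some universal $\a\in(0,1)$ and for any $\d\in(0,\tfrac12)$,
\begin{equation*}
\|g\|_{\Ck^{2+\a}(Q_r(z_0))}\lesssim_{\underline{T},\d}\m^\d(v_0).
\end{equation*}
Next, I would rewrite the equation in the linear form $Yg-A\Delta_v g+qg=0$, with $Y=\p_t+v\cdot\nabla_x$, $A(t,x)=\RO[g](t,x)$ (independent of $v$, bounded above and below by universal constants on the relevant region), and $q(t,x,v)=(|v|^2/4-d/2)\,\RO[g]$ quadratic in $v$. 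Crucially, the coefficient $A$ inherits regularity in $(t,x)$ from $g$ by integration against $\m^{1/2}$, so any regularity gain on $g$ is automatically transferred to $A$.

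The bootstrap step proceeds by induction on the kinetic order $n$: assuming $\|g\|_{\Ck^{n+\a}(Q_r(z_0))}\lesssim \m^{\d_n}(v_0)$ for some $\d_n<\tfrac12$, I would differentiate the equation by an admissible kinetic derivative ($\p_{v_j}$ of kinetic degree $1$, $Y$ of degree $2$, or $\p_{x_j}$ of degree $3$), obtaining a linear equation of the form \eqref{k} for the derivative whose coefficients and source are controlled in $\Ck^{(n-1)+\a}$ or $\Ck^{(n-2)+\a}$ (with an $\m^{\d_n}(v_0)$ weight, possibly worsened by a polynomial factor of $v_0$ coming from the growing coefficient $q$ and from the commutator $[Y,\p_{v_j}]=\p_{x_j}$). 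Applying Proposition~\ref{interiorschauder} on a slightly smaller cylinder upgrades the derivative to $\Ck^{(n+2-\text{deg})+\a}$ with a weight $\m^{\d_{n+2}}(v_0)$ for some $\d_{n+2}<\d_n$. After finitely many iterations, I recover $\Ck^{K+\a}$ regularity of $g$ with weight $\m^{\d}(v_0)$; choosing $K$ large enough relative to $k$ (using $\deg\p_{v_j}=1,\,\deg\p_t=2,\,\deg\p_{x_j}=3$) converts this kinetic estimate into a pointwise bound on every Euclidean derivative of order up to $k$.

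The main obstacle is preserving the Gaussian decay through the iteration: each step degrades the exponent $\d_n$ because $q$ brings down a factor of $|v|^2$ and commuting $Y$ with $\p_v$ introduces $\p_x$-terms that cost kinetic degrees. I would address this by fixing the target $\n\in(0,\tfrac12)$ and $k\in\N$ at the outset, then starting the bootstrap with $\d_0\in(\n,\tfrac12)$ chosen close enough to $\tfrac12$ that the finite (depending on $k$) number of iterations leaves a residual Gaussian weight at least as good as $\m^{\n}(v_0)$. Uniformity in $x_0$ (and hence in the whole-space case $\O_x=\R^d$) follows immediately from the translation invariance in $x$ of the equation and the uniformity in $x$ of the lower bound on $\RO[g]$, so all estimates are independent of the base point $(x_0,v_0)$.
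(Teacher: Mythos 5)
There is a genuine gap in your bootstrap: you cannot differentiate the equation in $x$ (or even in $v$, as soon as the commutator $[Y,\partial_{v_j}]=\partial_{x_j}$ appears in the source term) starting from $g\in\Ck^{2+\alpha}$. The kinetic degree of $\partial_{x_j}$ is $3$, while the kinetic Schauder estimate only lands you in $\Ck^{2+\alpha}$ with $\alpha<1$, so $\partial_x g$ need not exist as a function, let alone lie in $\Ck^{\alpha}$. Equivalently, $\Ck^{2+\alpha}\not\subset{\mathcal C}^1_x$: the Schauder norm controls $Yg=(\partial_t+v\cdot\nabla_x)g$ and $D^2_v g$, but it does \emph{not} separately control $\partial_t g$ or $\partial_x g$, so the very first iteration in your scheme (e.g.\ writing the equation for $\partial_{v_j}g$, whose source contains $\partial_{x_j}g$ from the transport commutator) is illegitimate. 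Your sentence "commuting $Y$ with $\partial_v$ introduces $\partial_x$-terms that cost kinetic degrees" understates the problem: the issue is not merely bookkeeping of exponents but that the needed derivative is not known to exist at the regularity level the Schauder estimate provides.

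The paper's proof is structured precisely around this obstruction: Step~1 of Proposition~\ref{smoothness} replaces $\partial_x$-differentiation by estimates on spatial \emph{increments} $\delta_y g(z)=g(w\circ(0,y,0))-g(w)$. Lemma~\ref{incrementi-IS} gives $\|\delta_y g\|_{\Ck^\alpha}\lesssim\|g\|_{\Ck^{2+\alpha}}\|(0,y,0)\|^2=\|g\|_{\Ck^{2+\alpha}}|y|^{2/3}$, and then the equation for $\delta_y\partial_x^{k_x}g$ (which only involves already-controlled increments and lower $\partial_x$-derivatives, not a new $\partial_x$) is fed into the interior Schauder estimate. The crucial upgrade from $|y|^{2/3}$ to $|y|^1$ — which makes $\lim_{y\to0}\delta_y g/|y|$ meaningful, i.e.\ produces a genuine $x$-derivative — is obtained by the telescoping/doubling argument in Lemma~\ref{mio-lemma}. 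Only after the $x$-derivatives are secured does the paper bootstrap $t$- and $v$-derivatives (Steps~2--3), and those steps also rely on a careful bidimensional induction rather than the single-parameter iteration you describe. To complete your proposal, you would need to import this increment machinery; without it, the bootstrap stalls at the very first step.
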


Generally speaking, if $g$ be a solution to \eqref{gnk} in $(0,T]\times\O_x\times\R^d$ constructed by Proposition~\ref{existence} ($\O_x=\T^d$) or Corollary~\ref{existwhole} ($\O_x=\R^d$), then the uniform positivity  assumption \eqref{lowerunif} should be replaced by 
\begin{equation*} 
\RO[g]\ge\lambda_{t,x} {\ \ \rm in \ } (0,T]\times\O_x
{\quad\rm and\quad} 0\le g\le\Lambda\m^\frac{1}{2}	{\ \ \rm in \ } \O_x\times\R^d,
\end{equation*}
where $\lambda_{t,x}>0$ may degenerate to zero as $t\rightarrow0$, or $t+|x|\rightarrow\infty$; see Proposition~\ref{lower} and Remark~\ref{lowerwholeremark}. 
As an immediate consequence of the above proposition, for any $\n\in\big(0,\frac{1}{2}\big)$, $k\in\N$ and for any compact subset $K\subset(0,T]\times\O_x$, there exists some constant $C_{\n,k,K}>0$ depending only on universal constants, $\n,k$ and $K$ such that 
\begin{equation*}
	\|\m^{-\n}g\|_{{\mathcal C}^{k}\left(K\times\R^d\right)} \le C_{\n,k,K}, 
\end{equation*}
which is exactly the assertion in Theorem~\ref{wellpose} (i).

In order to show the higher regularity, we will apply the bootstrap procedure developed in \cite{IS} which was intended for the non-cutoff Boltzmann equation. 
The classical bootstrap iteration proceeds by differentiating the equation, using a priori estimates to the new equation to improve the regularity of solutions, and repeating such procedure. 
Nevertheless, since $\Ck^{2+\a}\not\subset{\mathcal C}_x^1$ for any $\a\in(0,1)$ by their definitions, the hypoelliptic structure of the equation~\eqref{gnk} lacks gain of enough regularity in $x$-variable which disables the $x$-differentiation at each iteration. 
Indeed, the Schauder type estimate provided by Lemma~\ref{boundschauder}~(i) only shows that the solution to \eqref{gnk} belongs to ${\mathcal C}^\frac{2+\a}{3}$ with respect to $x$-variable. 
In order to overcome it, we have to apply estimates to increments of the solution to recover a full derivative. 
From now on, for $y \in \R^{d}$ and $w\in\R\times\R^d\times\R^d$, we denote the spatial increment 
\begin{equation*}
\d_yg(z):= g(w\circ (0,y,0))-g(w).
\end{equation*}

Let us proceed with the proof of the regularity estimate. 
\begin{proof}[Proof of Proposition~\ref{smoothness}]
We are going to show that for any multi-index $k:=(k_t,k_x, k_v)\in\N\times\N^d\times\N^d$ and $\n\in\big(0,\frac{1}{2}\big)$, 
there exists some constant $\a_k\in(0,1)$ depending only on $|k|$ such that for any $Q_{r}(z_0)\subset[\underline{T}/2,T] \times \O_x\times\R^d$, 
\begin{equation}\label{inequality}
\|\p_{t}^{k_t}\p_{x}^{k_x}\p_{v}^{k_v}g\|_{\Ck^{2+\a_k}(Q_r(z_{0}))} 
\lesssim_{\underline{T},\n,k} \mu^\n(v_0). 
\end{equation}

For simplicity, we will omit the domain in estimates below, since the estimates can be always localized around the center $z_0$. 
		
\medskip\noindent{\textbf{Step 0.}} The case of $k=(0,0,0)$ in \eqref{inequality} is a direct consequence of Lemma \ref{boundschauder}~(i).

\medskip\noindent{\textbf{Step 1.}} We will establish that \eqref{inequality} holds for any differential operators of the type $\p^{k_x}_x$. It suffices to show that for any $n\in\N$, $k_x\in\N^d$ with $|k_x|=n$, $\n\in\big(0,\frac{1}{2}\big)$ and $y \in B_{\frac{r^3}{4}}$,
\begin{equation}\label{xincre}
\|\delta_y\p^{k_x}_xg\|_{\Ck^{2+\a_n}} \lesssim_{\underline{T},\n,n} |y|\mu^{\nu}(v_0).
\end{equation}
Indeed, sending $y \rightarrow 0$ in \eqref{xincre} will complete this step. 
		
Based on an induction on $|k_x|=n$, we suppose that \eqref{xincre} holds for any $|k_x|\le n-1$, which implies for any $k_x\in\N^d$ with $|k_x|\le n$, 
\begin{equation}\label{step1hypo}
\|\p^{k_x}_xg\|_{\Ck^{2+\a_n}} \lesssim_{\underline{T},\n,n} \mu^{\nu}(v_0).
\end{equation}
We remark that the induction here begin with \eqref{step1hypo} for $|k_x|=0$, which holds due to the previous step. 
		
Let $q:=\delta_{y}\p^{k_x}_x g$ with $|k_x|=n$. 
Lemma~\ref{incrementi-IS} and \eqref{step1hypo} gives 
\begin{equation}\label{step1lemma}
\|q\|_{\Ck^{\a_n}} 
\lesssim \|\p^{k_x}_x g\|_{\Ck^{2+\a_n}}\|(0,y,0)\|^{2}
\lesssim_{\underline{T},\n,n} |y|^{\frac{2}{3}}\mu^{\nu}(v_0).
\end{equation}
Therefore, we have to enhance the exponent $\frac{2}{3}$ on the right hand side to $1$; as a sacrifice, the H\"older exponent on the left hand side will decrease. 
		
Set $\t_yg(w):=g(w\circ (0,y,0))$ for $y\in\R^d$ and $w\in\R\times\R^d\times\R^d$. 
A direct computation shows that $q$ verifies the equation, 
\begin{equation}\label{step1eq}
(\p_{t} + v \cdot \nabla_{x}) q = \RO[g]\, \UO[q] 
+ \sum\nolimits_{|i| \le n \atop i \le k_{x}} \delta_{y}\hat{D_{i}}\RO[g]\, \UO[\t_yD_{i} g]
+\sum\nolimits_{|i| \le n-1 \atop i \le k_{x}}\hat{D_{i}}\RO[g]\, \UO[\delta_{y} D_{i} g],
\end{equation}
where the multi-indices such that $i \le k$ means each component of $i$ is lower or equal than the corresponding component of $k$
and $\hat{D_{i}}$ denotes the differential operator satisfying $\p_{t}^{k_t} D_{x}^{k_{x}} = \hat{D_{i}} \circ D_{i}$. 
		
In view of \eqref{step1hypo} \eqref{step1lemma} and the induction hypothesis, each term in two summations on the right hand side of \eqref{step1eq} is bounded in $\Ck^{\a_n}$ by $C_n\|(0,y,0)\|^2\mu^{\nu'}(v_0)$ for any $\nu'\in(0,\nu)$. 
Then, by the interior Schauder estimate (Proposition~\ref{interiorschauder}), 
\begin{equation}\label{gain-x}
\|q\|_{\Ck^{2+\a_n}}\lesssim_{\underline{T},\n',n}\|(0,y,0)\|^2\mu^{\nu'}(v_0).
\end{equation}
Combining Lemma~\ref{mio-lemma} with \eqref{step1lemma} and \eqref{gain-x}, we obtain \eqref{xincre}.

\medskip\noindent{\textbf{Step 2.}} For \eqref{inequality} in the case of $k_v=0$, we proceed a bidimensional induction on $(m,n)=(k_t,|k_{x}|)$ such that for any $\n\in\big(0,\frac{1}{2}\big)$, 
\begin{equation}\label{inductive-t}
\| \p_{t}^{k_{t}} D^{k_{x}}_{x} g \|_{\Ck^{2+\a_{m,n}}} \lesssim_{\underline{T},\n,m,n}\mu^{\nu}(v_0).
\end{equation}
Based on the previous step ($m=0$), we have to show that \eqref{inductive-t} holds for $k_t=m\ge1$, $|k_{x}| =n$, under the induction hypothesis that \eqref{inductive-t} holds for any $k_t\le m-1$ and $|k_x|\le n+1$, 
		
With $k_t=m>0$, $|k_{x}| =n$, set $q:=\p_{t}^{k_t} D_{x}^{k_{x}} g$. Then, there holds
\begin{equation}\label{eqstep2}
(\p_{t} + v \cdot \nabla_{x}) q = \RO[g]\, \UO[q] + \sum\nolimits_{i \le (k_t, k_{x}, 0) \atop i \ne (k_t, k_{x}, 0)}\hat{D_{i}}\RO[g]\, \UO[D_{i} g],
\end{equation}
where we use the notation $\hat{D_{i}}$ such that $\p_{t}^{k_t} D_{x}^{k_{x}} = \hat{D_{i}} \circ D_{i}$. 
		
By the induction hypothesis, each term in the remainder (the summation on the right hand side of \eqref{eqstep2}) with $i \ne (0,0,0)$ can be controlled in $\Ck^{\a_{m,n}}$. 
It now suffices to deal with the exceptional term $\p_{t}^{k_t} D_{x}^{k_{x}} \RO[g]$ so that the whole remainder can be controlled in $\Ck^{\a_{m,n}}$; and then \eqref{inductive-t} follows from the interior Schauder estimate (Proposition~\ref{interiorschauder}). 
To this end, using Lemma~\ref{kinetic-degree} and the induction hypothesis with the pair $(m-1,n)$ yields 
\begin{equation}\label{1}
\| (\p_{t} + v \cdot \nabla_{x}) \p_{t}^{m-1} D^{k_{x}}_{x} g \|_{\Ck^{\a_{m,n}}} 
\lesssim_{\underline{T},\n,m,n}\mu^{\n}(v_0).
\end{equation}
Due to the induction hypothesis with the pair $(m-1,n+1)$, for any $\nu'\in(0,\nu)$, 
\begin{align}\label{2}
\mu^{-\nu'}(v_0)\| (v \cdot \nabla_{x}) \p_{t}^{m-1} D^{k_{x}}_{x} g \|_{\Ck^{2+\a_{m,n}}} 
\lesssim_{\n,\n'} \mu^{-\nu}(v_0)\| \p_{t}^{m-1} \nabla_{x} D^{k_{x}}_{x} g \|_{\Ck^{2+\a_{m,n}}} 
\lesssim_{\underline{T},\n,m,n} 1.
\end{align}
Then, \eqref{1} and \eqref{2} produce the bound on $\mu^{-\nu'}(v_0)\|q\|_{\Ck^{\a_{m,n}}}$.

\medskip\noindent{\textbf{Step 3.}} Similarly, to show \eqref{inequality} for any differential operator $\p_{t}^{k_{t}} D^{k_{x}}_{x} D^{k_{v}}_{v}$, 
we proceed a bidimensional induction on $(m,n)=(k_t+|k_x|,k_{v})$ such that for any $\n\in\big(0,\frac{1}{2}\big)$,
\begin{equation}\label{inductive-v}
\| \p_{t}^{k_{t}} D^{k_{x}}_{x} D^{k_{v}}_{v} g \|_{\Ck^{2+\a_{m,n}}} 
\lesssim_{\underline{T},\n,m,n}\mu^{\nu}(v_0).
\end{equation}
The case $n=0$ is treated in the previous step. By Lemma~\ref{kinetic-degree} and the induction hypothesis \eqref{inductive-v} with $k_{t} + |k_{x}| = m$ and $|k_v|=n-1$, $n \ge 1$, we have
\begin{equation*}
\| \p_v  \p_t^{k_{t}} \p_x^{k_{x}} \p_v^{k_v} g \|_{\Ck^{\a_{m,n}}} 
\lesssim \| \p_t^{k_{t}} \p_x^{k_{x}} \p_v^{k_v} g \|_{\Ck^{1+\a_{m,n}}}  
\lesssim_{\underline{T},\n,m,n} \m^{\n}(v_0).
\end{equation*}
Computing the equation satisfied by $\p_v \p_{t}^{k_{t}} \p^{k_{x}}_{x} \p^{n-1}_{v} g$ and proceeding the argument as in the previous step, we conclude the proof.
\end{proof}

\section{Diffusion asymptotics}\label{diffusionlimit}
This section is devoted to the study of the global in time quantitative diffusion asymptotics which consists of the (uniform-in-$\e$) convergence towards the equilibrium over long times and of the finite time asymptotics, including the results of Theorem~\ref{wellpose}~(ii) and Theorem~\ref{limit}. 

First of all, let us introduce the required notation. 
For any scalar or vector valued function $\Psi\in L^1(\R^d,\dif\mu)$, we denote its velocity mean by
	\begin{equation*}
	\la \Psi\ra:=\int_{\mathbb{R}^d} \Psi(v) \dif\mu. 
	\end{equation*}
For any couple of functions (scalar, vectors or $d\times d$-matrices) $\Psi_1,\Psi_2\in L^2\left(\T^d\times\mathbb{R}^d,\dif m\right)$, we denote their $L^2$ inner product with respect to the measure $\dif m$ by
\begin{equation*}
\left(\Psi_1, \Psi_2\right):=\int_{\T^d\times\mathbb{R}^d} \Psi_1(x,v)\Psi_2(x,v)\dif m, 
\end{equation*}
where the multiplication between the couple in the integrand is replaced by scalar contraction product, if $\Psi_1,\Psi_2$ is a couple of vectors or matrices.  
	
Recalling our notation for the Ornstein-Uhlenbeck operator  $\lfp=\left(\nabla_v-v\right)\cdot\nabla_v$, we apply the substitutions $f_\e=\m h_\e$, $f_{\e,\rm in}=\m\hine$ to \eqref{epfnk} and obtain 
\begin{equation}\label{epnk}
\left\{ 
\begin{aligned}
\ &\left(\e\p_t+v\cdot\nabla_x\right) h_\e(t,x,v) = \frac{1}{\e}\la h_\e\ra^\b(t,x)\,\lfp h_\e(t,x,v), \\
\ &\;h_\e(0,x,v) = h_{\e,\rm in} (x,v), \\
\end{aligned}
\right. 
\end{equation} 
In this setting, by applying integration by parts, for any $h_1,h_2\in  \mathcal{C}_c^\infty(\T^d\times\R^d)$ we get
\begin{equation*}
\left(h_1,\, \lfp h_2\right)=-\left(\nabla_vh_1,\, \nabla_vh_2\right). 
\end{equation*}
We will use this identity repeatedly in the computation below. 	
Then, the operator $\lfp$ is self-adjoint with respect to the inner product $(\cdot,\cdot)$ and the bracket $\la \cdot\ra$ is a projection on the null space of $\lfp$. 
Moreover, as the total mass is conserved, we define
	\begin{equation}\label{FP2cond}
	M_0:=\int_{\T^d\times\mathbb{R}^d} h_\e\dif m=\int_{\T^d}\la h_\e\ra\dif x.
	\end{equation}
 Proceeding with the macro-micro (fluid-kinetic) decomposition, we define the orthogonal complement of the projection~$\la \cdot\ra$ of $h_\e$ as 
	$$h^\perp_\e(t,x,v):=h_\e(t,x,v)-\la h_\e\ra(t,x).$$ 
	In this framework, the local mass $\la h_\e\ra$ is the macroscopic (fluid) part and the complement $h^\perp_\e$ is the microscopic (kinetic) part. 
Besides, taking the bracket $\la\cdot\ra$ after multiplying the equation in \eqref{epnk} with $1$ and $v$ leads to the following macroscopic equations, 
	\begin{align}
	&\e\partial_t\la h_\e\ra+\nabla_x\cdot\la vh_\e\ra=0,  \label{hydroFP1} \\
	&\e\partial_t\la vh_\e\ra + \nabla_x\cdot\la v^{\otimes2}h_\e\ra
	=-\frac{1}{\e}\la h_\e\ra^\beta \la vh_\e\ra,   \label{hydroFP2}
	\end{align}
	where $\la vh_\e\ra$ and $\la v^{\otimes2}h_\e\ra$ represent the local momentum and the stress tensor, respectively.

\subsection{Long time behavior}\label{longtimesection}
Our aim is to establish the (uniform-in-$\e$) exponential decay towards the equilibrium $M_0$ for \eqref{epnk}. 
In particular, when $\e=1$, it sets up the exponential convergence in each order derivative based on the smoothness a priori estimates given in Subsection~\ref{smoothsection}. 
	
We remark that the classical coercive method is not applicable in our case to obtain the convergence to equilibrium due to the degeneracy of the ellipticity of the spatially inhomogeneous equation. Indeed, the Poincaré inequality only produces a spectral gap on the orthogonal complement of the projection $\la\cdot\ra$; see \eqref{hypoestimate0} below. 
As we mentioned in Subsection~\ref{background}, there are several ways to achieve the long time asymptotics. 
We will mainly follow the argument presented in \cite{EGKM} (see also \cite{KGH}) in a simpler scenario. 
It would also allow us to see some similarity among \cite{EGKM,DMS,He}.

\begin{proposition}\label{longtime}
Let the function $\lambda_t:\R_+\rightarrow[0,\Lambda]$ with the derivative $\lambda_t'\le0$ on $\R_+$. 
If $h_\e$ is a solution to \eqref{epnk} in $\R_+\times\T^d\times\R^d$, associated with the initial data $0\le\hine\le\Lambda$, satisfying 
\begin{equation}\label{hp-prop}
\la h_\e\ra^\beta(t,x)\ge\lambda_t {\ \ \rm in\ }\R_+\times\T^d {\quad\rm and\quad}
\int_{\R_+}\left(\lambda_t+\lambda_t'\right)\dif t=\infty,  
\end{equation}
then the solution $h_\e$ converges to the state $M_0$ in $L^2(\dif m)$ as $t\rightarrow\infty$; more precisely, there exists some universal constant $c>0$ such that for any $t>0$, we have 
\begin{equation}\label{longtimeestimate}
\|h_\e(t,\cdot,\cdot)-M_0\|^2_{L^2(\dif m)}
\lesssim \| \hine-M_0\|^2_{L^2(\dif m)}\exp\left(-c\int_0^t\left(\lambda_s+\lambda_s'\right)\dif s\right).  
\end{equation}
\end{proposition}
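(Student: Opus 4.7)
I would implement an $L^2$-hypocoercivity strategy with macro-micro decomposition, in the spirit of \cite{EGKM} and \cite{DMS}. Set $u_\e := h_\e - M_0$; by \eqref{FP2cond}, $u_\e$ has zero $\dif m$-mean and satisfies the same equation \eqref{epnk}. Decompose $u_\e = \la u_\e\ra + u_\e^\perp$.

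\textbf{Step 1 (microscopic dissipation).} Testing \eqref{epnk} against $u_\e$ in $L^2(\dif m)$, using the antisymmetry of $v\cdot\nabla_x$ on this space, the self-adjointness of $\lfp$, the identity $\lfp u_\e = \lfp u_\e^\perp$ (since $\la u_\e\ra$ is $v$-independent), the assumption $\la h_\e\ra^\beta \geq \lambda_t$, and the Gaussian Poincaré inequality $\|u_\e^\perp\|_{L^2(\dif\mu)} \leq \|\nabla_v u_\e^\perp\|_{L^2(\dif\mu)}$, I would obtain
\begin{equation*}
\frac{d}{dt}\|u_\e\|^2_{L^2(\dif m)} \leq -\frac{2\lambda_t}{\e^2}\|u_\e^\perp\|^2_{L^2(\dif m)}.
\end{equation*}

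\textbf{Step 2 (macro control via an auxiliary elliptic problem).} As the dissipation in Step 1 vanishes on the macroscopic part $\la u_\e\ra$, I would introduce $\psi(t,\cdot)\in H^1(\T^d)$, defined as the zero-mean solution of $-\Delta_x \psi = \la u_\e\ra$ on $\T^d$ (which exists uniquely since $\int_{\T^d}\la u_\e\ra\dif x = 0$), and consider the cross functional
\begin{equation*}
\mathcal{F}(t) := \int_{\T^d} \nabla_x \psi \cdot \la v h_\e\ra \dif x.
\end{equation*}
Using the macroscopic momentum balance \eqref{hydroFP2} for $\e\partial_t\la v h_\e\ra$, and \eqref{hydroFP1} to express $\e\partial_t\nabla_x\psi$ through elliptic inversion of $-\nabla_x\cdot\la v h_\e^\perp\ra$, together with the decomposition $\la v^{\otimes 2}h_\e\ra = \la h_\e\ra I + \la v^{\otimes 2}h_\e^\perp\ra$, integration by parts against $-\Delta_x\psi = \la u_\e\ra$ produces a ``Poincaré gain'' $-\|\la u_\e\ra\|^2_{L^2}$. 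The remaining contributions involve $D_x^2\psi$ and $\nabla_x\psi$, which are controlled in $L^2$ by $\|\la u_\e\ra\|_{L^2}$ via elliptic regularity on $\T^d$, together with $\la h_\e\ra^\beta\leq \Lambda^\beta$. Young's inequality then yields
\begin{equation*}
\e\frac{d}{dt}\mathcal{F}(t) \leq -\tfrac{1}{2}\|\la u_\e\ra\|^2_{L^2(\T^d)} + C(1 + \e^{-2})\|u_\e^\perp\|^2_{L^2(\dif m)}.
\end{equation*}

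\textbf{Step 3 (modified energy and Grönwall).} Define
\begin{equation*}
\mathcal{E}(t) := \|u_\e(t)\|^2_{L^2(\dif m)} + c_0\,\e\,\lambda_t\,\mathcal{F}(t),
\end{equation*}
for a small universal $c_0 > 0$. Since $|\mathcal{F}| \leq C\|\la u_\e\ra\|_{L^2}\|u_\e^\perp\|_{L^2(\dif m)} \leq C\|u_\e\|^2_{L^2(\dif m)}$ and $\lambda_t \leq \Lambda$, one has $\mathcal{E} \sim \|u_\e\|^2_{L^2(\dif m)}$ for $c_0$ sufficiently small. Differentiating and combining Steps 1--2, the $(1+\e^{-2})$-sized remainder from Step 2 (multiplied by $c_0\e\lambda_t$) is absorbed into the $\lambda_t/\e^2$-sized dissipation of Step 1, while the derivative of the weight produces $c_0\e\lambda_t'\mathcal{F}$, bounded in absolute value by $-Cc_0\lambda_t'\|u_\e\|^2$ thanks to $\lambda_t' \leq 0$ and $\e \leq 1$. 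Collecting the macroscopic and microscopic parts, I would arrive at
\begin{equation*}
\frac{d}{dt}\mathcal{E}(t) \leq -c(\lambda_t + \lambda_t')\,\mathcal{E}(t),
\end{equation*}
for some universal $c>0$. Grönwall's lemma then yields $\mathcal{E}(t) \leq \mathcal{E}(0)\exp\bigl(-c\int_0^t(\lambda_s+\lambda_s')\dif s\bigr)$, which, by equivalence of $\mathcal{E}$ with $\|u_\e\|^2_{L^2(\dif m)}$, is precisely \eqref{longtimeestimate}.

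\textbf{Main obstacle.} The delicate point is the calibration in Step 3: the coefficient of $\mathcal{F}$ must be small enough (for the $\e^{-2}$-remainder absorption) and simultaneously proportional to $\lambda_t$ so that the macroscopic and microscopic dissipations are balanced at the same order $\lambda_t$, producing a uniform-in-$\e$ rate. The appearance of $\lambda_t'$ in the final rate is an inevitable consequence of the time-dependent weight, and the hypothesis $\lambda_t'\leq 0$ is exactly what ensures that the associated error term is dominated by $\mathcal{E}$ with a sign that only modifies, rather than destroys, the exponential decay.
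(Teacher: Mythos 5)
Your proposal is correct and mirrors the paper's proof step for step: the microscopic Poincaré dissipation, the Poisson equation $-\Delta_x\psi = \la h_\e\ra - M_0$ on $\T^d$, the cross functional $\int_{\T^d}\nabla_x\psi\cdot\la vh_\e\ra\,\dif x$ (which coincides with the paper's $\left(v\cdot\nabla_xu,\,h_\e^\perp\right)$ since $\la vh_\e\ra = \la vh_\e^\perp\ra$), and the $\lambda_t$-weighted modified entropy closed by Grönwall are precisely the ingredients used in the paper, with your small parameter $c_0$ playing the role of the paper's $\delta$.
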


\begin{proof}
Since the velocity mean of the microscopic part vanishes, $\la h^\perp_\e\ra=0$, using the equation~\eqref{epnk} and the Poincaré inequality yields that
\begin{align}\label{hypoestimate0}
\frac{1}{2}\frac{\dif}{\dif t}\| h_\e-M_0\|^2_{L^2(\dif m)}
&=\frac{1}{\e^2} \left(\la h_\e\ra^\b\lfp h_\e,\, h_\e-M_0\right) =-\frac{1}{\e^2}\left(\la h_\e\ra^\b\nabla_vh^\perp_\e,\, \nabla_vh^\perp_\e\right)\nonumber\\
&\le-\frac{\lambda_t}{\e^2}\| \nabla_v h^\perp_\e\|^2_{L^2(\dif m)}
\lesssim -\frac{\lambda_t}{\e^2}\| h^\perp_\e\|^2_{L^2(\dif m)}. 
\end{align}
Now we have to recover a new entropy that would give some bound on the projection~$\la h_\e\ra-M_0$. 
		
For every test function $v\cdot\Psi(t,x)\mu$, with a vector-valued function $\Psi\in H^1_{t,x}(\R^{+}\times\T^d,\R^d)$, we write the weak formulation of \eqref{epnk} as follows	
\begin{equation*}
\frac{\dif}{\dif t}\left(v\cdot\Psi,\, h_\e\right)=\frac{1}{\e} \left( v^{\otimes2}:\nabla_x\Psi,\, h_\e\right) +\left(v\cdot\partial_t\Psi,\, h_\e\right) 
+\frac{1}{\e^2}\left(\la h_\e\ra^\beta \lfp v\cdot\Psi,\, h_\e\right). 
\end{equation*}
Taking the macro-micro decomposition into account, we obtain from the above expression 
\begin{equation}\label{FPpert2}
\begin{split}
\frac{\dif}{\dif t}\left(v\cdot\Psi,\, h^\perp_\e\right)
=&\frac{1}{\e}\left(|v_1|^2\trace(\nabla_x\Psi),\, \la h_\e\ra-M_0\right)+\frac{1}{\e}\left(v^{\otimes2}:\nabla_x\Psi,\, h^\perp_\e\right)\\
&+\left(v\cdot\partial_t\Psi,\, h^\perp_\e\right)
-\frac{1}{\e^2}\left(\la h_\e\ra^\beta v\cdot\Psi,\, h^\perp_\e\right). 
\end{split}
\end{equation}
Let us now introduce an auxiliary function $u(t,x)$: for any fixed $t\in\R_+$, $u(t,x)$ is defined as the solution of the following elliptic equation under the compatibility condition \eqref{FP2cond}, 
\begin{equation}\label{hypoPoisson0} 
	-\Delta_x u=\la h_\e\ra-M_0 {\quad\rm in\ }\T^d, 
\end{equation}
whose elliptic estimate states 
\begin{equation}\label{hypoPoisson1}
\|\nabla_x u\|_{L^2_x}+\|\nabla^2_x u\|_{L^2_x}
\lesssim \|\la h_\e\ra-M_0\|_{L^2_x}. 
\end{equation}
Besides, observing that $\la vh_\e\ra$=$\la vh^\perp_\e\ra$, from \eqref{hydroFP1}, we get 
\begin{equation*}
\e\partial_t\la h_\e\ra+\nabla_x\cdot\la vh^\perp_\e\ra=0. 
\end{equation*}
Combining this macroscopic relation with \eqref{hypoPoisson0}, we have
\begin{equation*}
\int_{\T^d}|\nabla_x\left(\partial_tu\right)|^2
		=\int_{\T^d}\partial_tu\,\partial_t\la h_\e\ra
		=-\frac{1}{\e}\int_{\T^d}\partial_tu\,\nabla_x\cdot\la vh^\perp_\e\ra
		=\frac{1}{\e}\int_{\T^d}\nabla_x\left(\partial_tu\right)\cdot\la vh^\perp_\e\ra. 
\end{equation*}
It then follows from H\"older's inequality that 
\begin{equation}\label{hypoPoisson2}
\|\nabla_x(\p_tu)\|_{L^2_x}
\le\frac{1}{\e}\|\la vh^\perp_\e\ra\|_{L^2_x}
\lesssim \frac{1}{\e}\| h^\perp_\e\|_{L^2(\dif m)}. 
\end{equation}
Choosing $\Psi=\nabla_xu$ in \eqref{FPpert2} yields
\begin{equation*}\label{recover}
\begin{split}
-\frac{1}{\e}\left(|v_1|^2\Delta_xu,\la h_\e\ra-M_0\right)
&+\frac{\dif}{\dif t}\left(v\cdot\nabla_xu,\, h^\perp_\e\right)\\
&\lesssim \left(\frac{1}{\e}\| \nabla^2_xu\|_{L^2_x}
+\|\nabla_x\left(\partial_tu\right)\|_{L^2_x}
+\frac{1}{\e^2}\|\nabla_xu\|_{L^2_x}\right)\| h^\perp_\e\|_{L^2(\dif m)}.
\end{split}
\end{equation*}
Applying \eqref{hypoPoisson0}, \eqref{hypoPoisson1} and \eqref{hypoPoisson2}, we have
\begin{equation*}
\begin{split}
\frac{1}{\e}\|\la h_\e\ra-M_0\|^2_{L^2_x}
+\frac{\dif}{\dif t}\left(v\cdot\nabla_xu,\, h^\perp_\e\right)
&\lesssim \frac{1}{\e^2}\|\la h_\e\ra-M_0\|_{L^2_x}\| h^\perp_\e\|_{L^2(\dif m)}
+\frac{1}{\e}\| h^\perp_\e\|_{L^2(\dif m)}^2.
\end{split}
\end{equation*}
By the Cauchy–Schwarz inequality, we arrive at 
\begin{equation}\label{necessario}
\|\la h_\e\ra-M_0\|^2_{L^2_x}
	+\e\frac{\dif}{\dif t}\left(v\cdot\nabla_xu,\, h^\perp_\e\right)
\lesssim \frac{1}{\e^2}\| h^\perp_\e\|_{L^2(\dif m)}^2.
\end{equation}

Then, \eqref{necessario} combined with \eqref{hypoestimate0} implies that 
\begin{align*}
\frac{\dif}{\dif t}\E_\e(t)
&\lesssim -\frac{1-\d}{\e^2}\| h^\perp_\e\|_{L^2(\dif m)}^2
  -{\d\lambda_t}\|\la h_\e\ra-M_0\|^2_{L^2_x}
  +\d\e\lambda_t'\left(v\cdot\nabla_xu,\, h^\perp_\e\right)\\
&\le -\d\lambda_t\| h_\e-M_0\|^2_{L^2(\dif m)} 
  -\d\lambda_t'\left|\left(v\cdot\nabla_xu,\, h^\perp_\e\right)\right|,
\end{align*}
where the constant $\delta\in\big(0,\frac{1}{2}\big)$ will be determined and the modified entropy $\E_\e$ is defined by 
$$\E_\e(t):= \| h_\e-M_0\|^2_{L^2(\dif m)} +{\d\e\lambda_t}\left(v\cdot\nabla_xu,\,  h^\perp_\e\right).$$ 	
Noticing \eqref{hypoPoisson1} also implies that  
\begin{equation}\label{equw}
\left|\left(v\cdot\nabla_xu,\,  h^\perp_\e\right)\right|
\lesssim \|\la h_\e\ra-M_0\|_{L^2_x}\| h^\perp_\e\|_{L^2(\dif m)}
\le \| h_\e-M_0\|^2_{L^2(\dif m)}. 
\end{equation}
It means that the modified entropy $\E_\e$ is equivalent (independent of $\e$) to the square of the $L^2(\dif m)$-distance between $h_\e$ and $M_0$, when the constant $\delta>0$ is sufficiently small. 
		
Hence, we have 
\begin{equation*}
\frac{\dif}{\dif t} \E_\e(t) 
\lesssim -\left(\lambda_t+\lambda_t'\right) \E_\e(t). 
\end{equation*}
The conclusion \eqref{longtimeestimate} then follows from Grönwall's inequality and the equivalence between $\E_\e(t)$ and $\| h_\e(t,\cdot,\cdot)-M_0\|^2_{L^2(\dif m)}$. 
\end{proof}

We pointed out that the elliptic estimate \eqref{hypoPoisson1} for the Poisson equation \eqref{hypoPoisson0} used in the above proof resulting from a Poincaré type inequality essentially relies on the compactness of the spatial domain. 
It was shown in \cite{BDM} that the related elliptic estimate can be recovered by applying the Nash inequality \cite{Nash} when the spatial domain is the whole space $\R^d$, whose argument is under an abstract setting. 
Inspired by the proof of Proposition~\ref{longtime} above, we are also able to make the construction of \cite{BDM} precise to see that the argument still works for the nonlinear equation \eqref{epnk}. 
We remark that the following algebraic decay rate is optimal in the sense it is the same as in the linear case; see Appendix A of \cite{BDM}. 

\begin{proposition}\label{longwhole}
Assume that the initial data $\hine$ is valued in $[\lambda,\Lambda]$, and satisfies $\hine-M_1\in  L^1(\R^{2d},\dif m)$ for some universal constant $M_1>0$. 
Let the function $h_\e$ valued in $[\lambda,\Lambda]$ be a solution to \eqref{epnk} in $\R_+\times\R^{2d}$ associated with $h_\e|_{t=0}=\hine$. Then, for any $t>0$, 
\begin{equation*}
\|h_\e-M_1\|_{L^2(\R^{2d},\,\dif m)}
\lesssim \left(1+\|\hine-M_1\|_{L^1(\R^{2d},\,\dif m)}\right) t^{-\frac{d}{4}}. 
\end{equation*}
\end{proposition}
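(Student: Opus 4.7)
The argument adapts the hypocoercive scheme of Proposition~\ref{longtime} to the non-compact spatial domain $\R^d$, replacing the Poincar\'e-type spectral gap available on the torus by the Nash inequality applied to the macroscopic density, in the spirit of the unbounded-domain $L^2$-hypocoercivity of~\cite{BDM}. Because $h_\e\in[\lambda,\Lambda]$ throughout $\R_+\times\R^{2d}$, the coefficient $\la h_\e\ra^\b$ is trapped between the positive universal constants $\lambda^\b$ and $\Lambda^\b$, so every differential identity of Subsection~\ref{longtimesection}---in particular the microscopic dissipation~\eqref{hypoestimate0}, the macroscopic balance laws~\eqref{hydroFP1}--\eqref{hydroFP2}, and the weak-formulation identity~\eqref{FPpert2}---transfers verbatim to the whole-space setting. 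The other ingredient, borrowed from Corollary~\ref{existwhole}, is the $L^1$-contraction
\[
\|h_\e(t)-M_1\|_{L^1(\R^{2d},\,\dif m)}\le\|\hine-M_1\|_{L^1(\R^{2d},\,\dif m)} {\ \ \rm for\ all\ } t\ge 0,
\]
which in particular controls $\|\la h_\e\ra-M_1\|_{L^1(\R^d)}$ uniformly in time by the initial datum.

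Next, I would build a modified entropy
\[
\E_\e(t):=\|h_\e-M_1\|_{L^2(\dif m)}^2+\d\,\e\,\bigl(v\cdot\nabla_xU_\e,\,h_\e^\perp\bigr),
\]
in which the auxiliary field $U_\e$ replaces the Poisson potential of Proposition~\ref{longtime}. Since the Poisson equation $-\Delta_xU=\la h_\e\ra-M_1$ is not well-posed at low $x$-frequencies on $\R^d$, following~\cite{BDM} I would regularize the inversion by $U_\e:=(I-\Delta_x)^{-1}(\la h_\e\ra-M_1)$, whence $\|\nabla_xU_\e\|_{L^2_x}\lesssim\|\la h_\e\ra-M_1\|_{L^2_x}$ and, by~\eqref{hydroFP1}, $\|\nabla_x\p_tU_\e\|_{L^2_x}\lesssim\e^{-1}\|h_\e^\perp\|_{L^2(\dif m)}$. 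Choosing $\d>0$ sufficiently small renders $\E_\e$ equivalent to $\|h_\e-M_1\|_{L^2(\dif m)}^2$ up to universal constants, and repeating the computation~\eqref{FPpert2}--\eqref{necessario} with $\Psi=\nabla_xU_\e$ produces a combined micro-plus-macro dissipation of the form
\[
\frac{\dif}{\dif t}\E_\e\le -c_1\|h_\e^\perp\|_{L^2(\dif m)}^2-c_2\bigl(\|\nabla_xU_\e\|_{L^2_x}^2+\|\Delta_xU_\e\|_{L^2_x}^2\bigr).
\]

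The final step is the Nash-type closure. Setting $f:=\la h_\e\ra-M_1$ and splitting its Fourier representation at $|\xi|=R\ge 1$, the high-frequency part $\int_{|\xi|\ge R}|\hat f|^2$ is controlled by the macroscopic dissipation term above (which equals $\int|\xi|^2(1+|\xi|^2)^{-1}|\hat f|^2$ in Fourier), while the low-frequency part satisfies $\int_{|\xi|<R}|\hat f|^2\le CR^d\|f\|_{L^1_x}^2\le CR^d\|\hine-M_1\|_{L^1(\dif m)}^2$. Combined with the Pythagorean identity $\|h_\e-M_1\|_{L^2(\dif m)}^2=\|h_\e^\perp\|_{L^2(\dif m)}^2+\|f\|_{L^2_x}^2$ and the microscopic dissipation, and after optimization in $R$, these yield the closed nonlinear ODE
\[
\frac{\dif}{\dif t}\E_\e\le-\frac{c\,\E_\e^{1+2/d}}{\bigl(1+\|\hine-M_1\|_{L^1(\dif m)}\bigr)^{4/d}},
\]
whose integration gives $\E_\e(t)\lesssim(1+\|\hine-M_1\|_{L^1(\dif m)}^2)\,t^{-d/2}$ for all $t>0$, and hence the announced algebraic rate after taking a square root and invoking the equivalence $\E_\e\sim\|h_\e-M_1\|_{L^2(\dif m)}^2$.

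The main technical obstacle is the second step: the Poisson inversion used in the torus case degenerates at zero $x$-frequency on $\R^d$, so the Bessel regularization (or an equivalent frequency split) is unavoidable, and one must carefully justify the control of $\nabla_x\p_tU_\e$ through the continuity equation~\eqref{hydroFP1}, as well as the equivalence $\E_\e\sim\|h_\e-M_1\|_{L^2(\dif m)}^2$ given the weaker $\dot H^1$-type dissipation now available on the macroscopic side. Fortunately, the uniform two-sided bound $\la h_\e\ra^\b\in[\lambda^\b,\Lambda^\b]$ ensures that the nonlinearity enters only through universal constants, so once the linear BDM-type hypocoercive machinery is in place it extends to the present nonlinear setting without essential modification.
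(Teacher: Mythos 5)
Your proposal follows essentially the same route as the paper's proof: microscopic coercivity plus the weak-formulation identity for a test function $v\cdot\Psi\mu$, a Bessel-regularized macroscopic potential (the paper solves $w-\Theta\Delta_xw=\la h_\e\ra-M_1$ with $\Theta=\la|v_1|^2\ra$, which is your $(I-\Delta_x)^{-1}$ up to scaling), a modified entropy with $\e$-weighted cross term and small parameter $\d$, the $L^1$-contraction from Corollary~\ref{existwhole}, and a Nash-type closure producing the super-linear ODE $\dot\EE_\e\lesssim-\EE_\e^{1+2/d}/(1+\|\hine-M_1\|_{L^1}^{4/d})$. The only cosmetic divergence is in the closure step: you perform a Fourier split on $f=\la h_\e\ra-M_1$ directly, while the paper first observes that $|w|$ is a subsolution of the resolvent equation to get $\|w\|_{L^1_x}\le\|\hine-M_1\|_{L^1(\dif m)}$ and then invokes Nash's inequality $\|w\|_{L^2_x}^{d+2}\lesssim\|w\|_{L^1_x}^2\|\nabla_xw\|_{L^2_x}^d$ together with $\|f\|_{L^2_x}^2\lesssim\mathcal{A}+\|w\|_{L^2_x}^2$; these are two phrasings of the same Nash-type interpolation and buy you nothing different. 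The argument is correct as sketched.
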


\begin{proof}
By the same derivation of \eqref{hypoestimate0} and \eqref{FPpert2} as in the proof of Proposition~\ref{longtime}, we have the microscopic coercivity
\begin{align}\label{hypo0w}
\frac{\dif}{\dif t}\| h_\e-M_1\|^2_{L^2(\R^{2d},\,\dif m)}
\lesssim -\frac{1}{\e^2}\| h^\perp_\e\|^2_{L^2(\R^{2d},\,\dif m)}, 
\end{align}
and the identity from the macro-micro decomposition that 
\begin{equation}\label{hypo1w}
\begin{split}
-\left(|v_1|^2\Delta_x w,\, \la h_\e\ra-M_0\right)_W &+\e\frac{\dif}{\dif t}\left(v\cdot\nabla_xw,\, h^\perp_\e\right)_W
=\left(v^{\otimes2}:\nabla_x^2w,\, h^\perp_\e\right)_W\\
&+\e\left(v\cdot\partial_t\nabla_xw,\, h^\perp_\e\right)_W-\frac{1}{\e}\left(\la h_\e\ra^\beta v\cdot\nabla_xw,\, h^\perp_\e\right)_W, 
\end{split}
\end{equation}
where $(\cdot,\cdot)_W$ denotes the $L^2(\R^{2d},\dif m)$ inner product, and the function $w(t,x)\in L_t^\infty([0,T];L_x^1\cap L_x^2(\R^d))$ is chosen to be the solution of the following ellipitc equation associated with the constant $\Theta:=\la |v_1|^2\ra$ and the macroscopic source $\la h_\e\ra-M_1$, 
\begin{equation}\label{poissonw}
w-\Theta\Delta_x w=\la h_\e\ra-M_1 {\quad\rm in\ }\R^d. 
\end{equation}
The elliptic estimate is derived by integrating \eqref{poissonw} against $-\Theta\Delta_x w$ so that 
\begin{equation}\label{ellipticw}
\begin{split}
\Theta\|\nabla_xw\|_{L_x^2(\R^d)}^2 + \Theta^2\|\nabla_x^2w\|_{L_x^2(\R^d)}^2
&= \left(-\Theta\Delta_x w,\, \la h_\e\ra-M_1\right)_W\\
&= \left(\la h_\e\ra-M_1 - w,\, \la h_\e\ra-M_1\right)_W=:\A. 
\end{split}
\end{equation}
It also follows from the same derivation as \eqref{hypoPoisson2} that 
\begin{equation*}
\|\nabla_x(\p_tw)\|_{L_x^2(\R^d)} \lesssim \frac{1}{\e}\|h^\perp_\e\|_{L^2(\R^{2d},\,\dif m)}. 
\end{equation*}
Gathering the above two estimates with \eqref{hypo1w}, we obtain 
\begin{equation*}
\begin{split}
\A + \e \frac{\dif}{\dif t}\left(v\cdot\nabla_xw,\, h^\perp_\e\right)_W
\lesssim \frac{1}{\e} \A^\frac{1}{2}\| h^\perp_\e\|_{L^2(\R^{2d},\,\dif m)}
+\|h^\perp_\e\|_{L^2(\R^{2d},\,\dif m)}^2, 
\end{split}
\end{equation*}
which implies from the Cauchy–Schwarz inequality that 
\begin{equation*}
\A + \e \frac{\dif}{\dif t}\left(v\cdot\nabla_xw,\, h^\perp_\e\right)_W
\lesssim \frac{1}{\e^2} \| h^\perp_\e\|_{L^2(\R^{2d},\,\dif m)}^2 . 
\end{equation*}
Denoting the modified entropy $\EE_\e(t):=\|h_\e-M_1\|_{L^2({\R^{2d},\,\dif m})}^2+\d\e\left(v\cdot\nabla_xw,\, h^\perp_\e\right)_W$ with a sufficient small constant $\d>0$, and using \eqref{hypo0w}, we conclude that  
\begin{equation}\label{eew}
\frac{\dif}{\dif t}\EE_\e(t)
\lesssim  -\A -\| h^\perp_\e\|_{L^2(\R^{2d},\,\dif m)}^2 . 
\end{equation}

Recalling the similar estimate \eqref{equw}, we see that $\EE_\e$ is equivalent to the square of the $L^2(\R^{2d},\dif m)$-distance between $h_\e$ and $M_1$. 
It thus suffices to recover $\la h_\e\ra-M_1$ by means of $\A$. 
By \eqref{poissonw} and the convexity of $|\cdot|$, we know that $|w|$ is a subsolution in the sense that 
\begin{equation*}
|w|-\Theta\Delta_x |w| \le |\la h_\e\ra-M_1| {\quad\rm in\ }\R^d, 
\end{equation*}
and hence $\|w\|_{L_x^1(\R^d)} \le \|\la h_\e\ra-M_1\|_{L_x^1(\R^d)}$.  
With the aid of Corollary~\ref{existwhole}, 
\begin{equation*}
\|w\|_{L_x^1(\R^d)} \le \|h-M_1\|_{L^1(\R^{2d},\,\dif m)}
\le \|\hine-M_1\|_{L^1(\R^{2d},\,\dif m)}. 
\end{equation*}
Applying \eqref{ellipticw} and the Nash inequality $\|w\|_{L_x^2(\R^d)}^{d+2}\lesssim \|w\|_{L_x^1(\R^d)}^2\|\nabla_x w\|_{L_x^2(\R^d)}^d$, we then acquire 
\begin{align*}
\|\la h_\e\ra-M_1\|_{L_x^2(\R^d)}^2
&\lesssim \A +\|w\|_{L_x^2(\R^d)}^2
\lesssim \A + \|\hine-M_1\|_{L^1(\R^{2d},\,\dif m)}^{\frac{4}{d+2}} \|\nabla_xw\|_{L_x^2(\R^d)}^{\frac{2d}{d+2}} \\
&\lesssim  \Big(\A^\frac{2}{d+2} + \|\hine-M_1\|_{L^1(\R^{2d},\,\dif m)}^{\frac{4}{d+2}}\Big)  \A^{\frac{d}{d+2}}. 
\end{align*}
Now that $\|h_\e-M_1\|_{L_x^2(\R^d)}^2\le (\Lambda+M_1)\|\hine-M_1\|_{L^1(\R^{2d},\,\dif m)}$, 
in the both cases of $\A + \|h^\perp_\e\|_{L^2(\R^{2d},\,\dif m)}^2 \lessgtr \|h_\e-M_1\|_{L^2(\R^{2d},\,\dif m)}^2$, we conclude that 
\begin{align*}
\|h_\e-M_1\|_{L^2(\R^{2d},\,\dif m)}^2
\lesssim \Big(1+\|\hine-M_1\|_{L^1(\R^{2d},\,\dif m)}^{\frac{4}{d+2}}\Big) 
(\A+\|h^\perp_\e\|_{L^2(\R^{2d},\,\dif m)}^2 )^{\frac{d}{d+2}}. 
\end{align*}
Combining this with \eqref{eew} and the equivalence between $\EE_\e$ and $\|h_\e-M_1\|_{L^2(\R^{2d},\,\dif m)}^2$, we have 
\begin{equation*}
\frac{\dif}{\dif t}\EE_\e(t) 
\lesssim -\Big(1+\|\hine-M_1\|_{L^1(\R^{2d},\,\dif m)}^{\frac{4}{d}}\Big)^{\!-1}
\EE_\e(t) ^{1+\frac{2}{d}}, 
\end{equation*}
Since $\EE_\e(0)\lesssim \|\hine-M_1\|_{L^1(\R^{2d},\,\dif m)}$, we arrive at 
\begin{align*}
\EE_\e(t)
&\lesssim \Big[\EE_\e(0)^{-\frac{2}{d}} + \Big(1+\|\hine-M_1\|_{L^1(\R^{2d},\,\dif m)}^{\frac{4}{d}}\Big)^{\!-1} t\Big]^{-\frac{d}{2}}\\
&\lesssim \Big(1+\|\hine-M_1\|_{L^1(\R^{2d},\,\dif m)}^2\Big) t^{-\frac{d}{2}}. 
\end{align*}
The proof is thus complete. 
\end{proof}

As far as the case $\e=1$ is concerned, we conclude the result of convergence to equilibrium.

\begin{proof}[Proof of Theorem \ref{wellpose} (ii)] 
Consider $g:=\mu^\frac{1}{2}h$. In view of Proposition~\ref{existence} and Corollary~\ref{existwhole} with the assumption on initial data, we know that $\lambda\le \mu^{-\frac{1}{2}}g\le\Lambda$ in $\R_+\times\O_x\times\R^d$, for $\O_x=\T^d$ or $\R^d$. 
By using Proposition~\ref{longtime} to $h=\mu^{-\frac{1}{2}}g$ with $\lambda_t=\lambda$ and $\O_x=\T^d$, we have an universal constant $c>0$ such that 
\begin{equation*}\label{gglong}
\big\| g(t)-M_0\m^\frac{1}{2}\big\|_{L^2(\T^d\times\R^d)}\lesssim e^{-2ct}.
\end{equation*}
Combining this with the Sobolev embedding and the interpolation, we derive that, for any $k\in\N$ with $k\ge d$, 
\begin{align*}
\big\|g(t)-M_0\mu^\frac{1}{2}\big\|_{{\mathcal C}^{k}(\T^d\times\R^d)}
&\lesssim_k \big\|g(t)-M_0\mu^\frac{1}{2}\big\|_{H^{2k}(\T^d\times\R^d)}\\
&\lesssim_k \big\|g(t)-M_0\mu^\frac{1}{2}\big\|_{H^{4k}(\T^d\times\R^d)}^\frac{1}{2}
\big\| g(t)-M_0\mu^\frac{1}{2}\big\|_{L^2(\T^d\times\R^d)}^\frac{1}{2}. 
\end{align*}
Since the $H^{4k}$-norm on the right hand side is bounded due to the global regularity estimate given by Proposition~\ref{smoothness}, we obtain the exponential convergence to equilibrium in each order derivative. 

The asserted result in the case $\O_x=\R^d$ is a direct consequence of Proposition~\ref{longwhole}. 
As a side remark, one is also able to upgrade the long time convergence to higher order derivatives of solutions by means of the global regularity estimate and interpolation as above; it yet gives the algebraic decay rate that is not optimal. 
\end{proof}

\subsection{Finite time asymptotics}\label{finitetimesection}
The study of macroscopic dynamics for the nonlinear kinetic model~\eqref{epnk} in this subsection relies on the regularity of the target equation~\eqref{limiteq}. 
On account of this, let us begin with mentioning some standard results for the equation~\eqref{limiteq} without proof. 
If the initial data satisfies $\lambda\le\r_{\rm in}\le\Lambda$, then such bounds are preserved along times, $\lambda\le\r\le\Lambda$, in the same spirit as Lemma~\ref{Gaussian}. Combining the parabolic De~Giorgi-Nash-Moser theory with the Schauder theory, we know that the solution $\r$ is smooth for any positive time. 
We state the a priori estimate precisely as follows, where its behavior near the initial time is taken into account in view of the standard scaling technique. 

\begin{lemma}\label{porouslemma}
Let $\rho_{\rm in}\in {\mathcal C}^{\a_0}(\T^d)$ valued in $[\lambda,\Lambda]$ with $\a_0\in(0,1)$, and $\r$ be the solution to \eqref{limiteq} in $\R_+\times\T^d$. 
Then, there is some universal constant $\a\in(0,1)$ such that 
\begin{equation}\label{porous0}
\left\|\r\right\|_{\mathcal{C}^\a(\R_+\times\T^d)}
\lesssim 1+\left\|\rho_{\rm in}\right\|_{{\mathcal C}^{\a_0}(\T^d)}.
\end{equation}
Moreover, there exists some constant $C_\r>0$ depending only on universal constants and $\left\|\rho_{\rm in}\right\|_{{\mathcal C}^{\a_0}(\T^d)}$, such that for any $t\in(0,1]$ and $x\in\T^d$, we have 
	\begin{equation}\label{porous}
	{t}^\frac{1-\a}{2}\left|\nabla_x\r(t,x)\right|
	+{t}^\frac{2-\a}{2}\left|\p_t\r(t,x)\right|
	+{t}^\frac{2-\a}{2}\left|\nabla^2_x\r(t,x)\right|
	+{t}^\frac{3-\a}{2}\left|\p_t\nabla_x\r(t,x)\right|\le C_\r; 
	\end{equation}
and for any $t\ge1$, we have 
	\begin{equation}\label{porous'}
	\left\|\nabla_x\r(t,\cdot)\right\|_{L^\infty(\T^d)}
	+\left\|\p_t\r(t,\cdot)\right\|_{L^\infty(\T^d)}
	+\left\|\nabla^2_x\r(t,\cdot)\right\|_{L^\infty(\T^d)}
	+\left\|\p_t\nabla_x\r(t,\cdot)\right\|_{L^\infty(\T^d)}\lesssim 1. 
	\end{equation}
\end{lemma}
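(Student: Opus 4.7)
The plan is to exploit that the bounds $\lambda\le\rho\le\Lambda$ render the fast diffusion equation uniformly parabolic, and then to run a standard De~Giorgi--Nash--Moser plus Schauder bootstrap. First I would apply the maximum principle to $(\rho-\Lambda)_+$ and $(\lambda-\rho)_+$, in the spirit of Lemma~\ref{Gaussian}, to obtain $\lambda\le\rho\le\Lambda$ in $\R_+\times\T^d$. With these bounds the coefficient $a(\rho):=\rho^{-\beta}$ satisfies $\Lambda^{-\beta}\le a(\rho)\le\lambda^{-\beta}$, so \eqref{limiteq} is a uniformly parabolic quasilinear equation in divergence form, equivalently written in non-divergence form as $\partial_t\rho=a(\rho)\Delta_x\rho-\beta\rho^{-\beta-1}|\nabla_x\rho|^2$.

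Next I would invoke the parabolic De~Giorgi--Nash--Moser theorem applied to $\partial_t\rho=\nabla_x\cdot(a(\rho)\nabla_x\rho)$ with $a(\rho)$ bounded and measurable. Combined with the $\mathcal{C}^{\alpha_0}$ regularity of $\rho_{\rm in}$ on the parabolic initial face $\{t=0\}\times\T^d$, this yields a global H\"older estimate \eqref{porous0} for some universal exponent $\alpha\in(0,1)$ (one may take $\alpha\le\alpha_0$). The compactness of $\T^d$ eliminates any lateral boundary effect, and H\"older regularity up to $t=0$ follows from a standard boundary H\"older argument (e.g.\ via a barrier exploiting that $\rho_{\rm in}$ stays strictly between $\lambda$ and $\Lambda$, or by extending $\rho_{\rm in}$ smoothly across $\{t=0\}$ and applying interior theory).

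Once $\rho\in\mathcal{C}^\alpha(\R_+\times\T^d)$ is known, the coefficient $a(\rho)$ is itself $\mathcal{C}^\alpha$, so classical parabolic Schauder theory applied to \eqref{limiteq} yields interior $\mathcal{C}^{2+\alpha,\,1+\alpha/2}$ bounds on any cylinder compactly contained in $(0,\infty)\times\T^d$. To obtain the weighted estimates \eqref{porous} near $t=0$, I would invoke the standard parabolic rescaling: for $(t_0,x_0)\in(0,1]\times\T^d$, set $r:=\tfrac12 t_0^{1/2}$, consider the parabolic cylinder $Q_r(t_0,x_0):=(t_0-r^2,t_0]\times B_r(x_0)\subset(0,1]\times\T^d$, and apply the interior Schauder estimate there. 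The scaling yields
\begin{equation*}
r\,|\nabla_x\rho(t_0,x_0)|+r^{2}\,|\nabla_x^2\rho(t_0,x_0)|+r^{2}\,|\partial_t\rho(t_0,x_0)|\;\lesssim\; r^{\alpha}\,[\rho]_{\mathcal{C}^\alpha(Q_{2r}(t_0,x_0))},
\end{equation*}
which, combined with the global H\"older bound \eqref{porous0}, produces exactly the $t_0^{(1-\alpha)/2}$ and $t_0^{(2-\alpha)/2}$ weights in \eqref{porous}. For the mixed derivative $\partial_t\nabla_x\rho$ I would differentiate the equation once in $x$ and reapply Schauder to $\nabla_x\rho$, which satisfies a linear parabolic equation with $\mathcal{C}^\alpha$ coefficients built from $\rho$ and $\nabla_x\rho$; the same rescaling gives the $t_0^{(3-\alpha)/2}$ weight. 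For \eqref{porous'} the identical argument applies on fixed-size cylinders $Q_{1/2}(t_0,x_0)$ with $t_0\ge 1$, and no degeneracy in the weights appears.

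The main technical obstacle is really the H\"older regularity all the way down to the initial time with exponent matching $\alpha_0$; once \eqref{porous0} is in hand, the weighted derivative estimates near $t=0$ are a routine scaling exercise, and the large-$t$ estimates are immediate from interior parabolic regularity on unit cylinders together with the uniform-in-time $L^\infty$ bound on $\rho$.
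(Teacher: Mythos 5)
Your proof proposal matches the approach the paper itself indicates: the paper states Lemma~\ref{porouslemma} explicitly ``without proof,'' remarking only that the bounds $\lambda\le\rho\le\Lambda$ are preserved by a maximum principle argument, that De~Giorgi--Nash--Moser plus Schauder theory yield smoothness for positive times, and that the weighted estimates near $t=0$ follow from the standard parabolic scaling technique. Your write-up is a fleshed-out version of precisely this strategy (uniform parabolicity from the $L^\infty$ bounds, De~Giorgi--Nash--Moser for the global H\"older estimate up to $t=0$, Schauder bootstrap for higher derivatives, and rescaling on cylinders $Q_{t_0^{1/2}/2}(t_0,x_0)$ to obtain the weights), so it is consistent with the paper's intended argument.
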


We measure the distance between solutions to the scaled nonlinear kinetic model~\eqref{epfnk} and solutions to the fast diffusion equation~\eqref{limiteq} by the relative phi-entropy functional $\H_\b$ (see Definition~\ref{phi-entropy}). 
The following lemma shows the effectiveness of the relative phi-entropy for measuring $L^2$-distance, by virtue of the uniform convexity of $\varphi_\b$. 
It can be seen as a simple version of the Csiszár-Kullback inequality on the relative entropy. 
We give its statement below with a proof taken from \cite[Proposition 2.1]{DL} for the sake of completeness. 
\begin{lemma}\label{entropydist}
Let $h_1$ and $h_2$ be two functions valued in $[0,\Lambda]$. Then, we have
\begin{equation}\label{entropydist1}
		\H_\b(h_1|h_2)\ge \left(1-\frac{\b}{2}\right)\Lambda^{-\b} \|h_1-h_2\|^2_{L^2\left(\dif m\right)}. 
\end{equation}
If we additionally assume the lower bound that $ h_1,h_2\ge\lambda$, then 
\begin{equation*}\label{entropydist2}
		\H_\b(h_1|h_2)\le \left(1-\frac{\b}{2}\right)\lambda^{-\b} \|h_1-h_2\|^2_{L^2\left(\dif m\right)}. 
\end{equation*}
\end{lemma}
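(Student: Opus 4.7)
The plan is to control the pointwise integrand in the definition of $\H_\b(h_1|h_2)$ by $(h_1-h_2)^2$ using a second-order Taylor expansion of $\varphi_\b$ with integral remainder, then integrate against $\dif m$; both bounds will reduce to sharp pointwise control of $\varphi_\b''$.

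First I would record that a direct differentiation of the definition in Definition~\ref{phi-entropy} yields the unified expression
$$\varphi_\b''(z)=(2-\b)\,z^{-\b}\quad\text{for every }\b\in[0,1]\text{ and }z>0,$$
so that in particular $\varphi_\b$ is convex on $\R_+$. Next, for any $a\ge 0$ and $b>0$, Taylor's formula with integral remainder (equivalently, an integration by parts of $\int_b^a (a-u)\varphi_\b''(u)\dif u$ followed by the change of variables $u=(1-s)b+sa$) produces the pointwise Bregman identity
$$\varphi_\b(a)-\varphi_\b(b)-\varphi_\b'(b)(a-b)=(a-b)^2\int_0^1(1-s)\,\varphi_\b''\bigl((1-s)b+sa\bigr)\dif s,$$
which holds regardless of whether $a$ is larger or smaller than $b$.

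For the lower estimate, I would use that $(1-s)b+sa$ lies between $\min(a,b)$ and $\max(a,b)$, so under the hypothesis $0\le h_1,h_2\le\Lambda$ one has $\varphi_\b''\bigl((1-s)h_2+sh_1\bigr)\ge(2-\b)\Lambda^{-\b}$ pointwise; combined with $\int_0^1(1-s)\dif s=1/2$ this yields
$$\varphi_\b(h_1)-\varphi_\b(h_2)-\varphi_\b'(h_2)(h_1-h_2)\ge \left(1-\tfrac{\b}{2}\right)\Lambda^{-\b}(h_1-h_2)^2,$$
whose integration against $\dif m$ is exactly \eqref{entropydist1}. The upper bound follows from the same identity with the reverse pointwise estimate $\varphi_\b''\bigl((1-s)h_2+sh_1\bigr)\le(2-\b)\lambda^{-\b}$, which is available once both $h_1$ and $h_2$ are bounded below by $\lambda$.

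I do not foresee a genuine obstacle here; the argument is just a softened uniform-convexity estimate for $\varphi_\b$. The only technical point worth flagging is the integrability of $\varphi_\b''$ at the origin in the Taylor remainder: for $\b\in[0,1)$ the density $u^{-\b}$ is integrable near $0$ and the identity extends without restriction, while for $\b=1$ the standing assumption $h_2>0$ from Definition~\ref{phi-entropy} precisely removes the singularity.
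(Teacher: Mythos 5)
Your proof is correct and takes essentially the same approach as the paper: both arguments hinge on a second-order Taylor expansion of the Bregman integrand together with the pointwise identity $\varphi_\b''(z)=(2-\b)z^{-\b}$, and then convert upper/lower bounds on the arguments into reverse bounds on $\varphi_\b''$. The only cosmetic difference is that you use the integral form of the remainder directly at the base point $h_2$, whereas the paper invokes the Lagrange (mean-value) form after implicitly exploiting the homogeneity $\varphi_\b(a)-\varphi_\b(b)-\varphi'_\b(b)(a-b)=b^{2-\b}\varphi_\b(a/b)$ to reduce to the expansion at $z=1$; your version is, if anything, a bit more self-contained since it does not rely on that special structure of $\varphi_\b$.
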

	
\begin{proof}
Since $\varphi_\b(1)=\varphi_\b'(1)=0$ and $\b\in[0,1]$, for any $z\in\R_+$, there exists $\xi_z\in\R_+$ lying between $1$ and $z$ so that 
\begin{equation*}
\varphi_\b(z)=\frac{1}{2}\varphi''_\b(\xi_z)(z-1)^2=\frac{2-\b}{2}\xi_z^{-\b}(z-1)^2. 
\end{equation*}
Since $\min\{z,1\}\le\xi_z\le\max\{z,1\}$, we have 
\begin{align*}
\int_{\T^d\times\R^d} \max\left\{h_1^{-\b},h_2^{-\b}\right\}|h_1-h_2|^2\dif m \le&\; \frac{2}{2-\b}\H_\b(h_1|h_2)\\
\le&\; \int_{\T^d\times\R^d} \min\left\{h_1^{-\b},h_2^{-\b}\right\}|h_1-h_2|^2\dif m,
\end{align*}
which implies the desired results by using the boundedness of $h_1,h_2$. 
\end{proof}

Let us consider the finite time diffusion asymptotics. 
\begin{proposition}\label{finitetime}
Let $\rho_{\rm in}\in {\mathcal C}^{\a_0}(\T^d)$ valued in $[\lambda,\Lambda]$ with $\a_0\in(0,1)$, and the sequence of functions $\left\{\hine\right\}_{\e\in(0,1)}\subset {\mathcal C}^{\a_0}(\T^d\times\R^d)$ satisfying 
		\begin{equation*}
		\la\hine\ra\ge\lambda{\ \ \rm in\ }\T^d {\quad\rm and\quad} 
		0\le\hine \le \Lambda {\ \ \rm in\ }\T^d\times\R^d. 
		\end{equation*}
Let $h_\e$ be the solutions of \eqref{epnk} associated with these initial data. 
Then, there exist some universal constants $\a\in(0,1)$, $C>0$, and some constant $C_\rho>0$ depending only on universal constants and $\left\|\rho_{\rm in}\right\|_{{\mathcal C}^{\a_0}(\T^d)}$, such that for any $\e\in(0,1)$ and for any $t\in[\underline{T},1]$ with $\underline{T}\in(0,1)$, the following estimate holds, 
\begin{align}\label{finitelimit}
\H_\b(h_\e|\rho)(t)
\le C_\r \H_\b(h_\e|\rho)(\underline{T})+C_\r \e \left({t}^{\frac{\a-1}{2}} +\e{t}^{\frac{\a-2}{2}}\right), 
\end{align}
where $\rho(t,x)$ is the solution to \eqref{limiteq} associated with the initial data~$\rho_{\rm in}$; 
and for any $t\ge1$, we have 
\begin{align}\label{finitelimit'}
\H_\b(h_\e|\rho)(t)
\le \left[\H_\b(h_\e|\rho)(t)(1)+ C\e\left(1+t^\frac{1}{2}\right)\right]e^{C t}. 
\end{align}
\end{proposition}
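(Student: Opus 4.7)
\textbf{(Plan)} The approach is the relative $\varphi$-entropy method. I would differentiate $\H_\b(h_\e|\rho)$ along solutions, invoke \eqref{epnk} for $\p_t h_\e$ and \eqref{limiteq} for $\p_t\rho$, and integrate by parts (in $v$ using self-adjointness of $\lfp$ against $\dif\mu$, in $x$ using the transport operator). This produces an identity of the form
\begin{equation*}
\tfrac{d}{dt}\H_\b(h_\e|\rho) = -\tfrac{1}{\e^2}D - \tfrac{1}{\e}C - T,
\end{equation*}
where $D := \int\la h_\e\ra^\b\varphi''_\b(h_\e)|\nabla_v h_\e|^2\dif m \ge 0$ is the kinetic dissipation, $C := \int\varphi''_\b(\rho)\nabla_x\rho\cdot\la v h_\e^\perp\ra\dif x$ the transport cross term, and $T := \int\varphi''_\b(\rho)\p_t\rho(\la h_\e\ra-\rho)\dif x$ the $\rho$-evolution contribution. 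The negativity of $-\tfrac{1}{\e^2}D$ and the lower bounds $\varphi''_\b(h_\e), \la h_\e\ra^\b \ge c>0$ (from the assumed upper bound on $h_\e$ and the propagated lower bound on $\la h_\e\ra$) furnish the entropy dissipation to be spent.

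To extract $\e$-smallness from $-\tfrac{1}{\e}C$, I would invoke the second macroscopic identity \eqref{hydroFP2}, which reads
\begin{equation*}
\la v h_\e\ra = -\e\la h_\e\ra^{-\b}\nabla_x\cdot\la v^{\otimes2}h_\e\ra - \e^2\la h_\e\ra^{-\b}\p_t\la v h_\e\ra,
\end{equation*}
and expand $\nabla_x\cdot\la v^{\otimes2}h_\e\ra = \nabla_x\la h_\e\ra + \nabla_x\cdot\la(v^{\otimes2}-I)h_\e^\perp\ra$. Substituting, the principal term $\int\varphi''_\b(\rho)\nabla_x\rho\cdot\la h_\e\ra^{-\b}\nabla_x\la h_\e\ra\dif x$ is rewritten via $\la h_\e\ra^{-\b}\nabla_x\la h_\e\ra = \tfrac{1}{1-\b}\nabla_x\la h_\e\ra^{1-\b}$ (or $\nabla_x\log\la h_\e\ra$ when $\b=1$), so that no bare $\nabla_x\la h_\e\ra$ survives, and then integrated by parts on $\T^d$ to transfer derivatives onto $\rho$ (using $\Delta_x\varphi'_\b(\rho) = (2-\b)\p_t\rho$). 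Combining this with $-T$ via Taylor expansion of $\varphi'_\b(\la h_\e\ra)$ about $\rho$, the principal contribution rearranges into a total time derivative plus a remainder quadratic in $\la h_\e\ra-\rho$, which Lemma~\ref{entropydist} controls by $\H_\b(h_\e|\rho)$. The microscopic correction involving $\la(v^{\otimes2}-I)h_\e^\perp\ra$ is absorbed into $\tfrac{1}{\e^2}D$ via Cauchy--Schwarz, Poincar\'e in $v$, and Young's inequality, leaving an $\e^2$-weighted error proportional to squared $\rho$-regularity. The explicit $\e^2$-factor on $\p_t\la v h_\e\ra$ is treated by integration by parts in time over $[\underline T,t]$.

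Collecting contributions and invoking Lemma~\ref{porouslemma} (which gives $\|\nabla_x\rho\|_{L^\infty_x}\lesssim C_\r t^{(\a-1)/2}$, $\|\p_t\rho\|_{L^\infty_x}, \|\nabla_x^2\rho\|_{L^\infty_x}\lesssim C_\r t^{(\a-2)/2}$ for $t\in(0,1]$, and uniform bounds for $t\ge1$) produces a differential inequality
\begin{equation*}
\tfrac{d}{dt}\H_\b(h_\e|\rho) \le C_\r\,\H_\b(h_\e|\rho) + \e\, A_\r(t) + \e^2\, B_\r(t),
\end{equation*}
with $A_\r(t)\lesssim t^{(\a-3)/2}$ and $B_\r(t)\lesssim t^{(\a-4)/2}$ (integrable on $[\underline T,t]$ with antiderivatives of orders $t^{(\a-1)/2}$ and $t^{(\a-2)/2}$). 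Gr\"onwall's inequality on $[\underline T, 1]$ then yields \eqref{finitelimit}. For $t\ge 1$ the $\rho$-regularity is uniform in time, so the analogous computation (with a $t^{-1/2}$ contribution coming from the time integration-by-parts on the $\e^2\p_t\la v h_\e\ra$ term) gives $\tfrac{d}{dt}\H_\b\le C\,\H_\b + C\e(1+t^{-1/2})$, and Gr\"onwall on $[1,t]$ delivers \eqref{finitelimit'}.

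The main obstacle is the bookkeeping in the second paragraph. Because $\nabla_x\la h_\e\ra$ is not controlled uniformly in $\e$, one must relentlessly rewrite every occurrence as the gradient of a bounded quantity ($\la h_\e\ra^{1-\b}/(1-\b)$ or $\log\la h_\e\ra$), and then move derivatives onto $\rho$ via integration by parts on the torus. The resulting bookkeeping must track two separate scales of smallness ($\e$ from Fick's law and $\e^2$ from the inertial $\p_t\la v h_\e\ra$), and must also absorb the singular regularity of $\rho$ near $t=0$ that forces the weights $t^{(\a-1)/2}$ and $t^{(\a-2)/2}$ in \eqref{finitelimit}. This is precisely why the finite-time bound starts from $\underline T>0$, and why the uniform regularity regime $t\ge1$ is separated out to give the cleaner \eqref{finitelimit'}.
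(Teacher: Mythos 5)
Your overall framework — differentiate the relative $\varphi$-entropy, invoke the macroscopic moment equations \eqref{hydroFP1}, \eqref{hydroFP2}, control the remainder via the singular regularity weights of Lemma~\ref{porouslemma}, and close with Gr\"onwall — is indeed the paper's strategy, and your $D,C,T$ decomposition of $\frac{\dif}{\dif t}\H_\b(h_\e|\rho)$ is the correct starting point. But there is a genuine gap in the middle paragraph, at the step where you claim that after substituting \eqref{hydroFP2} into the cross term, rewriting $\la h_\e\ra^{-\b}\nabla_x\la h_\e\ra=\tfrac{1}{1-\b}\nabla_x\la h_\e\ra^{1-\b}$, integrating by parts, and combining with $-T$, ``the principal contribution rearranges into a total time derivative plus a remainder quadratic in $\la h_\e\ra-\rho$.''

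If you carry this out, the principal term becomes $-\int\p_t\rho\,\varphi'_\b(\la h_\e\ra)\dif x$, and adding $-T=-\int\varphi''_\b(\rho)\p_t\rho(\la h_\e\ra-\rho)\dif x$ and Taylor-expanding $\varphi'_\b(\la h_\e\ra)$ about $\rho$ leaves
\begin{equation*}
-\int\p_t\rho\,\varphi'_\b(\rho)\dif x \;-\;2\int\varphi''_\b(\rho)\p_t\rho\,(\la h_\e\ra-\rho)\dif x \;+\; O\big(\|\p_t\rho\|_{L^\infty}\H_\b\big).
\end{equation*}
The first term equals $(2-\b)\|\rho^{-\b}\nabla_x\rho\|^2_{L^2_x}\ge0$, which after time integration contributes an $O(1)$ quantity rather than $O(\e)$; the second is \emph{linear} in $\la h_\e\ra-\rho$ and cannot be absorbed by $\H_\b$ (which controls the square of the $L^2$-distance), and by Young's inequality it forces a weight $\|\p_t\rho\|_{L^\infty}^2\sim t^{\a-2}$ that is not integrable near $t=0$. (One can indeed recast $-\tfrac1\e C-T$ as an exact total time derivative, but the derived quantity is $O(1)$ in $\e$, so this observation reproduces only the trivial reformulation $\frac{\dif}{\dif t}(\H_\b-[\ldots])=-\e^{-2}D$ and gives no smallness.) A further difficulty with your route is that the microscopic correction $\la h_\e\ra^{-\b}\nabla_x\cdot\la(v^{\otimes2}-I)h_\e^\perp\ra$ inherits the factor $\la h_\e\ra^{-\b}$, whose $x$-gradient is uncontrolled, so the integration by parts you would need in order to absorb it into $\e^{-2}D$ is not directly available.

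What is missing is the completing-the-square device of the paper, which couples the kinetic dissipation with the cross term \emph{before} any substitution: from \eqref{entropydissipation} one bounds the dissipation by $-\tfrac{2-\b}{\e^2}\|\la vh_\e\ra\|^2_{L^2_x}$, sets $Q_\e:=\e^{-1}\la vh_\e\ra+\rho^{-\b}\nabla_x\rho$, and rewrites the momentum balance \eqref{hydroFP2} as $\nabla_x\la h_\e\ra=-\e^{-1}\la h_\e\ra^{\b}\la vh_\e\ra+R_\e$ (note $\la h_\e\ra^{\b}$, not $\la h_\e\ra^{-\b}$). Upon substituting into the term coming from the $\rho$-entropy cross-product, the $\|\rho^{-\b}\nabla_x\rho\|^2$ contributions from the $\rho$-entropy dissipation, the square completion, and the substitution add up to $\big(\rho^{-\b}\la h_\e\ra^{\b}-\b\rho^{-1}\la h_\e\ra-1+\b,\;\rho^{-2\b}|\nabla_x\rho|^2\big)$, which the elementary inequality $|z^\b-\b z-1+\b|\le|z-1|^2$ converts into a genuinely quadratic quantity absorbable into $\H_\b$ by Lemma~\ref{entropydist}. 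The $Q_\e$-cross term is absorbed into $-\|Q_\e\|^2$ by Cauchy--Schwarz, and the remainder $(R_\e,\rho^{-2\b}\nabla_x\rho)$ is the only $O(\e)$-piece — handled, as you correctly anticipate, by one spatial and one temporal integration by parts together with \eqref{porous}. Your plan can likely be repaired by performing this square completion (or an equivalent algebraic rearrangement), but as written the cancellation of the $O(1)$ and the linear-in-$(\la h_\e\ra-\rho)$ terms is asserted rather than achieved, and that is the crux of the proof.
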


\begin{proof}
For $\b\in[0,1)$, the phi-entropy of $h_\e$ relative to $\rho$ reads 
\begin{align*}
\H_\b(h_\e|\rho)=\H_\b(h_\e|1)-\H_\b(\r|1)
-\frac{2-\b}{1-\b}\left(\la h_\e\ra-\r,\;\r^{1-\b}-1\right).
\end{align*}
As far as the entropy $\H_\b(h_\e|1)$ is concerned, the entropy dissipation is derived by the equation~\eqref{epnk}, integration by parts and using Hölder's inequality $\la\nabla_vh_\e\ra^2 \le \la h_\e\ra^{\b}\la h_\e^{-\b}|\nabla_vh_\e|^2\ra$, 
\begin{align}\label{entropydissipation}
\frac{\dif}{\dif t}\H_\b(h_\e|1)
&=\frac{2-\b}{1-\b}\left(h^{1-\b},\;h_t\right)
=-\frac{2-\b}{\e^2} \left(h_\e^{-\b}\nabla_vh_\e,\; \la h_\e\ra^\b\nabla_vh_\e\right)\nonumber\\
&\le-\frac{2-\b}{\e^2}\|\la\nabla_vh_\e\ra\|^2_{L^2_x}=-\frac{2-\b}{\e^2}\|\la vh_\e\ra\|^2_{L^2_x}.
\end{align}
In view of the limiting equation~\eqref{limiteq}, we have 
\begin{align}\label{entropy2}
\frac{\dif}{\dif t}\H_\b(\r|1)
=\frac{2-\b}{1-\b}\left(\r^{1-\b},\;\p_t\r\right)
=-(2-\b)\left(\r^{-\b}\nabla_x\r,\;\r^{-\b}\nabla_x\r\right).
\end{align}
A direct computation with the macroscopic equation~\eqref{hydroFP1} and the equation~\eqref{limiteq} leads to
\begin{align}\label{entropy3}
\frac{\dif}{\dif t}&\left(\la h_\e\ra-\r,\;\r^{1-\b}-1\right)
=\left(\r^{1-\b},\;\p_t\la h_\e\ra\right)
 +\left((1-\b)\r^{-\b}\la h_\e\ra-(2-\b)\r^{1-\b},\;\p_t\r\right)\nonumber\\
&=\frac{1-\b}{\e}\left(\r^{-\b}\nabla_x\r,\;\la vh_\e\ra\right)
 -\left((1-\b)\nabla_x\big(\r^{-\b}\la h_\e\ra\big)-(2-\b)\r^{-\b}\nabla_x\r,\;\r^{-\b}\nabla_x\r\right)
\end{align}
The evolution of $\H_\b(h_\e|\rho)$ is then estimated by combining \eqref{entropydissipation}, \eqref{entropy2} and \eqref{entropy3}, 
\begin{align*}
\frac{1}{2-\b}\frac{\dif}{\dif t}\H_\b(h_\e|\rho)
\le& -\frac{1}{\e^2}\|\la vh_\e\ra\|^2_{L^2_x}
  -\frac{1}{\e} \left(\la vh_\e\ra,\; \r^{-\b}\nabla_x\rho\right)
+\left(\nabla_x\big(\r^{-\b}\la h_\e\ra-\r^{1-\b}\big),\;\r^{-\b}\nabla_x\rho\right)\\
=&-\left\|\e^{-1}\la vh_\e\ra+\rho^{-\b}\nabla_x\r\right\|^2_{L^2_x}
   +\left(\e^{-1}\la vh_\e\ra,\;\rho^{-\b}\nabla_x\r\right)\\
&+\left(\nabla_x\la h_\e\ra+ 
\b\big(1-\r^{-1}\la h_\e\ra\big)\nabla_x\r,\;\rho^{-2\b}\nabla_x\rho\right)
\end{align*}
We remark that the above inequality also holds for $\b=1$ by a similar computation. 
Abbreviate $Q_\e:=\e^{-1}\la vh_\e\ra+\rho^{-\b}\nabla_x\rho$, $R_\e:=-\nabla_x\cdot\la v\otimes\nabla_vh_\e\ra-\e\partial_t\la vh_\e\ra$ and write the macroscopic equation \eqref{hydroFP2} in the form of
$\nabla_x\la h_\e\ra = -\e^{-1}\la h_\e\ra^{\b}\la vh_\e\ra + R_\e$. It then turns out that 
\begin{align}\label{retiveen0}
\frac{1}{2-\b}\frac{\dif}{\dif t}\H_\b(h_\e|\rho)
\le& -\|Q_\e\|^2_{L^2_{x}} 
+\left(\big(1-\r^{-\b}\la h_\e\ra^{\b}\big)Q_\e,\;\rho^{-\b}\nabla_x\rho\right)
+\left(R_\e,\;\rho^{-2\b}\nabla_x\rho\right)  \nonumber\\
&+\left(\r^{-\b}\la h_\e\ra^{\b}-\b\r^{-1}\la h_\e\ra -1+\b 
,\;\rho^{-2\b}\left|\nabla_x\r\right|^2\right)  \nonumber\\
\le&\;  2\left\|\rho^{-1-\b}|\nabla_x\rho|\right\|_{L^\infty_{t,x}}^2 \|\la h_\e\ra-\rho\|^2_{L^2_x}
+\left(R_\e,\;\rho^{-2\b}\nabla_x\rho\right),  
\end{align}
where for the second inequality, we used the Cauchy–Schwarz inequality 
\begin{equation*}
2\left(\big(1-\r^{-\b}\la h_\e\ra^{\b}\big)Q_\e,\;\rho^{-\b}\nabla_x\rho\right)
\le \|Q_\e\|^2_{L^2_{x}} + \left(\big|1-\r^{-\b}\la h_\e\ra^{\b}\big|^2,\;\big|\rho^{-\b}\nabla_x\rho\big|^2\right),
\end{equation*}
and the following two elementary inequalities, with $\b\in[0,1]$, 
\begin{equation*}
		\big|z^{\b}-1\big|\le|z-1| {\quad\rm and\quad} 
		\big|z^{\b}-\b z-1+\b\big|\le|z-1|^2, {\quad\rm for\ any\ } z\in\R_+. 
\end{equation*}
In view of Hölder's inequality and the inequality~\eqref{entropydist1} given in Lemma~\ref{entropydist}, we know that
\begin{align*}
\|\la h_\e\ra-\rho\|^2_{L^2_x}\le \|h_\e-\rho\|^2_{L^2(\dif m)}
\le 2\Lambda^{\b}\H_\b(h_\e|\rho). 
\end{align*}
Combining this with \eqref{retiveen0} and \eqref{porous}, we derive that, for any $t\in(0,1]$, 
\begin{align}\label{retiveen1}
\frac{\dif}{\dif t}\H_\b(h_\e|\rho)
\le C_\r{t}^{\a-1} \H_\b(h_\e|\rho)
	+2\left(R_\e,\;\rho^{-2\b}\nabla_x\rho\right), 
\end{align}
where the constants $\a\in(0,1)$ and $C_\r>0$ are provided by Lemma~\ref{porouslemma}. 

We point out that, after integrating in time, the remainder term in \eqref{retiveen1} involving $R_\e$ is of order $O(\e)$ due to the control of the entropy production and the regularity of the limiting equation. Indeed, 
\begin{align*}
\int_0^t\left(R_\e,\;\rho^{-2\b}\nabla_x\rho\right)
=& \int_0^t\left(\la v\otimes\nabla_vh_\e\ra,\;\nabla_x\big(\rho^{-2\b}\nabla_x\rho\big)\right)
  +\e\int_0^t\left(\la vh_\e\ra,\;\partial_t\big(\rho^{-2\b}\nabla_x\rho\big)\right) \\
&-\e\left(\la vh_\e\ra,\;\rho^{-2\b}\nabla_x\rho\right)(t) 
  +\e\left(\la vh_\e\ra,\;\rho^{-2\b}\nabla_x\rho\right)(0)\\
\lesssim&\; \left(\left\|\nabla_x\big(\rho^{-2\b}\nabla_x\rho\big)\right\|_{L^\infty_{t,x}} +\e\left\|\partial_t\big(\rho^{-2\b}\nabla_x\rho\big)\right\|_{L^\infty_{t,x}}\right) 
\int_0^t\left\|\la vh_\e\ra\right\|_{L^2_x}\\
&+\e\left\|\rho^{-2\b}\nabla_x\rho\right\|_{L^\infty_{t,x}}\|\la vh_\e\ra\|_{L^\infty\left([0,T];L^2_x\right)}. 
\end{align*}
It then follows from \eqref{porous}, \eqref{entropydissipation} and the global upper bound of $h_\e$ (Lemma~\ref{Gaussian}) that, for any $t\in(0,1]$, 
\begin{align*}
\int_0^t\left(R_\e,\rho^{-2\b}\nabla_x\rho\right)
\le&\; C_\r\left({t}^{\frac{\a-1}{2}}+\e{t}^{\frac{\a-2}{2}}\right)
\left(\int_0^t\left\|\la vh_\e\ra\right\|_{L^2_{x}}^2\right)^\frac{1}{2} 
+C_\r\e{t}^{\frac{\a-1}{2}} \nonumber\\
\le&\; C_\r\left(\e{t}^{\frac{\a-1}{2}} +\e^2{t}^{\frac{\a-2}{2}}\right) \sup\nolimits_{s\in[0,t]}\sqrt{\H_\b(h_\e|1)(s)} 
+C_\r\e{t}^{\frac{\a-1}{2}} \nonumber\\
\le&\; C_\r\left(\e{t}^{\frac{\a-1}{2}} +\e^2{t}^{\frac{\a-2}{2}}\right)
+C_\r\e{t}^{\frac{\a-1}{2}}. 
\end{align*}	
Combining this estimate with \eqref{retiveen1}, as well as Grönwall's inequality, we conclude \eqref{finitelimit}. 
Besides, we arrive at \eqref{finitelimit'}, if we apply Lemma~\ref{porouslemma} with \eqref{porous'} instead of \eqref{porous} in above argument. This completes the proof. 
\end{proof}

We are now in a position to conclude the global in time diffusion asymptotics. 
\begin{proof}[Proof of Theorem \ref{limit}]
We are going to combine Proposition~\ref{longtime}, \ref{finitetime} with a delicate analysis on the relative entropy around the initial time to get Theorem~\ref{limit}. 
The analysis is based on the barrier function method. 
Let us assume the constant $\a\in(0,1)$ provided by Proposition~\ref{finitetime}. 

\medskip\noindent{\textbf{Step 1.}}	Pointwise estimate. \\
Let us fix $\d\in(0,1)$, $(x_1,v_1)\in\T^d\times B_{R}$ with $R>0$, and consider the function 
\begin{equation*}
\overline{h}(t,x,v):=C_1t+C_2\left(|x-x_1-\e^{-1}tv|^2+|v-v_1|^2\right)
\end{equation*}
where the constants $C_1,C_2>0$ are to be determined.  
For any $t\le\frac{\e\d}{4(1+R)}$, we have
\begin{equation*}
\overline{h} \ge C_2\left(|x-x_1|^2+|v-v_1|^2-2\e^{-1}t|x-x_1||v|\right)\ge\frac{C_2\d^2}{2}=\Lambda
{\quad\rm on\ } \p B_\d(x_1,v_1), 
\end{equation*} 
where we chose $C_2:=2\d^{-2}\Lambda$. 
For any $(x,v)\in B_\d(x_1,v_1)$, 
\begin{equation*}
		|\la h\ra^\b \lfp\overline{h}|
		\lesssim |\Delta_v\overline{h}|+ |v\cdot\nabla_v\overline{h}|
		\lesssim \d^{-2}(1+R^2)\left(1+\e^{-2}t^2\right). 
\end{equation*}
Therefore, for any $t\le\frac{\e\d}{4(1+R)}$ and $(x,v)\in B_\d(x_1,v_1)$, 
\begin{equation*}
\left(\partial_t+\e^{-1}v\cdot\nabla_x-\e^{-2}\la h\ra^\b\lfp\right)\overline{h}
\ge C_1-C_0\e^{-2}\d^{-2}(1+R^2)\left(1+\e^{-2}t^2\right) \ge 0, 
\end{equation*}
where the constant $C_0>0$ is universal and we chose $C_1:=2C_0\e^{-2}\d^{-2}(1+R^2)$. 
Then, the maximum principle implies that, for any $t\le\frac{\e\d}{4(1+R)}$ and $(x,v)\in B_\d(x_1,v_1)$, 
\begin{equation}\label{coninitialbarrier}
|h_\e(t,x,v)-h_{\e,\rm in}(x_1,v_1)|
\le \overline{h}(t,x,v)+\sup\nolimits_{B_\d(x_1,v_1)}|\hine(x,v)-\hine(x_1,v_1)|. 
\end{equation}
In particular, for any $t\le\frac{\e\d}{4(1+R)}$ and $(x_1,v_1)\in\T^d\times B_{R}$, 
\begin{equation}\label{limitbarrier}
|h_\e(t,x_1,v_1)-h_{\e,\rm in}(x_1,v_1)|
\lesssim \e^{-2}\d^{-2}(1+R^2)t +\|h_{\e,\rm in}\|_{{\mathcal C}^{\a_0}(\T^d\times\R^d)}\d^\a. 
\end{equation}

As far as the solution $\r$ to the limiting equation~\eqref{limiteq} is concerned, using the H\"older estimate~\eqref{porous0} in Lemma~\ref{porouslemma}, we  derive that, for any $t\in\R_+$, 
\begin{align}\label{limitholder}
\|\r(t)-\r_{\rm in}\|_{L^\infty(\T^d)}
\lesssim \left(1+\|\r_{\rm in}\|_{{\mathcal C}^{\a_0}(\T^d)}\right)t^\a. 
\end{align}

\medskip\noindent{\textbf{Step 2.}}	Estimate of the relative entropy around the initial time. \\
Let us restrict our attention on the time interval $[0,\underline{T}]$ with $\underline{T}\in(0,1)$ to be determined. 
Compute the relative entropy $\H_\b(h_\e|\rho)$ in terms of initial data as follows, 
\begin{equation}\label{limitinitial0}
\begin{split}
\H_\b(h_\e|\rho)
  =\, \H_\b(\hine|\rho_{\rm in})
   +\int_{\T^d\times\R^d}\left[\left(\varphi_\b(h_\e)-\varphi_\b(\hine)\right)
   -\left(\varphi_\b(\r)-\varphi_\b(\r_{\rm in})\right)\right]\dif m&\\
-\int_{\T^d\times\R^d}\left[\left(\varphi'_\b(\r)-\varphi'_\b(\r_{\rm in})\right)(\hine-\r_{\rm in}) 
 +\varphi'_\b(\r)(h_\e-\hine) -\varphi'_\b(\r)(\r-\r_{\rm in})
 \right]\dif m&.
\end{split}
\end{equation}
Consider a truncation in $v$ for the integrals on the right hand side. 
Since $h_\e,h_{\e,\rm in},\r,\r_{\rm in}$ are all bounded from above (Lemma~\ref{Gaussian}), we have 
\begin{equation}\label{limitbarrierc}
\int_{\T^d\times B^c_R}\left[|\varphi_\b(h_\e)-\varphi_\b(\hine)|
+|\varphi_\b(\r)-\varphi_\b(\r_{\rm in})|\right]\dif m
\lesssim \int_{B^c_{R}}\dif\m\lesssim R^{-4}. 
\end{equation}
Observe that for any $a,b\in(0,\Lambda]$, there exists $\xi\in\R_+$ lying between $a$ and $b$ such that 
$\varphi_\b(a^2)-\varphi_\b(b^2)=2\xi\varphi'_\b(\xi^2)(a-b)$. 
Meanwhile, for any $\xi\in(0,\Lambda]$, $|\xi\varphi'_\b(\xi^2)|\lesssim1$. 
Thus, 
\begin{equation*}
|\varphi_\b(h_\e)-\varphi_\b(\hine)|\lesssim |h_\e-\hine|^\frac{1}{2}
{\quad\rm and\quad}
|\varphi_\b(\r)-\varphi_\b(\r_{\rm in})|\lesssim |\r-\r_{\rm in}|^\frac{1}{2}. 
\end{equation*}
Set $R:=\e^{-\frac{\eta}{4}}$, $\d:=\e^{\frac{\eta}{4}}$ and $\underline{T}:=\frac{1}{8}\e^{2+2\eta}$ for some constant $\eta\in(0,1)$ to be determined. 
In this setting, $\underline{T}\le\frac{\e\d}{4\la R\ra}$. 
It then follows from \eqref{limitbarrier}, \eqref{limitholder} and \eqref{limitbarrierc} that, for any $t\le\underline{T}$, 
\begin{align}\label{limitinitial1}
\int_{\T^d\times \R^d}&\left[\left|\varphi_\b(h_\e)-\varphi_\b(\hine)\right|
+\left|\varphi_\b(\r)-\varphi_\b(\r_{\rm in})\right|\right]\dif m\nonumber\\
&\ \lesssim R^{-4} +\int_{\T^d\times B_R}\left[|h_\e-\hine|^\frac{1}{2} +|\r-\r_{\rm in}|^\frac{1}{2}\right]\dif m\nonumber\\
&\ \lesssim \e^{\eta} + \e^{\frac{\eta}{2}}+\|h_{\e,\rm in}\|^\frac{1}{2}_{{\mathcal C}^{\a_0}(\T^d\times\R^d)}\e^{\frac{\a\eta}{8}}
+\Big(1+\|\r_{\rm in}\|^\frac{1}{2}_{{\mathcal C}^{\a_0}(\T^d)}\Big)\e^{\a+\a\eta}.  
\end{align}
Besides, $\|h_\e-\hine\|_{L^1(\T^d\times B^c_R,\,\dif m)}\lesssim R^{-4}$ and $|\varphi'_\b|\lesssim1$ on $[\lambda,\Lambda]$. 
Therefore, combining \eqref{limitbarrier}, \eqref{limitholder} with the inequality~\eqref{entropydist1} given in Lemma~\ref{entropydist} yields that, for any $t\le\underline{T}$, 
\begin{align}\label{limitinitial2}
\int_{\T^d\times\R^d}&\left[\left|\left(\varphi'_\b(\r)-\varphi'_\b(\r_{\rm in})\right)(\hine-\r_{\rm in})\right| 
 +\left|\varphi'_\b(\r)(h_\e-\hine)\right| +\left|\varphi'_\b(\r)(\r-\r_{\rm in})\right|
 \right]\dif m\nonumber\\
&\ \lesssim  \|\hine-\r_{\rm in}\|_{L^2\left(\T^d\times\R^d,\,\dif m\right)} 
+\|h_\e-\hine\|_{L^1\left(\T^d\times\R^d,\,\dif m\right)} +\|\r-\r_{\rm in}\|_{L^1(\T^d)}\nonumber\\
&\ \lesssim  \H_\b^\frac{1}{2}(\hine|\r_{\rm in})+ R^{-4} +\|h_\e-\hine\|_{L^1\left(\T^d\times B_R,\,\dif m\right)} +\|\r-\r_{\rm in}\|_{L^1(\T^d)}\nonumber\\
&\ \lesssim {\e'}^\frac{1}{2}
+\e^{\eta}+\|h_{\e,\rm in}\|_{{\mathcal C}^{\a_0}(\T^d\times\R^d)}\e^{\frac{\a\eta}{4}}
+\left(1+\|\r_{\rm in}\|_{{\mathcal C}^{\a_0}(\T^d)}\right)\e^{2\a+2\a\eta}. 
\end{align}
Plugging \eqref{limitinitial1} and \eqref{limitinitial2} into the expression \eqref{limitinitial0}, we derive that, for any $t\le\underline{T}$, 
\begin{equation}\label{limitinitial}
\H_\b(h_\e|\rho)(t)
\lesssim  
\left(1+\|h_{\e,\rm in}\|_{{\mathcal C}^{\a_0}(\T^d\times\R^d)}+\|\r_{\rm in}\|_{{\mathcal C}^{\a_0}(\T^d)}\right) (\e+\e')^{\frac{\a\eta}{8}}  .  
\end{equation}

\medskip\noindent{\textbf{Step 3.}}	Conclusion. \\
Recall that we have chosen $\underline{T}=\frac{1}{8}\e^{2+2\eta}$. 
In view of \eqref{limitinitial} and the estimate~\eqref{finitelimit} given in Lemma~\ref{finitetime}, one may optimize in $\eta$ to get the result. 
For simplicity, we pick $\eta:=\frac{\a}{4-2\a}$ so that $\underline{T}=\frac{1}{8}\e^{\frac{4-\a}{2-\a}}$ 
and $\e\Big(\underline{T}^{\frac{\a-1}{2}}+\e\underline{T}^{\frac{\a-2}{2}}\Big)\lesssim\e^\frac{\a}{2}$. 
It turns out that for any $t\in[0,1]$, 
\begin{align}\label{conclu}
\H_\b(h_\e|\rho)(t)
\le C_\r\H_\b(h_\e|\rho)(\underline{T})
+C_\r\e\left(\underline{T}^{\frac{\a-1}{2}}+\e\underline{T}^{\frac{\a-2}{2}}\right)
\le C_* (\e+\e')^{\frac{\a\eta}{8}},  
\end{align}
where the constant $C_\r>0$ is provided in Lemma~\ref{finitetime} and the constant $C_*>0$ depends only on universal constants, $\left\|\rho_{\rm in}\right\|_{{\mathcal C}^{\a_0}(\T^d)}$ and $\left\|\hine\right\|_{{\mathcal C}^{\a_0}(\T^d\times\R^d)}$.
Then, using the estimate \eqref{finitelimit'} given in Lemma~\ref{finitetime} with \eqref{entropydist1}, we arrive at point (i) of Theorem~\ref{limit}.

As for point (ii) of Theorem~\ref{limit}, applying \eqref{finitelimit'}, together with \eqref{entropydist1} and \eqref{conclu}, for any $t\in[1,\overline{T}]$, we have 
\begin{align*}
\|h_\e(t)-\rho(t)\|_{L^2(\dif m)}^2
\lesssim& \left[\H_\b(h_\e|\rho)(1)+ \e\Big(1+\overline{T}^\frac{1}{2}\Big)\right] e^{C\overline{T}}\\
\lesssim& \left[C_*(\e+\e')^{\frac{\a\eta}{8}} +\e\Big(1+(-\iota\log(\e+\e'))^\frac{1}{2}\Big)\right]
(\e+\e')^{-\frac{\a\eta}{16}}
\lesssim C_*(\e+\e')^{\frac{\a\eta}{16}}. 
\end{align*}
where we picked $\overline{T}:=-\iota\log(\e+\e')$ with $\iota:=\frac{\a\eta}{16C}$. 
Finally, using Proposition~\ref{longtime} under the additional assumption that $\hine\ge\lambda$, we know the long time behavior that there is some universal constant $c>0$ such that for any $t\ge\overline{T}$, 
\begin{equation*}
\|h_\e(t)-\rho(t)\|_{L^2(\dif m)}
\le \|h_\e(t)-M_0\|_{L^2(\dif m)}+\|\r(t)-M_0\|_{L_x^2}
\lesssim e^{-c\overline{T}}= (\e+\e')^{c\iota}.  
\end{equation*}
The proof now is complete.  
\end{proof}

\appendix
\section{Maximum principle}
The following maximum principle (on a not necessarily bounded domain) is repeatedly
applied throughout the article. We state it in the more suitable fashion for the Fokker-Planck equations of our concern, whose proof is in the same spirit as \cite[Lemma A.2]{CSS}. 
\begin{lemma}\label{max} 
	Let the domain $\o\subset\R^d\times\R^d$ and the parabolic cylinder $\o_T:=(0,T]\times\o$. 
	If $f\in\Ckin^2(\o_T)\cap{\mathcal C}^0(\overline{\o_T})$ is a bounded subsolution in the sense that 
\begin{equation}
\L_1f:=\left(\partial_t+v\cdot\nabla_{x}\right)f-\trace(AD_v^{2}f)-B\cdot\nabla_{v}f\le 0
{\quad\rm in\ }\o_T, 
\end{equation}
with the coefficients $A(t,x,v),B(t,x,v)\in{\mathcal C}^0({\o_T})$ satisfying 
\begin{equation*}
\lambda|\xi|^2\le A(t,x,v)\,\xi\cdot\xi\le\Lambda|\xi|^2,\quad
|B(t,x,v)\cdot\xi|\le\Lambda\la v\ra|\xi|
{\quad\rm\ for\ any\ }\xi\in\R^d,\ (t,x,v)\in\o_T, 
\end{equation*} 
then $\sup\nolimits_{\o_T} f\le\sup\nolimits_{\p_p\o_T}f$, 
where the parabolic boundary $\p_p\o_T:=[0,T]\times\overline{\o}-(0,T]\times\o$. 
\end{lemma}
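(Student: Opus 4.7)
The approach is the classical barrier-function trick, adapted to the kinetic setting to handle the fact that $\o$ may be unbounded in both $x$ and $v$ and that $B$ may be unbounded in $v$. Since $f$ is only bounded and $\overline{\o_T}$ is not compact, one cannot extract a maximizer directly; I would therefore subtract off a coercive strict supersolution and argue by contradiction at the (now guaranteed) maximizer of the perturbed function.

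First, I would construct an auxiliary function adapted to the Galilean-invariant structure. Set
\[
\phi(t,x,v) := e^{Kt}\psi(t,x,v),\qquad \psi(t,x,v) := 1 + |v|^2 + |x - tv|^2,
\]
where $K>0$ is to be chosen. The choice of $\psi$ is crucial: a naive $|x|^2+|v|^2$ would produce an uncontrolled $v\cdot\nabla_x|x|^2 \sim |v||x|$ term, whereas the Galilean-adapted quantity $|x-tv|^2$ satisfies $(\p_t+v\cdot\nabla_x)|x-tv|^2=0$. A direct computation then gives $D_v^2\psi=2(1+t^2)I$ and $\nabla_v\psi=2v-2t(x-tv)$, so that, using $\trace(A)\le d\Lambda$, $|B|\le\Lambda\la v\ra$, and the elementary bounds $\la v\ra|v|\le 1+|v|^2$ and $\la v\ra|x-tv|\le\tfrac12(1+|v|^2+|x-tv|^2)$, one obtains
\[
|\L_1\psi| \le C_T\,\psi \quad\text{in }\o_T
\]
for some $C_T>0$ depending only on $d,\Lambda,T$. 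Hence $\L_1\phi=e^{Kt}(K\psi+\L_1\psi)\ge e^{Kt}(K-C_T)\psi\ge 0$ as soon as $K\ge C_T$; since $\psi\ge 1$, the inequality is in fact strict.

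Next, fix $\e>0$ and consider $f_\e:=f-\e\phi$. Since $f$ is bounded on $\o_T$ while $\phi(t,x,v)\to+\infty$ as $|v|^2+|x-tv|^2\to\infty$ uniformly in $t\in[0,T]$, the function $f_\e$, continuous on $\overline{\o_T}$, attains its supremum at some point $z_*=(t_*,x_*,v_*)\in\overline{\o_T}$. The plan is to show $z_*\in\p_p\o_T$. Suppose for contradiction that $t_*\in(0,T]$ and $(x_*,v_*)\in\o$. Then standard optimality conditions give $\p_tf_\e(z_*)\ge 0$ (equality if $t_*<T$), $\nabla_xf_\e(z_*)=0$, $\nabla_vf_\e(z_*)=0$, and $D_v^2f_\e(z_*)\le 0$, which together yield $\L_1f_\e(z_*)\ge 0$. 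On the other hand, $\L_1f_\e=\L_1f-\e\L_1\phi\le-\e\L_1\phi(z_*)<0$, a contradiction. Thus $z_*\in\p_p\o_T$.

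Finally, for any fixed $z\in\o_T$,
\[
f(z) = f_\e(z)+\e\phi(z) \le \sup_{\p_p\o_T}f_\e + \e\phi(z) \le \sup_{\p_p\o_T}f + \e\phi(z),
\]
using $\phi\ge 0$ in the last step. Letting $\e\to 0^+$ yields $f(z)\le\sup_{\p_p\o_T}f$, which is the desired conclusion. The only real difficulty lies in the first step, namely engineering a barrier compatible with the unboundedness of $B$ in $v$; once the Galilean-adapted $|x-tv|^2$ is in place, the supersolution property follows from a short algebraic computation, and the rest is routine.
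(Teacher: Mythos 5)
Your proof is correct and follows a genuinely different route from the paper's. The paper introduces two separate barriers, $\phi_1(t,v)=e^{C_1t}\la v\ra^2$ and $\phi_2(t,x)=e^{C_2t}\la x\ra^2$; $\phi_1$ is a supersolution everywhere, but $\phi_2$ is a supersolution only where $|v|$ stays below some $R_1$ (since $v\cdot\nabla_x\la x\ra^2=2v\cdot x$ is uncontrolled for large $|v|$), so the paper truncates in both $v$ and $x$, lets $C_2$ depend on the $v$-truncation, invokes the bounded-domain maximum principle as a black box, and passes through four nested limits. Your single Galilean-adapted barrier $\psi=1+|v|^2+|x-tv|^2$, for which $(\partial_t+v\cdot\nabla_x)\psi\equiv0$, kills the cross term at the source, and the whole argument collapses to one perturbation $f_\e=f-\e\phi$ and one contradiction at the resulting maximizer — shorter, more transparent, and self-contained. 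Two small points to tidy: (a) strictness of $\L_1\phi>0$ requires $K>C_T$, not $K\ge C_T$; the clause ``since $\psi\ge1$, the inequality is in fact strict'' does not rescue the equality case $K=C_T$. (b) Since $f\in\Ckin^2(\o_T)$ only gives $D_v^2f$ and the combination $(\partial_t+v\cdot\nabla_x)f$, not $\partial_tf$ and $\nabla_xf$ separately, the first-order condition should be read along the kinetic characteristic: the map $s\mapsto f_\e(z_*\circ(s,0,0))$ has a local maximum at $s=0$, giving $(\partial_t+v_*\cdot\nabla_x)f_\e(z_*)\ge0$ directly; combined with $\nabla_vf_\e(z_*)=0$ and $D_v^2f_\e(z_*)\le0$, this still yields $\L_1f_\e(z_*)\ge0$. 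Neither point affects the validity of the argument.
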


\begin{proof}
If the domain $\o$ is bounded, then the result is classical. 
For general (unbounded) $\o$, we consider the auxiliary functions 
$\phi_1(t,v):=e^{C_1t}\la v\ra^2$ and $\phi_2(t,x):=e^{C_2t}\la x\ra^2$ with $C_1,C_2>0$. 
Since $f$ is bounded, for any $\varepsilon_1,\varepsilon_2>0$, there exists $R(\varepsilon_1),R(\varepsilon_2)>0$ (independent of $C_1,C_2$) such that $f-\varepsilon_1\phi_1-\varepsilon_2\phi_2\le\sup\nolimits_{\p_p\o_T}f$ in $\o_T\cap\{|x|\ge R(\varepsilon_2){\ \rm or\ }|v|\ge R(\varepsilon_1)\}$. 
	
By choosing $C_1=(d+2)\Lambda$, we have 
\begin{equation*}
\L_1\phi_1=e^{C_1t}(C_1\la v\ra^2-\trace(A)-2B\cdot v)
\ge (C_1-(d+2)\Lambda)\la v\ra^2= 0
{\quad\rm in\ }\o_T. 
\end{equation*}
For any $R_1\ge R(\varepsilon_1)$, there exists $C_2>0$ depending only on $R_1$ such that 
\begin{equation*}
\L_1\phi_2=e^{C_2t}\left(C_2\la x\ra^2 +v\cdot x\right)\ge (C_2-1)\la x\ra^2-|v|^2\ge 0
{\quad\rm in\ }\o_T\cap\{|v|<R_1\}. 
\end{equation*}
Therefore, for any $R_2>R(\varepsilon_2)$, $f-\varepsilon_1\phi_1-\varepsilon_2\phi_2$ is a subsolution to \eqref{linearequation} in the bounded domain $(0,T]\times(\w\cap(B_{R_2}\times B_{R_1}))$ with the data smaller than $\sup\nolimits_{\p_p\o_T}f$ on the boundary portion contained in $\{|x|=R_2{\ \rm or\ }|v|=R_1\}$. 
Then, applying the classical maximum principle yields 
\begin{equation*}
f-\varepsilon_1\phi_1-\varepsilon_2\phi_2 \le \sup\nolimits_{\p_p\o_{T}} f
{\quad\rm in\ }(0,T]\times(\w\cap(B_{R_2}\times B_{R_1})). 
\end{equation*}
Sending $R_2\rightarrow0$, $\varepsilon_2\rightarrow0$, $R_1\rightarrow0$, $\varepsilon_1\rightarrow0$ in order, we get the conclusion. 
\end{proof}

\section{Spreading of positivity}\label{lowerapp}
This appendix is devoted to the proof of Proposition~\ref{lower}. 
The argument follows the one presented in \cite{HST3} and it is based on the combination of Lemma~\ref{lowerlemma1} and Lemma~\ref{lowerlemma2}. 
\begin{proof}[Proof of Proposition~\ref{lower}]
	The proof is split into four steps. 
	
	\medskip\noindent{\textbf{Step 1.}}	 Spreading positivity for all velocities for short times. \\ 
	Applying Lemma~\ref{lowerlemma1} ($\tau=1$) yields that there is some universal constant $c_0>0$ such that, for any 
	$0\le t\le \min\big\{1,\,T,\,c_0\left\la r^{-1}\right\ra^{-2}\!\la v_0\ra^{-2}\big\}$, 
	\begin{equation*}
	h(t,x,v) \ge \frac{\delta}{8}\mathbbm{1}_{\left\{|x-x_0-tv|<\frac{r}{2},\,|v-v_0|<\frac{r}{2}\right\}}
	\ge\frac{\delta}{8}\mathbbm{1}_{\left\{|x-x_0-tv_0|<\frac{r}{4},\,|v-v_0|<\frac{r}{4}\right\}}. 
	\end{equation*}
	Let $r_0:=\min\big\{1,\frac{r}{16}\big\}$ and $\underline{t}:=\frac{\underline{T}}{2}$. Then, Lemma~\ref{lowerlemma2} implies that there exists $\underline{C}_{0}>0$ depending only on universal constants, $\underline{T}$, $\delta$, $r$ and $v_0$ such that, for any $0<\underline{t}\le t\le T_0$ with 
	$T_0:= \min\big\{1,\,T,\,c_0\left\la r^{-1}\right\ra^{-2}\!\la v_0\ra^{-2},\,\frac{r_0}{4}\la v_0\ra^{-1}\big\}$ and $v\in\R^d$, we have 
	\begin{equation}\label{lowers1}
	h(t,x,v)\ge \underline{C}_{0}^{-1} e^{-\underline{C}_{0}|v-v_0|^4}\mathbbm{1}_{\left\{|x-x_0-tv_0|<2r_0\right\}}
	\ge \underline{C}_{0}^{-1} e^{-\underline{C}_{0}|v-v_0|^4}\mathbbm{1}_{\left\{|x-x_0|<r_0\right\}}. 
	\end{equation}
	
	\medskip\noindent{\textbf{Step 2.}}	 Spreading positivity in space for short times. \\ 
	For any fixed $\overline{t}\in[\underline{t},T_0]$ and $\overline{x}\in\T^d$, we set $\overline{v}:=\frac{\overline{x}-x_0}{\overline{t}-\underline{t}}$. 
	In view of \eqref{lowers1}, by Lemma~\ref{lowerlemma1} ($\tau=2(\overline{t}-\underline{t})$, $v_0=\overline{v}$), we deduce that, 
	if $\overline{t}-\underline{t}\le c_0
	\left\la 2(\overline{t}-\underline{t})r_0^{-1}\right\ra^{-2}\la\overline{v}\ra^{-2}$, 
	in particular if  
	\begin{equation}\label{lowers2t}
	\overline{t}\le\underline{t}+\overline{t}_0 {\ \ \rm with\ } \overline{t}_0:=\frac{c_0r_0^2}{4+r_0^2}\la\overline{x}-x_0\ra^{-2},
	\end{equation}
	then there exists $\delta_0>0$ with the same dependence as $\underline{C}_{0}$ such that, for any $t\in[\underline{t},\overline{t}]$, 
	\begin{equation*}
	h(t,x,v)
	\ge \delta_0\mathbbm{1}_{\left\{|x-x_0-({t}-\underline{t})v|<\frac{r_0}{2},\;
		2(\overline{t}-\underline{t})|v-\overline{v}|<\frac{r_0}{2}\right\}}
	\ge \delta_0\mathbbm{1}_{\left\{|x-x_0-({t}-\underline{t})\overline{v}|<\frac{r_0}{4},\;
		|v-\overline{v}|<\frac{r_0}{4}\right\}}. 
	\end{equation*}
	Then, Lemma~\ref{lowerlemma2} ($v_0=\overline{v}$) implies that, 
	for any $0<2\underline{t}\le t\le\underline{t}+\overline{t}_0$ and $v\in\R^d$,
	\begin{equation}\label{lowers21}
	h(t,x,v)
	\ge \underline{C}_{1}^{-1}e^{-\underline{C}_{1}|v|^4} \mathbbm{1}_{\left\{|x-x_0-({t}-\underline{t})\overline{v}|<\frac{r_0}{8}\right\}},
	\end{equation}
	for some constant $\underline{C}_{1}>0$ depending only on universal constants, $\underline{T},\delta,r,v_0$ and $|\overline{x}-x_0|$. 
	In particular, for any $0<2\underline{t}\le \overline{t}\le\underline{t}+\overline{t}_0$ and $v\in\R^d$,
	\begin{equation}\label{lowers22}
	h(\overline{t},\overline{x},v)\ge \underline{C}_{1}^{-1}e^{-\underline{C}_{1}|v|^4}. 
	\end{equation}
	
	\medskip\noindent{\textbf{Step 3.}}	 Spreading positivity for any finite time. \\
	We observe that the time interval above is restricted (see \eqref{lowers2t}), but it can be removed by applying the lemmas again. 
	Based on the previous step, it suffices to deal with the case that $\overline{t}>\overline{t}_0$. 
	By a similar proof to \eqref{lowers21}, we derive 
	\begin{equation*}
	h(\overline{t}_0,x,v)\ge \delta_1\mathbbm{1}_{\left\{|x-\overline{x}|<\frac{r_0}{8},\,
		|v|<\frac{r_0}{8}\right\}}, 
	\end{equation*}
	for some constant $\delta_1>0$ with the same dependence as $\underline{C}_{1}$. 
	In view of this data, applying Lemma~\ref{lowerlemma1} to $h(\overline{t}_0+\cdot,\cdot,\cdot)$ (with $\tau=1$, $v_0=0$), we see that, for any $t\in\big[\overline{t}_0,\,\min\big\{T_0,\overline{t}_0+T_1\big\}\big]$ with $T_1:=c_0\left\la\frac{8}{r_0}\right\ra^{-2}$,
	\begin{equation*}
	h(t,x,v)\ge \frac{\delta_1}{8}\mathbbm{1}_{\left\{|x-\overline{x}|<\frac{r_0}{16},\, 
		|v|<\frac{r_0}{16}\right\}}. 
	\end{equation*}
	It then follows from Lemma~\ref{lowerlemma2} that, for any $t\in\big[\overline{t}_0+\underline{t},\,\min\big\{T_0,\overline{t}_0+T_1\big\}\big]$ and $v\in\R^d$,  
	\begin{equation*}
	h(t,x,v)\ge \underline{C}_{2}^{-1} e^{-\underline{C}_{2}|v|^4}\mathbbm{1}_{\left\{|x-\overline{x}|<\frac{r_0}{32}\right\}}, 
	\end{equation*}
	for some constant $\underline{C}_{2}>0$ with the same dependence as $\underline{C}_{1}$.  
	
	Combining this with \eqref{lowers22}, as well as recalling that $\underline{T}=2\underline{t}$ and the space domain $\T^d$ is compact, we know that there exists $\underline{C}_{3}>0$ depending only on universal constants, $\underline{T},\delta,r$ and $v_0$ such that, for any $(t,x,v)\in\left[\underline{T},\,\min\left\{T_0,T_1\right\}\right]\times\T^d\times\R^d$, 
	\begin{equation*}
	h(t,x,v)\ge\underline{C}_{3}^{-1} e^{-\underline{C}_{3}|v|^4}.
	\end{equation*}
	Since $T_0$ and $T_1$ depend only on universal constants, $r$ and $v_0$, by applying the above arguments iterately, we obtain the result for any finite time. 
	
\medskip\noindent{\textbf{Step 4.}}	 Improving the exponential tail. \\
We remark that this step is not necessary for the applications of the lower bound result, but it shows a more precise decay rate as $|v|\rightarrow\infty$. 
	
By the previous step, there is some $\underline{c}>0$ depending only on universal constants, $\underline{T},T,\delta,r$ and $v_0$ such that $h\ge \underline{c}$ in $[\underline{T},T]\times\T^d\times B_1$. 
Consider the barrier function  
$$\underline{h}(t,x,v):=\underline{c} e^{-C_0(t-\underline{T})^{-1}|v|^2}{\quad\rm in\ }[\underline{T},T]\times\T^d\times B_1^c,$$
where the constant $C_0>1$ is to be determined. 
By recalling the equation~\eqref{rnk} and a direct computation, we have 
	\begin{align*}
	\left(\partial_t+v\cdot\nabla_x\right)\underline{h} -\rfp_h\,\lfp\underline{h} 
	&= \frac{C_0\rfp_h\underline{h}}{(t-\underline{T})^2}
	\left(\rfp_h^{-1}+2(d-|v|^2)(t-\underline{T})-4C_0|v|^2\right) \\
	&\le \frac{C_0\rfp_h\underline{h}}{(t-\underline{T})^2} 
	\left(\underline{c}^{-\b}+2dT-4C_0\right)
	{\quad \rm in\ }
	(\underline{T},T]\times\T^d\times B_1^c.
	\end{align*}
	In particular, by choosing $C_0$ sufficiently large (with the same dependence as $\underline{c}$), we have
	\begin{align*}
	\left(\partial_t+v\cdot\nabla_x\right)\left(\underline{h}-h\right) -\rfp_h\,\lfp\left(\underline{h}-h\right)
	\le 0
	{\quad \rm in\ }
	(\underline{T},T]\times\T^d\times B_1^c. 
	\end{align*} 
	Besides, by its definition, $h\ge\underline{h}$ on the boundary $\{t\in[\underline{T},T],\,|v|=1\}\cup \{t=2\underline{t},\,|v|\ge1\}$. 
	The maximum principle (Lemma~\ref{max}) then implies that $h\ge\underline{h}$ in $[\underline{T},T]\times\T^d\times B_1^c$. 
	Therefore, we achieve the Gaussian type lower bound for any $(t,x,v)\in[2\underline{T},T]\times\T^d\times\R^d$. 
	The proof now is complete. 
\end{proof}

\section{Gaining regularity of spatial increment}
This appendix is devoted to the proof of two technical lemmas for spatial increments involved in the bootstrapping of higher regularity for solutions to the equation~\eqref{gnk} presented in Subsection~\ref{smoothsection}. 
For the convenience of the reader, we report a brief proof following the lines of 
\cite[Lemma 8.1]{IS} with $s=1$, $\a_1=\b=2$. 

\begin{lemma}\label{mio-lemma}
Let $\a\in(0,1)$ and a bounded continuous function $g$ defined in $Q_4$. 
If there exists some constant $M>0$ such that for any $y \in B_1$, 
\begin{equation*}\label{assumptions1}
	[\delta_{y}g]_{\Ck^{0}(Q_2)} \le M{\quad\rm and\quad} [ \delta_{y}g ]_{\Ck^{2+\a}(Q_2)} \le M \|(0,y,0)\|^{2}, 
\end{equation*}
then there exists some universal constant $\eta\in(0,1)$ such that for any $y \in B_1$, 
\begin{equation*}
	\| \delta_{y}g \|_{\Ck^{\eta}(Q_1)} \lesssim M \| (0,y,0) \|^{3}.
\end{equation*}
\end{lemma}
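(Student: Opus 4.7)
The plan is to prove the estimate by combining a local Taylor expansion of $\delta_y g$ with the two hypotheses through a careful rescaling in the kinetic geometry, in the spirit of~\cite[Lemma~8.1]{IS} with the specialization $s=1$, $\a_1=\b=2$.

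First, I would unpack the $\Ck^{2+\a}$-seminorm hypothesis: for each $z_0\in Q_2$ there is a polynomial $p_{z_0}$ of kinetic degree strictly less than $2+\a$ such that
\[
|\delta_y g(z_0\circ z) - p_{z_0}(z)| \le M\,\|(0,y,0)\|^{2}\,\|z\|^{2+\a}.
\]
Since $\a<1$, only the monomials $1$, $t$, $v_i$, and $v_iv_j$ can appear in $p_{z_0}$ (the $x$-terms have kinetic degree $3$), so $p_{z_0}(z)=a_0+b_0 t+c_0\cdot v+\tfrac12 v^T D_0 v$, with $a_0=\delta_y g(z_0)$, $b_0=(\p_t+v_0\cdot\nabla_x)\delta_y g(z_0)$, $c_0=\nabla_v\delta_y g(z_0)$, $D_0=D_v^2\delta_y g(z_0)$. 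The $\Ck^0$ hypothesis then furnishes the complementary pointwise control $|p_{z_0}(z)|\le M+M\|(0,y,0)\|^{2}\|z\|^{2+\a}$ on any kinetic ball contained in $Q_2$, from which standard polynomial inequalities in the kinetic framework give bounds on the individual coefficients $b_0, c_0, D_0$ at any prescribed scale.

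Next, for $z_1,z_2\in Q_1$, setting $w:=z_1^{-1}\circ z_2$ and $\rho:=\|w\|$, I would decompose
\[
\delta_y g(z_2) - \delta_y g(z_1) = \bigl(p_{z_1}(w) - p_{z_1}(0)\bigr) + O\!\bigl(M\,\|(0,y,0)\|^{2}\,\rho^{2+\a}\bigr),
\]
and split the analysis according to the size of $\rho$ relative to $\|(0,y,0)\|$. At small scales, the Taylor remainder dominates and provides a gain proportional to $\|(0,y,0)\|^{2}$, while at larger scales one falls back on the trivial $\Ck^{0}$ bound $|\delta_y g(z_1)-\delta_y g(z_2)|\le 2M$. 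Picking a sufficiently small universal H\"older exponent $\eta\in(0,1)$ to interpolate between these two regimes at the appropriately chosen crossover scale then yields the desired estimate $\|\delta_y g\|_{\Ck^{\eta}(Q_1)} \lesssim M\,\|(0,y,0)\|^{3}$.

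The main obstacle is the precise tracking of the coefficient estimates at the crossover scale: the pointwise $\Ck^{0}$ bound alone gives only $M$-sized coefficient control, which is too crude to produce the exponent~$3$ on $\|(0,y,0)\|$ directly. The genuine improvement is extracted by combining the smallness of the Taylor remainder with the scale dependence of the polynomial control, at the price of lowering the H\"older exponent on $\delta_y g$ to a small universal value $\eta$; this rescaling trade-off is precisely the one carried out in~\cite[Lemma~8.1]{IS}, whose argument transfers to the present kinetic H\"older setting.
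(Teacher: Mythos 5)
Your plan correctly unpacks the polynomial expansion of $\delta_y g$ (with $p_{z_0}$ containing only the monomials $1,t,v_i,v_iv_j$ since $2+\a<3$) and correctly names the obstruction: the two hypotheses give only $M$-sized control on the linear and quadratic coefficients of $p_{z_0}$, so the Taylor--plus--crossover interpolation you describe cannot by itself produce the exponent $3$ on $\|(0,y,0)\|$. In fact it produces no gain at all: with coefficients bounded merely by $M$, the term $p_{z_1}(w)-p_{z_1}(0)$ is $\lesssim M\rho$ for all $\rho\le1$, so the resulting H\"older seminorm is $\lesssim M$, not $M\|(0,y,0)\|^{3}$. Having named the obstacle, the proposal then defers the resolution to \cite[Lemma 8.1]{IS} without extracting the argument --- but that argument is exactly where the proof lives, and the statement cannot be closed without it.

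The missing ingredient is a dyadic doubling identity specific to spatial increments. Because $\a<1$ forces $2+\a<3$ while every pure $x$-monomial has kinetic degree at least $3$, the expansion $p_{z_0}$ has no $x$-dependence, so $p_{z_0}(0,y,0)=p_{z_0}(0,0,0)=\delta_y g(z_0)$. Combined with the group identity
\begin{equation*}
\delta_{2y}g(z_0)-2\,\delta_y g(z_0)=\delta_y g\bigl(z_0\circ(0,y,0)\bigr)-\delta_y g(z_0)=\delta_y g\bigl(z_0\circ(0,y,0)\bigr)-p_{z_0}(0,y,0),
\end{equation*}
the $\Ck^{2+\a}$ hypothesis evaluated at $z=(0,y,0)$ yields the superlinear gain $|\delta_{2y}g(z_0)-2\,\delta_y g(z_0)|\le M\|(0,y,0)\|^{4+\a}$. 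Telescoping this over the dyadic scales $y,2y,\dots,2^k y$ up to the unit scale and absorbing the tail $2^{-k}|\delta_{2^k y}g|$ with the raw $\Ck^0$ hypothesis produces the upgraded zeroth-order bound $|\delta_y g|\lesssim M\|(0,y,0)\|^3$ on $Q_2$. Only with that upgraded bound in hand does the small-scale Taylor step you sketch close (yielding $\eta=1/2$), and the same telescoping, applied at both $z_0$ and $z_0\circ z$, supplies the complementary regime $\|z\|\gtrsim\|(0,y,0)\|$. Without the doubling step your scheme stalls at exponent $0$.
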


\begin{proof}[Proof of Lemma~\ref{mio-lemma}]
In view of the assumption and Remark~\ref{holderremark}, for fixed $y\in B_1$, we consider the the polynomial expansion $p_0$ of $\d_yg$ at $z_0\in Q_2$ with $\dk(p_0)=2$, 
\begin{equation*}
p_0(z)= \d_yg(z_0) +(\p_t+v_0\cdot\nabla_x)\d_yg(z_0)\,t +\nabla_v\d_yg(z_0)\cdot v + \frac{1}{2} D^2_v\d_yg(z_0)\,v\cdot v, 
\end{equation*} 
for $z:=(t,x,v)\in\R\times\R^d\times\R^d$. 
For any $z$ such that $z_0\circ z\in Q_4$, we have 
\begin{equation}\label{assumptions2}
|\d_yg(z_0\circ z)- p_0(z)|\le M \|(0,y,0)\|^2\|z\|^{2+\a}. 
\end{equation}
In particular, $p_0(0,y,0)=\d_yg(z_0)$ so that for any $y\in B_1$,
\begin{align*}
| \d_{2y}g(z_0) - 2\d_yg(z_0)| 
&= |\d_yg(z_0\circ (0,y,0)) - \d_yg(z_0)| 
= |\d_yg(z_0\circ (0,y,0)) - p_0(0,y,0)| \nonumber\\ 
&\le M \| (0,y,0) \|^{4+\a}
\end{align*}
It then follows that for any $z_0\in Q_2$ and for any $k \in \N$ such that $z_0\circ(0,2^ky,0)\in Q_4$, 
\begin{align}\label{gaining1}
\big|\d_yg(z_0) - 2^{-k}\d_{2^{k}y}g(z_0)\big| 
&\le \sum_{j=1}^{k}2^{-j}\big|\d_{2^{j}y}g(z_0)-2\d_{2^{j-1}y}g(z_0)\big|\nonumber\\
&\le M\|(0,y,0)\|^{4+\a} \sum_{j=1}^{k}2^{\frac{(1+\a)j}{3}} 
\le 2M \|(0,y,0)\|^{4+\a} 2^{\frac{(1+\a)k}{3}}.
\end{align}
Picking $k\in\N$ such that $\|2^{k-1}(0,y,0)\|\le1<\|2^{k}(0,y,0)\|$ and using the assumption yields
\begin{align}\label{gaining2}
|\d_yg(z_0)| &\le 2^{-k}|\d_{2^ky}g(z_0)|+2M\|(0,y,0)\|^{4+\a} 
2^{\frac{(1+\a)k}{3}} \nonumber\\
&\le \|\d_{2^ky}g\|_{\Ck^0(Q_2)}\|(0,y,0)\|^{3}+4M\|(0,y,0)\|^{3}
\le 5M\| (0,y,0) \|^{3}.
\end{align}

It remains to show that there exists some constant $\eta>0$ depending only on $\a$ such that 
\begin{equation}\label{toprove}
|\d_yg(z_0\circ z)-\d_yg(z_0)|\lesssim M\| (0,y,0) \|^{3} \|z\|^{\eta}. 
\end{equation}
By \eqref{assumptions2} and Lemma~\ref{kinetic-degree}, we know that for any $z_0\in Q_1$ and $z_0\circ z\in Q_4$, 
\begin{align*}
|\d_yg(z_0\circ z)-\d_yg(z_0)|
\le& \left(|(\p_t+v_0\cdot\nabla_x)\d_yg(z_0)|
+|D^2_v\d_yg(z_0)|\right)\|z\|^2\\
 &+|\nabla_v\d_yg(z_0)|\|z\| +M\|(0,y,0)\|^2\|z\|^{2+\a}\\
\lesssim& \Big([\d_yg]_{\Ck^{2+\a}(Q_2)}\|z\| +[\d_yg]_{\Ck^{2+\a}(Q_2)}^\frac{1}{2}[\delta_{y}g]_{\Ck^0(Q_2)}^\frac{1}{2}+[\delta_{y}g]_{\Ck^0(Q_2)}\Big)\|z\|\\
 &+M\|(0,y,0)\|^2\|z\|^{2+\a}. 
\end{align*}
If $\|z\|\le\|(0,y,0)\|$, then combining the above expression with the assumption and \eqref{gaining2} implies \eqref{toprove} with $\eta=\frac{1}{2}$. 
In particular, if $k\in\N$ such that $\|z\|<\|2^{k}(0,y,0)\|$, then we have 
\begin{equation}\label{toprove1}
2^{-k}|\d_{2^ky}g(z_0\circ z)-\d_{2^ky}g_{z}(z_0)| 
\lesssim 2^{-k}M\|(0,2^ky,0)\|^3 \|z\|^\eta
= M\|(0,y,0)\|^3 \|z\|^\eta. 
\end{equation}
Now if $\|z\|\ge\|(0,y,0)\|$, 
applying \eqref{gaining1} at points $z_0$ and $z_0\circ z$, with $k\in\N$ such that $\|2^{k-1}(0,y,0)\|\le\|z\|<\|2^{k}(0,y,0)\|$, yields that 
\begin{align}
|\d_yg(z_0) - 2^{-k}\d_{2^ky}g(z_0)| 
\le 4M\|(0,y,0)\|^3\|z\|^{1+\a}, \label{toprove2}\\
|\d_yg(z_0\circ z) - 2^{-k}\d_{2^ky}g(z_0\circ z)| 
\le 4M \|(0,y,0)\|^3\|z\|^{1+\a}.\label{toprove3}
\end{align}
Summing up \eqref{toprove1}, \eqref{toprove2} and \eqref{toprove3}, we arrive at \eqref{toprove}. 
\end{proof}

Following the lines of the above proof and taking into account that $\|g\|_{\Ck^{2+\a}(Q_2)}\le M$, one is also able to prove the following result. 
\begin{lemma}\label{incrementi-IS}
If $g\in \Ck^{2+\a}(Q_2)$ with $\a\in(0,1)$, then for any $y \in B_1$, we have 
\begin{equation*}
\|\delta_yg\|_{\Ck^\a(Q_1)} \lesssim \|g\|_{\Ck^{2+\a}(Q_2)}\|(0,y,0)\|^{2}. 
\end{equation*}
\end{lemma}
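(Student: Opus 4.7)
The plan is to bound $\|\delta_y g\|_{\Ck^\a(Q_1)}$ by combining a sharp $\Ck^0$ estimate with the trivial $\Ck^{2+\a}$ estimate, in the spirit of the proof of Lemma~\ref{mio-lemma} but with the roles of the two norms exchanged. Write $\rho := \|(0,y,0)\|$ and $M := \|g\|_{\Ck^{2+\a}(Q_2)}$.

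First I would establish the sharp pointwise bound $\|\delta_y g\|_{\Ck^0(Q_1)} \lesssim M\rho^{2+\a}$. For any $z_0 \in Q_1$, the Taylor polynomial $p_{z_0}$ supplied by Definition~\ref{holdercontinuous} has kinetic degree $< 2+\a < 3$, so by Remark~\ref{holderremark} it involves only monomials in $t$ and $v$ and contains no $x$-variable; evaluating the expansion at $z = (0,y,0)$ therefore gives $p_{z_0}(0,y,0) = g(z_0)$. The Hölder bound $|g(z_0 \circ z) - p_{z_0}(z)| \le M\|z\|^{2+\a}$ then yields $|\delta_y g(z_0)| \le M\rho^{2+\a}$. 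Meanwhile, since $\delta_y g = g \circ T_y - g$ where $T_y$ is the Galilean translation by $(0,y,0)$, the left-invariance of the kinetic Hölder norm gives the trivial side $[\delta_y g]_{\Ck^{2+\a}(Q_1)} \le 2M$.

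Next I would apply the Taylor expansion of $\delta_y g$ at each $z_0 \in Q_1$,
\[
\delta_y g(z_0 \circ z) - \delta_y g(z_0) = c_1 t + c_2 \cdot v + \tfrac12 c_3(v,v) + R(z), \qquad |R(z)| \lesssim M\|z\|^{2+\a},
\]
and bound each coefficient by isolating it through test evaluations (taking $z = (0,0,\pm\eta e_i)$ and adding/subtracting to separate $c_2$ from $c_3$, and $z = (t,0,0)$ for $c_1$), then optimizing in the test parameter to interpolate between the small $\Ck^0$ bound $M\rho^{2+\a}$ and the $O(M)$ $\Ck^{2+\a}$ bound. This interpolation is exactly the hand-rolled version of the corresponding one in the proof of Lemma~\ref{mio-lemma} and yields $|c_1|, |c_3| \lesssim M\rho^\a$ and $|c_2| \lesssim M\rho^{1+\a}$, hence
\[
|\delta_y g(z_0 \circ z) - \delta_y g(z_0)| \lesssim M\bigl(\rho^\a\|z\|^2 + \rho^{1+\a}\|z\| + \|z\|^{2+\a}\bigr).
\]

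Finally a two-case analysis on $\|z\|$ versus $\rho$ closes the argument. When $\|z\| \le \rho$, each of the three summands above is bounded by $M\rho^2\|z\|^\a$ via the elementary estimates $\|z\|^{2-\a} \le \rho^{2-\a}$ and $\|z\|^{1-\a} \le \rho^{1-\a}$. When $\|z\| \ge \rho$, the $\Ck^0$ bound from the first step directly gives $|\delta_y g(z_0 \circ z) - \delta_y g(z_0)| \le 2M\rho^{2+\a} = 2M\rho^2 \rho^\a \le 2M\rho^2\|z\|^\a$. Combined with the $\Ck^0$ bound itself (which is $\le M\rho^{2+\a} \le M\rho^2$ since $\rho \le 1$), this concludes $\|\delta_y g\|_{\Ck^\a(Q_1)} \lesssim M\rho^2$. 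The main obstacle is the coefficient interpolation in the third paragraph: the natural $\Ck^\a$-continuity bound $|c_i(z_0 \circ (0,y,0)) - c_i(z_0)| \lesssim M\rho^\a$ on the derivatives of $g$ is by itself too weak to close the case $\|z\| \le \rho$, and it is only the interpolation against the small $\Ck^0$ norm of $\delta_y g$ that produces the extra gain in $\rho$ that one needs.
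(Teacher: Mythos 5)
Your proof is correct and follows essentially the same route the paper intends, namely adapting the strategy of Lemma~\ref{mio-lemma}: the sharp $\Ck^0$ bound comes from the observation that the kinetic Taylor polynomial has degree $<3$ and therefore no $x$-dependence (so $p_{z_0}(0,y,0)=g(z_0)$), and the $\Ck^\a$ seminorm is then controlled by estimating the Taylor coefficients of $\delta_y g$ and splitting into the cases $\|z\|\lessgtr\|(0,y,0)\|$. The one place where you deviate slightly is in how the coefficient bounds $|c_1|,|c_3|\lesssim M\rho^\a$, $|c_2|\lesssim M\rho^{1+\a}$ are produced: you obtain them by hand-rolled interpolation (test evaluations at $z=(\pm\eta^2,0,0)$, $(0,0,\pm\eta e_i)$ optimized at $\eta\approx\rho$, using the $\Ck^0$ and $\Ck^{2+\a}$ bounds on $\delta_y g$), whereas the phrase ``taking into account that $\|g\|_{\Ck^{2+\a}(Q_2)}\le M$'' suggests the more direct route of applying Lemma~\ref{kinetic-degree} to conclude $(\partial_t+v\cdot\nabla_x)g$, $D_v^2 g\in\Ck^\a$ and $\nabla_v g\in\Ck^{1+\a}$, then applying the same ``no $x$-dependence'' observation to the increments of \emph{those} derivative functions (note $\delta_y$ commutes with $\partial_t+v\cdot\nabla_x$, $\nabla_v$, $D_v^2$). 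Both yield the same bounds; yours is more self-contained, the paper's slots more cleanly into the existing framework. A small bookkeeping caveat: your test points $z_0\circ z$ for $z_0\in Q_1$ and $\|z\|\approx\rho$ may leave $Q_1$, and $\delta_y g$ requires $z\circ(0,y,0)$ to stay in $Q_2$, so the intermediate estimates should really be stated on a cylinder like $Q_{3/2}$ rather than $Q_1$; this is routine and does not affect the validity of the argument.
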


	\bibliographystyle{plain}
	\bibliography{AZbib}
\end{document}